\documentclass[reqno]{amsart}
\usepackage{xcolor}
\usepackage{amsthm,amsmath,amssymb,latexsym,soul,cite,mathrsfs}
\usepackage{enumitem}
\pretolerance=10000




\usepackage{color,enumitem,graphicx}
\usepackage[colorlinks=true,urlcolor=blue,
citecolor=red,linkcolor=blue,linktocpage,pdfpagelabels,
bookmarksnumbered,bookmarksopen]{hyperref}
\usepackage[english]{babel}

\usepackage[left=2.7cm,right=2.7cm,top=2.9cm,bottom=2.9cm]{geometry}

\usepackage[hyperpageref]{backref}


\numberwithin{equation}{section}

\newtheorem{theorem}{Theorem}[section]
\theoremstyle{plain}
\newtheorem{theoremletter}{Theorem}

\newtheorem{lemma}[theorem]{Lemma}
\newtheorem{lemmaletter}[theoremletter]{Lemma}
\newtheorem{corollary}[theorem]{Corollary}
\newtheorem{proposition}[theorem]{Proposition}

\newtheorem{remark}[theorem]{Remark}


\newcommand{\dx}{\,\mathrm{d}x}
\newcommand{\dt}{\,\mathrm{d}t}
\newcommand{\dy}{\,\mathrm{d}y}
\newcommand{\dxi}{\,\mathrm{d}\xi}
\newcommand{\dtau}{\,\mathrm{d}\tau}

\newcommand{\dxdy}{\,\mathrm{d}x\mathrm{d}y}

\newcommand{\dive}{\mathrm{div}}

\newcommand{\loca}{\operatorname{loc}}

\usepackage[norefs,nocites]{refcheck}

\title[Rayleigh quotient and general nonlinearities]{Ground states of nonlocal elliptic equations with general nonlinearities via Rayleigh quotient}

\author[D.~Ferraz]{Diego Ferraz}
\address{Department of Mathematics,
	Federal University of Rio Grande do Norte,
	59078-970, Natal-RN, Brazil}
\email{diego.ferraz.br@gmail.com}

\author[Edcarlos D. Silva]{Edcarlos D. Silva}
\address{Department of Mathematics,
	Federal University of Goi\'{a}s,
	74001-970, Goi\'{a}s-GO, Brazil}
\email{edcarlos@ufg.br}

\thanks{Corresponding author: Edcarlos D. Silva. email: edcarlos@ufg.br}
\thanks{The second author was also partially supported by CNPq with grant 309026/2020-2.}
\subjclass[2020]{35A15; 35A25; 35J15; 35J20}
\keywords{Rayleigh quotient, General nonlinearities, Sign changing potentials, Concave nonlinearities}
\begin{document}
	\begin{abstract}
		It is established ground states and multiplicity of solutions for a nonlocal Schr\"{o}dinger equation
		$(-\Delta )^s u + V(x) u = \lambda a(x) |u|^{q-2}u + b(x)f(u)$ in $\mathbb{R}^N,$ $u \in H^s(\mathbb{R}^N),$ where $0<s<\min\{1,N/2\},$ $1<q<2$ and $\lambda >0,$ under general conditions over the measurable functions $a,$ $b$, $V$ and $f.$ The nonlinearity $f$ is superlinear at infinity and at the origin, and does not satisfy any Ambrosetti-Rabinowitz type condition. It is considered that the weights $a$ and $b$ are not necessarily bounded and the potential $V$ can change sign. We obtained a sharp $\lambda^*> 0$ which guarantees the existence of at least two nontrivial solutions for each $\lambda \in (0, \lambda^*)$. Our approach is variational in its nature and is based on the nonlinear Rayleigh quotient method together with some fine estimates. Compactness of the problem is also considered.
	\end{abstract}
	\maketitle

	\section{Introduction}
	Our main concern in this paper is to study ground states for the following nonlocal elliptic problem
	\begin{equation}\label{P}\tag{$P$}
		\left\{
		\begin{aligned}
			&(-\Delta )^s u + V(x) u = \lambda a(x) |u|^{q-2}u + b(x)f(u)\quad \text{in}\quad \mathbb{R}^N,\\
			&u \in H^s(\mathbb{R}^N),
		\end{aligned}
		\right.
	\end{equation}
	involving measurable functions $a,$ $b,$ $V$ and $f$ under general hypotheses to be stated later on, where $0<s<\min\{1,N/2\},$ $\lambda >0$ and $1<q<2.$ Here the fractional Laplacian $(-\Delta )^s u$ is defined by the relation	
	\begin{equation*}
		\mathscr{F} \left((-\Delta )^{s} u \right) (\xi) = \left|\xi\right| ^{2s} \mathscr{F}u (\xi),\quad \xi \in \mathbb{R}^N,
	\end{equation*}
	where  $\mathscr{F}u$ is the Fourier transform of $u,$ i.e.
	\begin{equation*}
		\mathscr{F} u (x)= \frac{1}{(2\pi)^{N/2}} \int _{\mathbb{R}^N} u (\xi) e ^{- i \xi \cdot x} \dxi,\quad x \in \mathbb{R}^N.
	\end{equation*}
	Equivalently, if $u\in \mathscr{S}$ (Schwartz space) the fractional Laplacian of $u$ can be computed by the following singular integral
	\begin{equation*}
		(-\Delta )^{s} u (x) = C(N,s) \lim _{\varepsilon \rightarrow 0 ^{+} } \int _{ \mathbb{R}^N \setminus B_{\varepsilon } (0) } \frac{u(x) - u(y)}{|x-y|^{N + 2s}}\dy,
	\end{equation*}
	for a suitable positive normalizing constant $C(N,s).$ 
	
	Nonlocal elliptic problems involving the fractional Laplacian have been widely considered in the last years. In fact, this class of problems arises naturally in several
	branches of mathematical physics. For instance, solutions of \eqref{P} can be seen as stationary states (corresponding to solitary waves) of nonlinear Schr\"{o}dinger equations of the form 
	\begin{equation*}
		i \phi _t - (-\Delta^s )\phi + V_0(x)\phi + g(x,u) =0, \quad \text{in }\mathbb{R}^N,
	\end{equation*}
	for suitable $g$ and $V_0.$ For more about the fractional Laplacian operator and its applications we cite \cite{landkof,hitchhiker}.

	Recently, semilinear elliptic problems have been extensively studied by a variational point of view by considering the so called nonlinear Rayleigh quotient method, see \cite{yavdat2017, yavdat2022,Lean,Kaye,Giovany,carvalho-silva-goulart2021}. In \cite{yavdat2017} Y. Ilyasov 
	develops a survey involving the theory of Rayleigh’s quotient for nonlinear problems. Roughly speaking, the author shows that this method can be applied when it is not possible to use the Nehari method directly. For instance, denoting $J_\lambda$ the related energy functional with respect to \eqref{P}, it is crucial in the Nehari manifold method that $t \mapsto J_\lambda (tu)$ has a unique critical point. This is not the case for $J_\lambda,$ where in fact it has two distinct critical points (see Proposition \ref{compar}), under our conditions.
	
	In \cite{Kaye}, K. Silva studies an abstract equation in a reflexive Banach space, depending on a real parameter $\lambda,$ where this equation is composed by suitable homogeneous operators. By analyzing the Nehari sets, and applying the nonlinear Rayleigh quotient method \cite{yavdat2017}, he proves a general bifurcation result. On the other hand, in \cite{Lean}, M. C. Carvalho et al. studied the following problem
	\begin{equation*}
		\left\{
		\begin{aligned}
			&-\Delta_p u = |u|^{\gamma -2}u + \mu |u|^{\alpha -2 } u -\lambda |u|^{q-2} u &\text{in }& \Omega,\\
			&u = 0 &\text{on }& \partial \Omega,
		\end{aligned}
		\right.
	\end{equation*}
	where $\Omega$ is a smooth bounded domain in $\mathbb{R}^N,$ $N \geq 1,$ $\Delta_p u = \dive (|\nabla u|^{p-2}\nabla u) $ is the $p$--Laplacian operator, under suitable assumptions on the parameters $q,$ $\alpha,$ $p,$ $\gamma,$ $\lambda $ and $\mu.$ The authors discuss multiplicity of positive solutions leading to the occurrence of an $S$-shaped bifurcation curve. They deal with relatively unexplored cases by using the nonlinear generalized Rayleigh quotient method, and they find a range of parameters where the equation may have distinct branches of solutions. For further results using the nonlinear Rayleigh quotient method we refer the reader to \cite{Faraci,Carvalho}. 
	
	It is worth to mention the important fact that in all above works the corresponding energy functional possesses a homogeneity in some degree on each of its terms, which appears to be a fundamental condition in order to apply variational arguments and the approach via Rayleigh quocient method, simultaneously. Based on this, a natural question arises: Is it possible to study existence of ground states for Problem \eqref{P} via Rayleigh quotient method \textit{without using homogeneous conditions} in all of its terms? In this paper we give a positive answer for this question, by introducing a new set of general hypotheses over the nonlinearity $f$ (see \ref{f_um}--\ref{f_quatro} in Sect. \ref{s_Hmainresults}), where $f$ is superlinear at the origin and at infinity, not necessarily homogeneous and not requiring any Ambrosetti-Rabinowitz type condition.
	
	On the other hand, one of the main difficulties to study problems like \eqref{P} by means of variational methods lies in the lack of compactness, which, roughly speaking, originates from the invariance of $\mathbb{R}^N$ with respect to translation and dilation and, analytically, appears because of noncompactness of the corresponding Sobolev embedding. Usually one can suppose that $V$ is bounded from below by a positive constant $V_0 >0$ and that $V$ is coercive in the Bartsch-Wang sense \cite{BW} to ensure that the natural related Sobolev space $X$ for the problem \eqref{P} is compact embedded into a suitable Lebesgue space $L$. See also \cite{chen-gao2018,zhang-tang-chen2021} and different conditions in \cite{Felmer,Secchi,Bisci,sirakov2000}. Differently of the above cited works, and inspired by \cite{sirakov2000,desouza2020}, we consider a changing sign potential $V$ that is only bounded from below by a negative constant. The potential $V$ is assumed in a such way that together with another hypotheses, ensures compactness of the embedding of $X$ in $L$ (see \ref{V_sirakov1}--\ref{V_sirakov3} in Sect. \ref{s_Hmainresults}). Nevertheless, an other novelty of our work is that we also consider the weights $a$ and $b,$ present in the right-hand side of Eq. \eqref{P}, not necessarily bounded. In fact, notice that the term $\lambda a(x) |u|^{q-2}u$ introduces some additional difficulties in order to control the energy functional of problem \eqref{P} and the associated Rayleigh quotient. The first one is to show how $\int_{\mathbb{R}^N} a(x) |u|^{q-2}u \dx$ is finite due to the fact that $q \in (1, 2)$. The second one is to ensure that the energy functional for problem \eqref{P} and the associated Rayleigh quotient are well defined and are in $C^1$ class. 
	
	In our work we prove the existence of a sharp $\lambda^*> 0,$ in the sense that if $\lambda\geq \lambda ^\ast,$ then the argument involving minimization over the Nehari manifold is in general, not suitable anymore, since one cannot apply the Lagrange multiplier theorem in a direct way (for more details see Sect. \ref{s_main} or \cite{yavdat2017}). We also prove existence of at least one ground state of \eqref{P} provided $0<\lambda<\lambda ^\ast$ (Theorem \ref{th1}). Moreover, the existence of another extremal value $0<\lambda _\ast<\lambda ^\ast$ is obtained in order to characterize a second solution (not necessarily ground state), for suitable value of $0<\lambda<\lambda ^\ast$ (Theorem \ref{th2}). To the best of our knowledge, the present work is the first one that considers an approach to a class of elliptic problems via the nonlinear Rayleigh quotient and the Nehari method where $f$ is not a powerlike function in the presence of a changing sign potential $V$ with unbounded weights $a$ and $b.$

	\subsection{Hypotheses}\label{s_Hmainresults} Initially, inspired in part by \cite{sirakov2000,BW}, we consider the following assumptions on $V$:
	
	\begin{enumerate}[label=($V_1$),ref=$(V_1)$]
		\item \label{V_sirakov1} There exists $B\geq 0$ such that
		\begin{equation*}
			V(x) \geq - B,\quad \text{almost everywhere (a.e.)} \ x \in \mathbb{R}^N.
		\end{equation*}
	\end{enumerate}
	\begin{enumerate}[label=($V_2$),ref=$(V_2)$]
		\item\label{V_sirakov2} $V\in L^\infty  _{\loca}(\mathbb{R}^N)$ and  
		\begin{equation*}
			\inf \left\{   \int _{\mathbb{R}^N} |\xi |^{2s} |\mathscr{F} u |^2 \dxi +\int _{\mathbb{R}^N}  V(x)|u|^2 \dx  : u \in C^\infty _0 (\mathbb{R}^N) \text{ and }\| u \|_2 =1 \right\}>0.
		\end{equation*}
	\end{enumerate}
	Let $\Omega \subset \mathbb{R}^N$ be an open set with smooth boundary and $2\leq \theta < 2^\ast _s=2N/(N-2s).$ Define
	\begin{equation*}
		\nu_{\theta }(\Omega ) := \inf \left\{  \int _{\mathbb{R}^N} |\xi |^{2s} |\mathscr{F} u |^2 \dxi  + \int _{\mathbb{R}^N} V(x) |u|^2 \dx : u \in \mathcal{M}_\theta (\Omega )\right\},
	\end{equation*}
	where 
	\begin{equation*}
		\mathcal{M}_\theta (\Omega ) = \left\{ u \in H^s(\mathbb{R}^N ): u =0 \text{ in }\mathbb{R}^N \setminus \Omega \text{ and }\| u \|_{\theta} = 1\right\},
	\end{equation*}
	with $\nu(\emptyset) := + \infty. $ 
	\begin{enumerate}[label=($V_3$),ref=$(V_3)$]
		\item \label{V_sirakov3} There is $2\leq \theta_0 < 2^\ast _s $ such that
		\begin{equation*}
			\lim_{R \rightarrow  \infty}\nu_{\theta _0}(\mathbb{R}^N \setminus \overline{B}_R) = + \infty.	
		\end{equation*}
	\end{enumerate}
	For the function $b$ we shall consider the following hypothesis:
	\begin{enumerate}[label=($B_1$),ref=$(B_1)$]
		\item \label{B_weight} $b \in L^\infty _{\loca}(\mathbb{R}^N),$ $b(x)\geq 1$ a.e. in $\mathbb{R}^N$ and there are $C_0,$ $R_0>0$ such that
		\begin{equation*}
			b(x)  \leq C_0 \left( 1+ (V^+ (x)) ^{1/\alpha} \right),\quad \forall\ |x|\geq R_0\text{ and }\alpha >1,
		\end{equation*}
		where $V^+(x) = \max\{ 0, V(x)  \}.$
	\end{enumerate}
	We assume that $f \in C^1 (\mathbb{R}, \mathbb{R})$ and the following conditions:
	\begin{enumerate}[label=($f_1$),ref=$(f_1)$]
		\item\label{f_um} There are $C>0$ and $p \in (2,2^\ast _s)$ such that 
		\begin{equation*}
			|f(t)| \leq C(1+ |t|^{p-1})\quad \text{and}\quad |f'(t)| \leq C(1+ |t|^{p-2}),\quad  \forall \ t \in \mathbb{R}.
		\end{equation*}
	\end{enumerate}
	\begin{enumerate}[label=($f_2$),ref=$(f_2)$]
		\item\label{f_dois} $\displaystyle f(t) t \geq  0,$ for $t \neq 0,$ $\lim_{t \rightarrow 0 }f(t)/t = 0$ and $\lim_{|t| \rightarrow \infty }f(t)/t = + \infty.$
	\end{enumerate}	
	\begin{enumerate}[label=($f_3$),ref=$(f_3)$]
		\item\label{f_tres} The function $t \mapsto f'(t) + (1-q) f(t)/t$ is increasing for $t>0$ and decreasing for $t<0.$
	\end{enumerate}	
	\begin{enumerate}[label=($f_4$),ref=$(f_4)$]
		\item\label{f_quatro} It folds $f'(t)t^2- f(t)t > 0,$ for all $t \in \mathbb{R} \setminus \{0\}.$
	\end{enumerate}
	Throughout our work we suppose the following hypothesis for the function $a$:
	\begin{enumerate}[label=($A_1$),ref=$(A_1)$]
		\item \label{A1_weight} $a\in L^{\alpha_0} (\mathbb{R}^N),$ with $\alpha_0 = (p/q)' = p/(p-q).$ Moreover, $a(x) \geq 0$ a.e. in $\mathbb{R}^N$ and $a\not \equiv 0.$
	\end{enumerate}

	\subsection{Main Results}
	Next we suppose that the above conditions \ref{V_sirakov1}--\ref{V_sirakov3}, \ref{B_weight}, \ref{f_um}--\ref{f_quatro} and \ref{A1_weight} hold.
	\begin{theorem}\label{th1}
		There exists $\lambda^* \in (0, \infty)$ such the Problem \eqref{P} admits at least one ground state solution $u_\lambda \in X,$ for any $0<\lambda <\lambda^*$. Furthermore, $J_\lambda(u_\lambda) < 0$ and $J''_\lambda(u_\lambda)(u_\lambda,u_\lambda ) > 0.$
	\end{theorem}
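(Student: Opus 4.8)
The plan is to obtain $u_\lambda$ as a minimizer of the energy functional associated with \eqref{P},
\[
J_\lambda(u)=\frac{1}{2}\Big(\int_{\mathbb{R}^N}|\xi|^{2s}|\mathscr{F}u|^2\dxi+\int_{\mathbb{R}^N}V(x)|u|^2\dx\Big)-\frac{\lambda}{q}\int_{\mathbb{R}^N}a(x)|u|^q\dx-\int_{\mathbb{R}^N}b(x)F(u)\dx,
\]
where $F(t)=\int_0^t f(\tau)\dtau$, restricted to the concave branch of the Nehari manifold. First I would fix the variational framework, taking $X$ to be $H^s(\mathbb{R}^N)$ equipped with the norm induced by the quadratic form $Q(u)=\int_{\mathbb{R}^N}|\xi|^{2s}|\mathscr{F}u|^2\dxi+\int_{\mathbb{R}^N}V(x)|u|^2\dx$, which by \ref{V_sirakov1}--\ref{V_sirakov2} is equivalent to the standard $H^s$-norm and makes $Q$ coercive. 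Using the growth bounds \ref{f_um}, Hölder's inequality with exponent $\alpha_0=(p/q)'$ from \ref{A1_weight}, the local boundedness and growth of $b$ in \ref{B_weight}, and the embedding $X\hookrightarrow L^p(\mathbb{R}^N)$, one checks that $\int_{\mathbb{R}^N}a|u|^q\dx<\infty$ and that $J_\lambda\in C^2(X,\mathbb{R})$.

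Next I would study the fibering maps $\gamma_u(t)=J_\lambda(tu)$ and split the Nehari set $\mathcal{N}_\lambda=\{u\in X\setminus\{0\}:\gamma_u'(1)=0\}$ into $\mathcal{N}^{\pm}_\lambda$ and $\mathcal{N}^0_\lambda$ according to the sign of $\gamma_u''(1)=J_\lambda''(u)(u,u)$. Solving $\gamma_u'(t)=0$ for $\lambda$ produces the nonlinear Rayleigh quotient
\[
\phi_u(t)=\frac{t\,Q(u)-\int_{\mathbb{R}^N}b(x)f(tu)u\dx}{t^{q-1}\int_{\mathbb{R}^N}a(x)|u|^q\dx}.
\]
By \ref{f_dois} one has $\phi_u(0^+)=0$ and $\phi_u(t)\to-\infty$ as $t\to\infty$, while the monotonicity in \ref{f_tres} (together with \ref{f_quatro}) forces $\phi_u$ to possess a unique interior maximum; hence each fiber carries exactly two critical points $0<t_1(u)<t_2(u)$ of $\gamma_u$ — a local minimum and a local maximum — precisely when $\lambda<\max_{t>0}\phi_u(t)$, which is the content of Proposition \ref{compar}. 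I would then set the sharp threshold $\lambda^*=\inf_{u\in X\setminus\{0\}}\max_{t>0}\phi_u(t)$ and prove $0<\lambda^*<\infty$, so that for every $0<\lambda<\lambda^*$ one has $\mathcal{N}^0_\lambda=\emptyset$ and $\mathcal{N}_\lambda=\mathcal{N}^+_\lambda\cup\mathcal{N}^-_\lambda$, with $\mathcal{N}^+_\lambda$ consisting of the local minima $t_1(u)$.

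The heart of the argument is the constrained minimization $c^+_\lambda:=\inf_{\mathcal{N}^+_\lambda}J_\lambda$. Since along each fiber the $\mathcal{N}^+$-point is the interior local minimum of $\gamma_u$, which lies strictly below $\gamma_u(0)=0$, we get $J_\lambda<0$ on $\mathcal{N}^+_\lambda$ and thus $c^+_\lambda<0$. On $\mathcal{N}_\lambda$ the functional rewrites as
\[
J_\lambda(u)=\lambda\Big(\tfrac{1}{2}-\tfrac{1}{q}\Big)\int_{\mathbb{R}^N}a(x)|u|^q\dx+\int_{\mathbb{R}^N}b(x)\Big(\tfrac{1}{2}f(u)u-F(u)\Big)\dx,
\]
where $\tfrac{1}{2}f(t)t-F(t)\geq 0$ is extracted by integrating \ref{f_quatro}; this identity, combined with the superlinearity \ref{f_dois}, yields boundedness of any minimizing sequence $\{u_n\}\subset\mathcal{N}^+_\lambda$ in $X$. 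The decisive and hardest step is compactness: the embedding $H^s(\mathbb{R}^N)\hookrightarrow L^p(\mathbb{R}^N)$ is not compact on the whole space, and this is exactly where \ref{V_sirakov3} and \ref{B_weight} enter. Using the coercivity-at-infinity encoded by $\nu_{\theta_0}(\mathbb{R}^N\setminus\overline{B}_R)\to+\infty$, I expect to prove that $X$ embeds compactly into the weighted Lebesgue spaces controlling the lower-order terms, so that, up to a subsequence, $u_n\rightharpoonup u_\lambda$ in $X$ with $\int_{\mathbb{R}^N}a|u_n|^q\dx\to\int_{\mathbb{R}^N}a|u_\lambda|^q\dx$ and $\int_{\mathbb{R}^N}bF(u_n)\dx\to\int_{\mathbb{R}^N}bF(u_\lambda)\dx$. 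This forces $u_\lambda\neq 0$ (otherwise $c^+_\lambda\geq 0$), then $u_n\to u_\lambda$ strongly, $u_\lambda\in\mathcal{N}^+_\lambda$ and $J_\lambda(u_\lambda)=c^+_\lambda$.

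Finally I would upgrade the constrained minimizer to a genuine weak solution. A Lagrange-multiplier argument for the constraint $G(u):=J_\lambda'(u)u=0$ gives $J_\lambda'(u_\lambda)=\mu\,G'(u_\lambda)$; testing with $u_\lambda$ and using that $G'(u_\lambda)u_\lambda=J_\lambda''(u_\lambda)(u_\lambda,u_\lambda)\neq 0$ on $\mathcal{N}_\lambda$ (since $\mathcal{N}^0_\lambda=\emptyset$ for $\lambda<\lambda^*$) forces $\mu=0$, whence $J_\lambda'(u_\lambda)=0$ and $u_\lambda$ solves \eqref{P}. Because along each fiber the $\mathcal{N}^+$-critical point has strictly smaller energy than the $\mathcal{N}^-$-one, we have $c^+_\lambda=\inf_{\mathcal{N}_\lambda}J_\lambda$, and since every nontrivial solution lies on $\mathcal{N}_\lambda$, the minimizer $u_\lambda$ is a ground state; the properties $J_\lambda(u_\lambda)<0$ and $J_\lambda''(u_\lambda)(u_\lambda,u_\lambda)>0$ are then immediate from $c^+_\lambda<0$ and $u_\lambda\in\mathcal{N}^+_\lambda$. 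I expect the compactness step of the third paragraph to be the main obstacle; moreover, the non-homogeneity of $f$ rules out the usual scaling reductions of the Rayleigh quotient, so the uniqueness of the maximum of $\phi_u$ and the estimates for $\lambda^*$ must be derived directly from \ref{f_tres}--\ref{f_quatro}.
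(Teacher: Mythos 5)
Your skeleton matches the paper's: the Rayleigh quotient $\phi_u(t)=R_n(tu)$, the threshold $\lambda^*=\inf_{u}\sup_{t>0}R_n(tu)$, emptiness of $\mathcal{N}^0_\lambda$ below $\lambda^*$, minimization over $\mathcal{N}^+_\lambda$, and the Lagrange multiplier argument forcing $\mu=0$. But two of your steps are genuinely flawed, not just unelaborated. First, the functional setting: under \ref{V_sirakov1}--\ref{V_sirakov2} the norm $\|\cdot\|_V$ is \emph{not} equivalent to the $H^s$-norm. Lemma \ref{l_imersao} gives only $\|u\|\leq C\|u\|_V$, and no reverse inequality can hold because $V^+$ may be unbounded — this is precisely what makes \ref{V_sirakov3} and \ref{B_weight} meaningful. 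Your claim is self-defeating: if $X$ were $H^s(\mathbb{R}^N)$ with an equivalent norm, the compact embeddings into $L^\theta(\mathbb{R}^N)$ and the weighted spaces that you invoke later could not hold, by translation invariance. The correct space is $X=H^s_V(\mathbb{R}^N)$, the completion of $C^\infty_0(\mathbb{R}^N)$ under $\|\cdot\|_V$, in general a proper subspace of $H^s(\mathbb{R}^N)$ (Proposition \ref{p_imersaopeso} is where compactness is actually produced). Relatedly, $J_\lambda$ is \emph{not} $C^2$: since $1<q<2$, the map $u\mapsto\int a|u|^q\dx$ has no second derivative. The paper stresses that $J_\lambda$ is only $C^1$ and that $J''_\lambda(u)(u,u)$ makes sense only as a directional quantity; your Lagrange argument survives because the constraint $G(u)=J'_\lambda(u)u$ is $C^1$ under \ref{f_um}, but that is what must be said, rather than deducing it from a false $C^2$ claim.

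Second, and more substantively, boundedness of minimizing sequences does not follow from your identity $J_\lambda(u)=\lambda(\tfrac12-\tfrac1q)\|u\|_{q,a}^q+\int b(\tfrac12 f(u)u-F(u))\dx$ ``combined with superlinearity.'' Since $\tfrac12-\tfrac1q<0$ and $\tfrac12 f(t)t-F(t)\geq 0$, an upper bound on $J_\lambda(u_n)$ gives no control of $\|u_n\|_V$ unless the nonnegative term grows superquadratically along the sequence, and in the absence of an Ambrosetti--Rabinowitz condition there is no pointwise lower bound producing that. This is exactly where the paper works hardest: Proposition \ref{coercive} argues by contradiction, normalizes $w_k=v_k/\|v_k\|_V$, and splits into the cases $w\neq 0$ and $w=0$. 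The first case needs Fatou's lemma together with Lemma \ref{crucial} (a consequence of \ref{f_tres}): $t\mapsto f(t)/t-qF(t)/t^2$ is monotone and blows up at infinity. The second case cannot be handled by any such integral estimate — all renormalized integral terms vanish there — and is instead resolved by the fibering structure: for $t=d/\|v_k\|_V$ one has $J_\lambda(tv_k)\to d^2/2$, while Proposition \ref{sinal}, Remark \ref{obs} and Proposition \ref{importante_c+} bound $J_\lambda(tv_k)$ above by $\max\{0,J_\lambda(v_k)\}$, giving a contradiction for $d$ large. Neither ingredient appears in your sketch. The same machinery is also what turns your assertion ``then $u_n\to u_\lambda$ strongly'' into a proof: the paper's Proposition \ref{N+} excludes $\|u_\lambda\|_V<\liminf_k\|u_k\|_V$ via the ordered projections $t^{n,+},t^{n,-}$ and the monotonicity of $t\mapsto J_\lambda(tu)$ between them, not via convergence of the subordinate terms alone. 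So the architecture is right, but the two load-bearing compactness steps — coercivity on the Nehari set and strong convergence of minimizing sequences — are missing the ideas that make them work under hypotheses \ref{f_um}--\ref{f_quatro}.
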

	
	\begin{theorem}\label{th2}
		There exists $0 < \lambda_* <\lambda^* <\infty$ such the Problem \eqref{P} admits at least one bound state solution $v_\lambda \in X,$ for any $0<\lambda <\lambda^*.$ Furthermore, $J''_\lambda(v_\lambda)(v_\lambda,v_\lambda) < 0$ and
		\begin{enumerate}[label=\roman*)]
			\item $J_\lambda(v_\lambda) > 0,$ whenever $\lambda \in (0, \lambda_*);$
			\item $J_\lambda(v_\lambda) = 0,$ whenever $\lambda =\lambda_*;$
			\item $J_\lambda(v_\lambda ) < 0,$ whenever $\lambda \in (\lambda_*, \lambda^*);$
		\end{enumerate}
	\end{theorem}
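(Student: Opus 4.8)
The strategy is to obtain $v_\lambda$ as a minimizer of $J_\lambda$ over the ``upper'' Nehari component and then to control the sign of its energy through a second extremal parameter extracted from the energy Rayleigh quotient. For $u\in X\setminus\{0\}$ consider the fibering map $\gamma_u(t)=J_\lambda(tu)$, $t>0$. By Proposition \ref{compar}, for each $0<\lambda<\lambda^\ast$ the map $\gamma_u$ has exactly two positive critical points $t^+_\lambda(u)<t^-_\lambda(u)$, with $\gamma_u''(t^+_\lambda(u))>0$ (a local minimum) and $\gamma_u''(t^-_\lambda(u))<0$ (a local maximum). These determine
\begin{equation*}
\mathcal{N}_\lambda=\{u\in X\setminus\{0\}: J_\lambda'(u)u=0\},\qquad \mathcal{N}^{-}_\lambda=\{u\in\mathcal{N}_\lambda: J_\lambda''(u)(u,u)<0\},
\end{equation*}
and the projection $u\mapsto t^-_\lambda(u)\,u$ sends every ray onto $\mathcal{N}^-_\lambda$, so that $m^-_\lambda:=\inf_{\mathcal{N}^-_\lambda}J_\lambda=\inf_{u\neq0}\gamma_u(t^-_\lambda(u))$. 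Since the ground state of Theorem \ref{th1} lies on the complementary component $\{J_\lambda''(u)(u,u)>0\}$, a minimizer realizing $m^-_\lambda$ is automatically distinct from $u_\lambda$.

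First I would show $m^-_\lambda$ is attained. Taking $(u_n)\subset\mathcal{N}^-_\lambda$ with $J_\lambda(u_n)\to m^-_\lambda$, the Nehari identity together with the superlinearity coming from \ref{f_dois} and \ref{f_quatro} (which yields $F(t)\leq \tfrac12 f(t)t$) forces $\|u_n\|$ to be bounded: were $\|u_n\|\to\infty$, the superlinear term would dominate the concave one and give $J_\lambda(u_n)\to+\infty$, a contradiction. Hence, up to a subsequence, $u_n\rightharpoonup v_\lambda$ in $X$; the compact embedding of $X$ furnished by \ref{V_sirakov1}--\ref{V_sirakov3}, combined with the integrability \ref{A1_weight} and the growth \ref{f_um}, lets me pass to the limit in $\int a|u_n|^q\dx$ and $\int b f(u_n)u_n\dx$ and to deduce $v_\lambda\neq0$. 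Projecting onto $\mathcal{N}^-_\lambda$ and using lower semicontinuity I obtain $v_\lambda\in\mathcal{N}^-_\lambda$ with $J_\lambda(v_\lambda)=m^-_\lambda$. Finally, $\mathcal{N}_\lambda$ is a natural constraint: from $J_\lambda'(v_\lambda)=\mu\,G'(v_\lambda)$ with $G(u)=J_\lambda'(u)u$, testing against $v_\lambda$ gives $0=\mu\,J_\lambda''(v_\lambda)(v_\lambda,v_\lambda)$, whence $\mu=0$ because $J_\lambda''(v_\lambda)(v_\lambda,v_\lambda)<0$; thus $v_\lambda$ solves \eqref{P} and $J_\lambda''(v_\lambda)(v_\lambda,v_\lambda)<0$.

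To pin down the sign of the energy I would use the energy Rayleigh quotient obtained by solving $J_\lambda(tu)=0$ for $\lambda$, namely
\begin{equation*}
R_e(t,u)=\frac{q}{t^{q}\int a|u|^q\dx}\left(\frac{t^2}{2}\|u\|^2-\int b\,F(tu)\dx\right),
\end{equation*}
which expresses $J_\lambda(v_\lambda)$ through the value of $\gamma_u$ at $t^-_\lambda$ and shows that its sign is governed by the single threshold $\lambda_\ast:=\sup\{\lambda\in(0,\lambda^\ast):m^-_\lambda>0\}$. The decisive ingredient is strict monotonicity of $\lambda\mapsto m^-_\lambda$: since $t^-_\lambda(u)$ is a critical point of $\gamma_u$, the envelope identity gives $\tfrac{d}{d\lambda}J_\lambda\big(t^-_\lambda(u)u\big)=-\tfrac{1}{q}\big(t^-_\lambda(u)\big)^q\int a|u|^q\dx<0$, so each $\lambda\mapsto J_\lambda(t^-_\lambda(u)u)$ is strictly decreasing and hence so is their infimum $m^-_\lambda$. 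Together with $m^-_\lambda>0$ for $\lambda$ near $0$ and $m^-_\lambda<0$ as $\lambda\uparrow\lambda^\ast$ (there $t^-_\lambda$ merges with $t^+_\lambda$ and $\gamma_u(t^-_\lambda)\to\gamma_u(t^+_\lambda)<0$), continuity yields a unique $0<\lambda_\ast<\lambda^\ast$ with $m^-_{\lambda_\ast}=0$, delivering exactly the three cases (i)--(iii).

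The hard part will be the compactness step above and, entangled with it, keeping the weak limit off the degenerate set $\mathcal{N}^0_\lambda=\{u\in\mathcal{N}_\lambda: J_\lambda''(u)(u,u)=0\}$. Because $f$ satisfies no Ambrosetti--Rabinowitz condition and is not homogeneous, both the a priori bound on $\|u_n\|$ and the recovery of strong convergence must be drawn from the structural hypotheses \ref{f_tres}--\ref{f_quatro} and from fine estimates on the fibering maps rather than from scaling identities; in particular one needs $b\,F(u_n)\to b\,F(v_\lambda)$ in $L^1$, for which the controlled growth of $b$ relative to $V^+$ in \ref{B_weight} is used together with the compact embedding. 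The separation $\dist(\mathcal{N}^-_\lambda,\mathcal{N}^0_\lambda)>0$ for $\lambda<\lambda^\ast$, which is precisely the content of the sharpness of $\lambda^\ast$, is what prevents $t^-_\lambda(v_\lambda)$ from collapsing onto $t^+_\lambda(v_\lambda)$ and thus guarantees that $v_\lambda$ remains in the open component $\mathcal{N}^-_\lambda$.
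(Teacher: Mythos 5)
Your existence argument is the paper's: minimize $J_\lambda$ over $\mathcal{N}^-_\lambda$ (Propositions \ref{coercive} and \ref{N-}) and kill the Lagrange multiplier using $J''_\lambda(v_\lambda)(v_\lambda,v_\lambda)<0$. Where you genuinely depart from the paper is the trichotomy (i)--(iii), and this is where the gap lies. You replace the paper's second Rayleigh quotient machinery --- $\Lambda_e(u)=R_e(t_e(u)u)$, the extremal value $\lambda_*=\inf_{u\neq0}\Lambda_e(u)$ and its attainment (Proposition \ref{ed2}), the strict inequality $\lambda_*<\lambda^*$ (Proposition \ref{p_desiboa}), and the fibering comparison $q_n$ vs.\ $q_e$ (Proposition \ref{ed4}), which together give Propositions \ref{sinal} and \ref{p_sinal} --- by strict monotonicity of $\lambda\mapsto m^-_\lambda$ plus an intermediate value argument. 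The monotonicity itself is fine once each $m^-_\lambda$ is attained, and it yields (i) and (iii) for your $\lambda_*=\sup\{\lambda:m^-_\lambda>0\}$. But item (ii) requires $m^-_{\lambda_*}=0$, and the ``continuity'' you invoke is never proved and does not come for free: an infimum of a family of continuous, strictly decreasing functions $\lambda\mapsto J_\lambda(t^{n,-}_\lambda(u)u)$ is only upper semicontinuous (hence left-continuous here) and may jump \emph{downward} across $\lambda_*$, in which case no parameter realizes energy exactly zero. Excluding such a jump needs a compactness argument uniform in $\lambda$ (convergence of minimizers $v_{\lambda_k}$ as $\lambda_k\downarrow\lambda_*$), which you do not supply; the paper avoids it entirely by locating $\lambda_*$ through $\Lambda_e$ and checking (ii) pointwise. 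The endpoints are also asserted rather than proved: for $v\in\mathcal{N}_\lambda$ one only has
\begin{equation*}
J_\lambda(v)=\lambda\Bigl(\tfrac12-\tfrac1q\Bigr)\|v\|^q_{q,a}+\int_{\mathbb{R}^N}b(x)\Bigl(\tfrac12 f(v)v-F(v)\Bigr)\dx,
\end{equation*}
whose first term is negative since $q<2$, so $m^-_\lambda>0$ for small $\lambda$ is not a soft consequence of $F\le\tfrac12 f(t)t$; in the paper it follows precisely from $t_e(u)<t^{n,-}(u)$ and $R_n<R_e$ beyond $t_e$ (Propositions \ref{ed4} and \ref{sinal}\,i)). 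Similarly, your merging argument near $\lambda^*$ gives only $\limsup_{\lambda\uparrow\lambda^*}m^-_\lambda\le 0$ unless one knows $J_{\lambda^*}(t_n(u)u)<0$ strictly, which again rests on $\Lambda_e(u)<\Lambda_n(u)$, i.e.\ on the very comparison you dropped. You do write down $R_e$, but you never establish these comparison facts, and they are the substance of the result.

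A second, smaller gap is the boundedness step. Hypothesis \ref{f_quatro} does give $F(t)<\tfrac12 f(t)t$, but the claim that this plus superlinearity ``forces'' bounded minimizing sequences is not a proof in the absence of an (AR) condition: the displayed identity above shows the superlinear term enters with a favorable sign but with no quantitative lower bound, so $J_\lambda(u_n)\le C$ and $\|u_n\|_V\to\infty$ are not in evident contradiction. The paper's Proposition \ref{coercive} handles this with a two-case argument on $w_k=u_k/\|u_k\|_V$: if $w\neq0$, Fatou's lemma combined with Lemma \ref{crucial} (the divergence $G(t)=f(t)/t-qF(t)/t^2\to+\infty$, a consequence of \ref{f_tres}, not of \ref{f_quatro}) produces the contradiction; if $w=0$, one uses the fibering bound $\max_{t>0}J_\lambda(tv_k)=\max\{0,J_\lambda(v_k)\}$ together with $\lim_k J_\lambda\bigl((d/\|v_k\|_V)v_k\bigr)=d^2/2$ for every $d>0$. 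You correctly flag this as the hard part and point at \ref{f_tres}--\ref{f_quatro}, but the argument itself --- which is the load-bearing step of both Propositions \ref{N-} and \ref{coercive} --- is missing from the proposal.
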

	\begin{corollary}\label{coro}
		Problem \eqref{P} admits at least two nontrivial solutions for each $0 < \lambda < \lambda^*$.
	\end{corollary}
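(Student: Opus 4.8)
The plan is to read off the Corollary directly from Theorems \ref{th1} and \ref{th2}, both of which supply solutions over the \emph{same} parameter range $0<\lambda<\lambda^*$. The only substantive point left to verify is that the two solutions produced are genuinely distinct and both nontrivial; everything else has already been established.

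First I would fix $\lambda\in(0,\lambda^*)$ and apply Theorem \ref{th1} to obtain a ground state $u_\lambda\in X$ with $J''_\lambda(u_\lambda)(u_\lambda,u_\lambda)>0$. Then I would apply Theorem \ref{th2}, valid over the identical range, to obtain a bound state $v_\lambda\in X$ with $J''_\lambda(v_\lambda)(v_\lambda,v_\lambda)<0$. Nontriviality of each is immediate from the strictness of these inequalities: the zero function makes the second variation vanish, so neither $u_\lambda$ nor $v_\lambda$ can equal $0$.

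The decisive observation is that the sign of the second variation along the solution itself separates the two critical points. Indeed, if one had $u_\lambda=v_\lambda$, then the single scalar $J''_\lambda(u_\lambda)(u_\lambda,u_\lambda)$ would be at once strictly positive (by Theorem \ref{th1}) and strictly negative (by Theorem \ref{th2}), which is absurd. Hence $u_\lambda\neq v_\lambda$, and \eqref{P} admits at least two distinct nontrivial solutions for every $0<\lambda<\lambda^*$, as claimed. I do not expect any real obstacle in this step: all of the analytic work—compactness of the embedding of $X$, well-posedness and $C^1$ regularity of the energy functional and its Rayleigh quotient, and the fibering analysis distinguishing the two critical points of $t\mapsto J_\lambda(tu)$—has already been absorbed into the proofs of the two theorems, so the Corollary reduces to the incompatibility of the signs of $J''_\lambda$ at $u_\lambda$ and $v_\lambda$.
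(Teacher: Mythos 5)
Your proposal is correct and is essentially the paper's own argument: the paper deduces the corollary from Theorems \ref{th1} and \ref{th2} together with the fact that $\mathcal{N}_\lambda^- \cap \mathcal{N}_\lambda^+ = \emptyset$, which is precisely your observation that the strict and opposite signs of $J''_\lambda(u_\lambda)(u_\lambda,u_\lambda)$ and $J''_\lambda(v_\lambda)(v_\lambda,v_\lambda)$ force $u_\lambda \neq v_\lambda$ (and rule out the trivial solution).
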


	\subsection{Remarks on the assumptions} Here we give some helpful comments concerning our assumptions.
	\begin{enumerate}[label=\roman*)]
		\item $\nu_\theta (\Omega )$ and $\mathcal{M}_\theta (\Omega)$ in \ref{V_sirakov3} are well defined for open domains with smooth boundaries. Details of this fact can be viewed in \cite{brasco-ariel2019}. Recently, condition \ref{V_sirakov3} appeared in \cite{desouza2020}.
		\item By \ref{f_um} and \ref{f_dois} it is clear that for any $\varepsilon >0$ there is $C_\varepsilon >0$ such that
		\begin{equation}\label{condicao_crescimento}
			|f(t)| \leq \varepsilon |t| + C_\varepsilon |t|^{p-1}\text{ and } |f'(t)| \leq \varepsilon + C_\varepsilon |t|^{p-2},\quad \forall \, t\in \mathbb{R}.
		\end{equation}
		\item Clearly, under our hypotheses, $\lim _{t \rightarrow \infty } f(t)/t =+ \infty,$ implies $\lim _{t \rightarrow \infty}F(t)/t^2 =+ \infty.$
		\item Any sum of power like functions in the following form,
		\begin{equation*}
			f(t) = \sum_{i=1}^{k} |t|^{p_i -2}t, x \in \mathbb{R}^N,\quad t \in \mathbb{R},
		\end{equation*}
		satisfies hypotheses \ref{f_um}--\ref{f_quatro} where $2 < p_1 < p_2 < \dots < p_k < 2^*_s$. Moreover, $f$ is not homogeneous: It does not satisfy $f( \alpha t) \neq \alpha^r f(t)$ for each $r > 0$ and for some $\alpha > 0$. 
		\item Assumption \ref{f_tres} is different than Ambrosetti-Rabinowitz condition (in short $(AR)$ condition). More precisely, a function $f: \mathbb{R} \to \mathbb{R}$ is said to have the $(AR)$ condition, when there exists $\theta > 2$ such that $0 < \theta F(t) \leq f(t)t ,$ for all $t \in \mathbb{R},$ where $F(t) = \int _0 ^t f(\tau ) \dtau.$ Consider the non powerlike function 
		\begin{equation*}
			f(t) = t \ln (1 + |t|), \quad t \in \mathbb{R}.
		\end{equation*}
		In this case $f$ verifies \ref{f_um}--\ref{f_quatro}, for some $p \in (2, 2^*_s),$ but $(AR)$ condition is not satisfied. In fact,
		\begin{equation*}
			F(t) = \frac{t^2}{2} \ln(1 + t) - \frac{1}{4} t^2 - \frac{1}{2}t - \frac{\ln (1 + t)}{2}, \quad  t \geq 0.
		\end{equation*} 
		Moreover, for each $\theta > 2$,
		\begin{equation*}
			f(t)t - \theta F(t) = \frac{2-\theta}{2} t^2 \ln(1 + t) +  \frac{\theta }{2}t^2 + \frac{\theta}{2} t + \frac{\theta}{2} \ln(1 + t) \to - \infty,  \mbox{ as }  t \to \infty.
		\end{equation*}
		On the other hand, we have
		\begin{equation*}
			H(t) = f'(t) + (1 - q) \frac{f(t)}{t} = (2 - q) \ln (1 + |t|) + \frac{|t|}{1 + |t|}.
		\end{equation*}
		In particular, $\lim_{|t| \to \infty} H(t) =  +\infty$. Assumptions \ref{f_um}--\ref{f_quatro} allow us to consider a huge class of nonlinearities. 
		\item Hypothesis \ref{f_tres} implies that the function $G(t) := f(t)/t - q F(t)/t^2$ is increasing for $t > 0,$ and decreasing for $t < 0$, see Proposition \ref{crucial} in Sect. \ref{s_quocient}. In particular, $f(t)t \geq q F(t),$ $ t \in \mathbb{R}.$
	\end{enumerate}
	
	\begin{remark}
		Up to our knowledge, the results presented here are new even for the local case $s=1.$ More precisely, the arguments made to prove Theorems \ref{th1} and \ref{th2} still hold true for the problem
		\begin{equation*}
			\left\{
			\begin{aligned}
				&-\Delta  u + V(x) u = \lambda a(x) |u|^{q-2}u + b(x)f(u)\quad \text{in}\quad \mathbb{R}^N,\\
				&u \in H^1(\mathbb{R}^N),
			\end{aligned}
			\right.
		\end{equation*}
		with obvious modifications in Sections \ref{s_preli} and \ref{s_var}. However, by taking $V=0,$ hypothesis \ref{B_weight} is not sufficient anymore to prove Theorems \ref{th1} and \ref{th2} for the following elliptic problems:
		\begin{equation}\label{achei1}
			\left\{
			\begin{aligned}
				&(-\Delta)^s  u  = \lambda a(x) |u|^{q-2}u + b(x)f(u)\quad \text{in}\quad \Omega,\\
				&u  = 0 \quad \text{on} \quad \mathbb{R}^N \setminus \Omega,
			\end{aligned}
			\right.
		\end{equation}
		and 
		\begin{equation}\label{achei2}
			\left\{
			\begin{aligned}
				&-\Delta  u  = \lambda a(x) |u|^{q-2}u + b(x)f(u)\quad \text{in}\quad \Omega,\\
				&u  = 0 \quad \text{on} \quad \partial \Omega,
			\end{aligned}
			\right.
		\end{equation}
		where $\Omega \subset \mathbb{R}^N$ is a bounded smooth domain with $N>2s$ and $s \in (0, 1)$. In these cases, in order to consider $b$ as an unbounded weight and to carry on our arguments, one have to require at least $b \in L^\beta (\Omega),$ for a suitable $1 < \beta < \infty.$ In the same way, we can consider an unbounded weight $a \in L^{\alpha}(\Omega)$ for some suitable $1 < \alpha < \infty$. For example, we can put $\alpha = 2/(2 -q)$ and $\beta = (2^*_s - \epsilon)/(2^*_s - \epsilon - p)$ where $\epsilon > 0$ is small enough. Under these conditions, by using the  H\"{o}lder inequality, the associated energy functional is in $C^1$ class. Furthermore, by using the same arguments explored in the present work together with the fact that embeddings of the Sobolev spaces $H^s(\Omega)$ into the Lebesgue spaces $L^r(\Omega)$ are compact for each $r \in [1,2^*_s), 2^*_s= 2 N/(N - 2s)$, the conclusions our main results (Theorems \ref{th1}, \ref{th2} and Corollary \ref{coro}) are also true for Problems \eqref{achei1} and \eqref{achei2}. For the local case $s = 1$ we can use the Sobolev space $H^1_0(\Omega)$ instead of $H^s(\Omega)$. In fact, the general function $f$ is not homegeneous and the weights $a$ and $b$ can be unbounded in $\Omega$.
	\end{remark}

	\begin{remark}\label{r_aha}
		Assume \ref{f_um}--\ref{f_quatro}. Define the function $H(t) := f'(t) + (1-q) f(t)/t$ for each $t \neq 0$. We have $\lim_{|t| \to \infty} H(t) =+  \infty$. Indeed,
		\begin{equation*}
			\frac{d}{\dtau} \left[ \frac{f(\tau)}{|\tau|^{q-2} \tau}  \right] = \frac{f'(\tau) + (1 - q)f(\tau)/\tau}{|\tau|^{q-2}\tau} = \frac{H(\tau)}{|\tau|^{q-2}\tau}.
		\end{equation*}
		The last assertion together implies 
		\begin{equation*}
			\frac{f(t)}{t^{q-1}} - \frac{f(t_0)}{t_0^{q-1}} = \int_{t_0}^{t} \frac{H(\tau)}{\tau^{q-1}} \dtau 
		\end{equation*} 
		with $0 < t_0 < t$. In view of hypothesis \ref{f_tres} we infer
		\begin{equation*}
			\frac{f(t)}{t^{q-1}} - \frac{f(t_0)}{t_0^{q-1}} \leq H(t) \int_{t_0}^{t} \frac{1}{\tau^{q-1}} \dtau = H(t) \left( \frac{t^{2-q}}{2-q} - \frac{t_{0}^{2-q}}{2-q} \right)	\leq  H(t) \frac{t^{2-q}}{2-q}.	
		\end{equation*}
		As a consequence,
		\begin{equation*}
			\frac{f(t)}{t} \leq \frac{f(t_0)}{t_0^{q-1}} \frac{1}{t^{2-q}} + \frac{H(t)}{2 - q},\ 0 < t_0 < t.
		\end{equation*}
		Now, by using hypothesis \ref{f_dois} and the last estimate, we deduce that $\lim_{t \to \infty} H(t) =  +\infty.$ Furthermore, by taking $t < t_0 < 0,$ the same argument above leads to $\lim_{t \to -\infty} H(t) =  +\infty.$
	\end{remark}
	\noindent \textbf{Notation:} In this paper, we use the following
	notations:
	\begin{itemize}
		\item  The usual norms in $L^{p}(\mathbb{R}^N)$ are denoted by $\|\cdot \| _p;$
		\item $B_R(x_0)$ is the $N$-ball of radius $R$ and center $x_0;$ $B_R:=B_R(0);$
		\item  $C_i$ denotes (possible different) any positive constant;
		\item $\mathcal{X}_A$ is the characteristic function of the set $A \subset \mathbb{R}^N;$
		\item $A^c =\mathbb{R}^N \setminus A,$ for $A \subset \mathbb{R}^N;$ 
		\item $u^+= \max\{u,0  \}$ and $u^- = \max\{-u,0  \};$
		\item $|A|$ is the Lebesgue measure of the measurable set $A \subset \mathbb{R}^N;$
	\end{itemize}
	\section{Preliminaries}\label{s_preli}
	For $\Omega \subset \mathbb{R}^N$ open set and  $0<s<1,$ the inhomogeneous fractional Sobolev space is defined as
	\begin{equation*}
		H ^s (\Omega)=\left\lbrace u \in L^2 (\Omega) :  \int _{\Omega} \int _{\Omega} \frac{\left| u(x) - u(y) \right|^2}{|x-y|^{N + 2s}}\dxdy <\infty \right\rbrace,
	\end{equation*}
	with the norm
	\begin{equation*}
		\| u \| _{H ^s (\Omega )} ^2 := \int _{\Omega} u^2\dx + \int _{\Omega} \int _{\Omega} \frac{\left| u(x) - u(y) \right|^2}{|x-y|^{N + 2s}}\dxdy .
	\end{equation*}
	We denote $\| u \| = \| u \| _{H ^s ( \mathbb{R}^N )} .$ When $0<s<1,$ by \cite[Proposition 3.4]{hitchhiker},
	\begin{equation*}
		\int _{\mathbb{R}^N} |\xi |^{2s} |\mathscr{F} u |^2 \dxi = \frac{C(N,s)}{2} \int _{\mathbb{R}^N} \int _{\mathbb{R}^N} \frac{ |u(x) - u(y) |^2}{|x-y|^{N + 2s}}\dxdy,\quad\forall \ u \in H^{s} (\mathbb{R}^N),
	\end{equation*}
	for some positive constant $C(N,s).$  Thus, when $\Omega = \mathbb{R}^N,$ we have
	\begin{equation*}
		H ^s (\mathbb{R} ^N) = \left\lbrace u \in L^2 (\mathbb{R}^N) : \ | \cdot | ^s \mathscr{F} u \in L^2 (\mathbb{R} ^N) \right\rbrace =\left\lbrace u \in L^2 (\mathbb{R}^N) : (-\Delta )^{s/2} u \in  L^2 (\mathbb{R} ^N) \right\rbrace.
	\end{equation*}
	Moreover, we have the continuous embedding
	\begin{equation*}
		H^s (\Omega) \hookrightarrow L^p(\Omega),\quad 2 \leq p \leq 2_s ^\ast,\quad\text{for}\quad 0 < s < N/2,
	\end{equation*}
	and the following compact embedding (see \cite[Section 7]{hitchhiker}),
	\begin{equation*}
		H^s(\mathbb{R}^N)\hookrightarrow L _{\loca}^p (\mathbb{R}^N),\quad 1 \leq p < 2_s ^\ast,\quad\text{for}\quad 0 < s < 1.
	\end{equation*}
	Consequently, every bounded sequence in $H^s(\mathbb{R}^N)$ has a subsequence that converges almost everywhere and weakly in $L ^p (\mathbb{R}^N),$ for $2 \leq p < 2_s ^\ast.$
	\section{Variational Settings}\label{s_var}
	We denote the space $H^s _V(\mathbb{R}^N)$ as the completion of $C_0 ^\infty (\mathbb{R}^N)$ with respect to the norm
	\begin{equation*}
		\| u \| _V = \left( \int _{\mathbb{R}^N} |\xi |^{2s} |\mathscr{F} u |^2 \dxi +\int _{\mathbb{R}^N}  V(x)|u|^2 \dx \right) ^{1/2}.
	\end{equation*}
	One can see that $H^s _V(\mathbb{R}^N)$ is a Hilbert space by taking the inner product
	\begin{equation*}
		(u,v)_V = \int _{\mathbb{R}^N} |\xi |^{2s} \mathscr{F} u  \mathscr{F} v \dxi +\int _{\mathbb{R}^N}  V(x)uv \dx,\quad u,\ v \in H^s _V(\mathbb{R}^N).
	\end{equation*}
	\begin{lemmaletter}\label{l_imersao}\cite[Proposition 6.1]{doo-ferraz2018}
		Assuming \ref{V_sirakov1} and \ref{V_sirakov2}, the space $H^s _V(\mathbb{R}^N)$ is well defined and continuously embedded in $H^s(\mathbb{R}^N).$ More precisely, there is $C>0$ such that
		\begin{equation}\label{imersao}
			\| u \| \leq C \| u \|_V,\ \forall \, u \in H^s _V(\mathbb{R}^N).
		\end{equation}
		Moreover,
		\begin{equation*}
			H^s _V(\mathbb{R}^N) \subset \left\{ u \in H^s (\mathbb{R}^N) : \int _{\mathbb{R}^N} V(x) |u|^2 \dx < + \infty \right\}.
		\end{equation*}
	\end{lemmaletter}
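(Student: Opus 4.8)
The plan is to extract a Poincaré-type inequality from \ref{V_sirakov2} and then trade the kinetic term against it using the lower bound \ref{V_sirakov1}. Write $Q(u):=\int_{\mathbb{R}^N}|\xi|^{2s}|\mathscr{F}u|^2\dxi+\int_{\mathbb{R}^N}V(x)|u|^2\dx=\|u\|_V^2$ and let $\Lambda_1>0$ be the infimum in \ref{V_sirakov2}. Since $Q$ is homogeneous of degree two, testing \ref{V_sirakov2} with $u/\|u\|_2$ for an arbitrary $u\in C_0^\infty(\mathbb{R}^N)\setminus\{0\}$ gives the Poincaré inequality
\begin{equation*}
	\|u\|_V^2=Q(u)\geq \Lambda_1\|u\|_2^2,\qquad u\in C_0^\infty(\mathbb{R}^N).
\end{equation*}
In particular $\|u\|_V^2\geq 0$, and $\|u\|_V=0$ forces $\|u\|_2=0$, so $\|\cdot\|_V$ is a genuine norm on $C_0^\infty(\mathbb{R}^N)$; hence the completion $H^s_V(\mathbb{R}^N)$ is well defined.

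For the embedding I would first prove \eqref{imersao} on the dense subspace $C_0^\infty(\mathbb{R}^N)$ and then pass to the completion. By Plancherel and \cite[Proposition 3.4]{hitchhiker} one has $\|u\|^2\leq C_1\big(\|u\|_2^2+\int_{\mathbb{R}^N}|\xi|^{2s}|\mathscr{F}u|^2\dxi\big)$. The $L^2$ term is controlled by Poincaré, $\|u\|_2^2\leq \Lambda_1^{-1}\|u\|_V^2$, while \ref{V_sirakov1} (i.e. $V\geq -B$, so $-\int V|u|^2\leq B\|u\|_2^2$) controls the kinetic term:
\begin{equation*}
	\int_{\mathbb{R}^N}|\xi|^{2s}|\mathscr{F}u|^2\dxi=\|u\|_V^2-\int_{\mathbb{R}^N}V(x)|u|^2\dx\leq \|u\|_V^2+B\|u\|_2^2\leq \left(1+\frac{B}{\Lambda_1}\right)\|u\|_V^2.
\end{equation*}
Combining the two estimates yields $\|u\|\leq C\|u\|_V$ for all $u\in C_0^\infty(\mathbb{R}^N)$, with $C$ depending only on $\Lambda_1$, $B$, $N$ and $s$. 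As this holds on a dense subspace and $H^s(\mathbb{R}^N)$ is complete, $\|\cdot\|_V$-Cauchy sequences are $\|\cdot\|$-Cauchy, so the inclusion extends to a continuous map $\iota:H^s_V(\mathbb{R}^N)\to H^s(\mathbb{R}^N)$ satisfying \eqref{imersao}.

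The hard part will be to show that $\iota$ is \emph{injective}, i.e. that $H^s_V(\mathbb{R}^N)$ genuinely embeds rather than collapsing under the abstract completion: one must rule out a $\|\cdot\|_V$-Cauchy sequence $u_n\in C_0^\infty$ with $u_n\to 0$ in $H^s$ but $\|u_n\|_V\not\to 0$. This is exactly closability on $L^2(\mathbb{R}^N)$ of the quadratic form $Q$, and a naive Fatou argument does not suffice. The clean resolution is that $Q$ is the form of the symmetric operator $(-\Delta)^s+V$ on $C_0^\infty(\mathbb{R}^N)$ (well defined since $V\in L^\infty_{\loca}$), which is semibounded by the Poincaré inequality; forms of semibounded symmetric operators are always closable, and \cite[Proposition 6.1]{doo-ferraz2018} carries this out in the present setting. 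For the remaining inclusion, fix $u\in H^s_V(\mathbb{R}^N)$ with $u_n\to u$ in $\|\cdot\|_V$ (hence in $H^s$, and a.e. along a subsequence) and set $V^-:=\max\{-V,0\}\leq B$. The negative part is harmless, $\int_{\mathbb{R}^N}V^-|u|^2\dx\leq B\|u\|_2^2<\infty$, while
\begin{equation*}
	\int_{\mathbb{R}^N}V^+|u_n|^2\dx=\|u_n\|_V^2-\int_{\mathbb{R}^N}|\xi|^{2s}|\mathscr{F}u_n|^2\dxi+\int_{\mathbb{R}^N}V^-|u_n|^2\dx
\end{equation*}
converges to a finite limit as $n\to\infty$ (each term converges, using $u_n\to u$ in $H^s$ and $L^2$). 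Fatou's lemma then gives $\int_{\mathbb{R}^N}V^+|u|^2\dx<\infty$, so $\int_{\mathbb{R}^N}V(x)|u|^2\dx$ is well defined and finite, completing the proof.
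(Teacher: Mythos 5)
Your proof is correct, and it is worth noting that the paper itself offers no argument to compare against: Lemma \ref{l_imersao} is imported verbatim from \cite[Proposition 6.1]{doo-ferraz2018}, so yours is the only proof on the table. Your Poincar\'e inequality $\|u\|_V^2\geq \Lambda_1\|u\|_2^2$ from \ref{V_sirakov2}, the trade of the kinetic term against \ref{V_sirakov1} to get \eqref{imersao} on $C_0^\infty(\mathbb{R}^N)$, and the final Fatou argument (made legitimate by first showing $\int V^+|u_n|^2\dx$ converges, since each of the three terms in your decomposition converges) are all sound. Most importantly, you correctly isolated the one point a naive proof would miss: the extension of the inclusion to the completion only yields a continuous map $\iota:H^s_V(\mathbb{R}^N)\to H^s(\mathbb{R}^N)$, and the statement that $H^s_V(\mathbb{R}^N)$ \emph{is} a subspace of $H^s(\mathbb{R}^N)$ requires injectivity of $\iota$, i.e.\ closability of the form $Q$ on $L^2(\mathbb{R}^N)$. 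Your resolution via the Friedrichs theorem is valid: $(-\Delta)^s+V$ maps $C_0^\infty(\mathbb{R}^N)$ into $L^2(\mathbb{R}^N)$ (here $V\in L^\infty_{\loca}(\mathbb{R}^N)$ and compact supports are exactly what is needed), it is symmetric, and it is bounded below in the form sense by the Poincar\'e inequality, so its form is closable and the completion embeds injectively into $L^2(\mathbb{R}^N)$, hence into $H^s(\mathbb{R}^N)$ through which the $L^2$-realization factors.

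Two small remarks. First, invoking \cite[Proposition 6.1]{doo-ferraz2018} inside the closability step is circular in spirit, since that is precisely the source of the lemma; but this is cosmetic, because the Friedrichs closability theorem for semibounded symmetric operators is classical and your appeal to it is self-sufficient as written. Second, an alternative to Friedrichs that stays at the level of forms: the nonnegative forms $u\mapsto\int|\xi|^{2s}|\mathscr{F}u|^2\dxi$ and $u\mapsto\int V^+|u|^2\dx$ are each closable (the first is the restriction of a closed form on $H^s(\mathbb{R}^N)$, the second is the form of multiplication by $V^+$), a sum of nonnegative closable forms is closable, and $V^-\leq B$ is a bounded perturbation controlled by your Poincar\'e inequality; this gives the same injectivity with slightly more elementary ingredients.
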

	\begin{lemma}\label{l_lemaini}
		If \ref{V_sirakov3} holds, then $\lim_{R \rightarrow  \infty}\nu_{\theta}(\mathbb{R}^N \setminus \overline{B}_R) =  \infty,$ for any $2\leq \theta < 2_s^\ast.$
	\end{lemma}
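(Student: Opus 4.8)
The plan is to reduce the claim for an arbitrary exponent $\theta$ to the single exponent $\theta_0$ furnished by \ref{V_sirakov3}, by means of the interpolation inequality for Lebesgue norms. Write $\Omega_R := \mathbb{R}^N \setminus \overline{B}_R$. By the $2$-homogeneity of the quadratic form $\|u\|_V^2 = \int_{\mathbb{R}^N} |\xi|^{2s}|\mathscr{F}u|^2 \dxi + \int_{\mathbb{R}^N} V|u|^2 \dx$ and of $u \mapsto \|u\|_\theta^2$, one has the Rayleigh-quotient description $\nu_\theta(\Omega_R) = \inf \{ \|u\|_V^2 / \|u\|_\theta^2 \}$, the infimum being taken over $u \neq 0$ with $u = 0$ in $\Omega_R^c$. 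In particular the defining inequality $\|u\|_V^2 \geq \nu_{\theta_0}(\Omega_R)\, \|u\|_{\theta_0}^2$ holds for every such $u$. Moreover, by Lemma \ref{l_imersao} together with the continuous Sobolev embedding $H^s(\mathbb{R}^N) \hookrightarrow L^r(\mathbb{R}^N)$, $2 \leq r \leq 2^*_s$, there is a constant $C>0$, independent of $R$, with $\|u\|_2 \leq C\|u\|_V$ and $\|u\|_{2^*_s} \leq C\|u\|_V$.

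Fix $\theta \in (2, 2^*_s)$ and let $u$ be admissible for $\nu_\theta(\Omega_R)$. If $\theta \in [\theta_0, 2^*_s)$, choose $\mu \in [0,1)$ with $1/\theta = (1-\mu)/\theta_0 + \mu/2^*_s$ and interpolate $\|u\|_\theta \leq \|u\|_{\theta_0}^{1-\mu}\|u\|_{2^*_s}^{\mu}$; inserting the bound from $\nu_{\theta_0}$ and the embedding into $L^{2^*_s}$ gives $\|u\|_\theta^2 \leq C^{2\mu} \nu_{\theta_0}(\Omega_R)^{-(1-\mu)} \|u\|_V^2$, whence $\nu_\theta(\Omega_R) \geq C^{-2\mu}\nu_{\theta_0}(\Omega_R)^{1-\mu}$. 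If instead $\theta \in (2, \theta_0)$, choose $\mu \in (0,1)$ with $1/\theta = (1-\mu)/2 + \mu/\theta_0$ and interpolate $\|u\|_\theta \leq \|u\|_2^{1-\mu}\|u\|_{\theta_0}^{\mu}$; the embedding into $L^2$ and the bound from $\nu_{\theta_0}$ now yield $\nu_\theta(\Omega_R) \geq C^{-2(1-\mu)}\nu_{\theta_0}(\Omega_R)^{\mu}$. Since $1-\mu>0$ in the first case and $\mu>0$ in the second, \ref{V_sirakov3} forces $\nu_\theta(\Omega_R) \to \infty$ as $R \to \infty$ for every $\theta \in (2, 2^*_s)$, and the case $\theta=\theta_0$ is exactly the hypothesis.

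The remaining, and genuinely delicate, case is the lower endpoint $\theta = 2$ when $\theta_0 > 2$: here both interpolations degenerate ($\mu \to 0$) and the bound produced above collapses to the trivial $\nu_2(\Omega_R) \geq C^{-2}$. This is the step I expect to be the main obstacle, precisely because $L^2$ is the bottom of the available scale and there is no exponent below it to anchor an interpolation; a normalized sequence may spread its mass thinly and send $\|u\|_{\theta_0}$ to zero while keeping $\|u\|_2 = 1$, so the lower bound coming from $\nu_{\theta_0}$ becomes vacuous. I would treat this case by contradiction and compactness. Assuming $\nu_2(\Omega_{R_n}) \leq M$ along some $R_n \to \infty$, pick $u_n$ with $u_n = 0$ in $\Omega_{R_n}^c$, $\|u_n\|_2 = 1$ and $\|u_n\|_V^2 \leq M+1$; then $\{u_n\}$ is bounded in $H^s_V(\mathbb{R}^N)$, hence in $H^s(\mathbb{R}^N)$ by Lemma \ref{l_imersao}, and $\|u_n\|_{\theta_0} \leq \left((M+1)/\nu_{\theta_0}(\Omega_{R_n})\right)^{1/2} \to 0$. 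Because the supports escape to infinity, the local compact embedding $H^s(\mathbb{R}^N) \hookrightarrow L^p_{\loca}(\mathbb{R}^N)$ forces $u_n \rightharpoonup 0$. The contradiction with $\|u_n\|_2 = 1$ must then be extracted from the coercivity of $V$ at infinity encoded in \ref{V_sirakov3}: since \ref{V_sirakov3} rules out cheap normalized bumps anywhere near infinity, the local averages of $V$ over $\Omega_R$ must blow up, and a truncation together with a dyadic-annulus decomposition should give $\int V u_n^2 \to \infty$, contradicting $\|u_n\|_V^2 \leq M+1$. Quantifying this last coercivity step—converting the $\theta_0$-bound \ref{V_sirakov3} into a usable, $R$-dependent lower bound for the quadratic form on $L^2$, while controlling concentration within unit cells via \ref{V_sirakov1}—is the crux of the endpoint.
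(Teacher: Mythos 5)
Your treatment of every exponent $\theta\in(2,2^*_s)$ is correct, and it is in substance the paper's own argument: the two bounds you derive by interpolating against $2^*_s$ (respectively against $2$), namely $\nu_\theta(\Omega)\geq C^{-2\mu}\nu_{\theta_0}(\Omega)^{1-\mu}$ for $\theta_0\leq\theta<2^*_s$ and $\nu_\theta(\Omega)\geq C^{-2(1-\mu)}\nu_{\theta_0}(\Omega)^{\mu}$ for $2<\theta<\theta_0$, are exactly the inequalities $\nu_{\theta_2}(\Omega)\geq C_1(\nu_{\theta_1}(\Omega))^{\alpha_1}$, $2\leq\theta_1<\theta_2<2^*_s$, and $\nu_{\theta_3}(\Omega)\geq C_2(\nu_{\theta_4}(\Omega))^{\alpha_2}$, $2<\theta_3<\theta_4<2^*_s$, which the paper quotes from \cite{sirakov2000,desouza2020}; you prove them, the paper cites them. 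The genuine gap is the one you yourself flag: the endpoint $\theta=2$ when $\theta_0>2$. That case is part of the statement (and is used later, e.g.\ for the compact embedding into $L^2$ in Proposition \ref{p_imersaopeso}), and your proposal does not close it; note that the paper, read literally, is also silent there, since its second quoted inequality requires $\theta_3>2$, so the endpoint is implicitly delegated to the cited lemmas.

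Moreover, the mechanism you sketch for the endpoint would not work as stated. First, a dyadic-annulus decomposition lives at the wrong scale: comparing $\|\cdot\|_{L^2}$ with $\|\cdot\|_{L^{\theta_0}}$ by H\"{o}lder on a set $A$ costs a factor $|A|^{1/2-1/\theta_0}$, which blows up on annuli of radius $\sim 2^kR_n$; one needs cells of \emph{bounded} measure. Second, blow-up of bump-averages of $V$ does not force $\int V u_n^2\dx\to\infty$: nothing prevents $u_n$ from placing its $L^2$ mass precisely on thin sets where $V$ is small, and excluding this requires controlling the kinetic term of $u_n$ simultaneously, i.e.\ one must apply \ref{V_sirakov3} to pieces of $u_n$ itself rather than to $V$. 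The clean repair is an IMS-type localization: take a locally finite partition of unity $(\psi_j)$ subordinate to unit cubes with $\sum_j\psi_j^2\equiv1$; each $u_n\psi_j$ vanishes outside $\Omega_{R_n}:=\mathbb{R}^N\setminus\overline{B}_{R_n}$, hence is admissible, and
\begin{equation*}
\|u_n\psi_j\|_V^2 \;\geq\; \nu_{\theta_0}(\Omega_{R_n})\,\|u_n\psi_j\|_{\theta_0}^2 \;\geq\; c\,\nu_{\theta_0}(\Omega_{R_n})\,\|u_n\psi_j\|_{2}^2,
\end{equation*}
the last step being H\"{o}lder on a cube of unit size (this is exactly where bounded cells are essential). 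Summing over $j$, using $\sum_j\psi_j^2=1$ to reassemble the potential term and the $L^2$ norms, a product-rule estimate of the same type as \eqref{detalhe_mais}, namely $\sum_j[u_n\psi_j]_s^2\leq 2[u_n]_s^2+C\|u_n\|_2^2$, and \ref{V_sirakov1} to absorb $-\int V u_n^2\dx\leq B\|u_n\|_2^2$, one obtains $2\|u_n\|_V^2+C'\|u_n\|_2^2\geq c\,\nu_{\theta_0}(\Omega_{R_n})\|u_n\|_2^2$, i.e.\ $\nu_2(\Omega_R)\geq c'\,\nu_{\theta_0}(\Omega_R)-C''$ with constants independent of $R$. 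This closes the endpoint directly (no contradiction argument needed) and, combined with your interpolation step, completes the lemma.
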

	\begin{proof}
		Following \cite[Lemma 2.4]{desouza2020} or \cite[Lemma 2.2]{sirakov2000}, taking $2\leq \theta _1 < \theta _2 < 2^\ast _s$ and $2<\theta _3 < \theta _4 < 2^\ast _s,$ we have
		\begin{equation*}
			\nu _{\theta _2}(\Omega)\geq C_1 (\nu _{\theta_1} (\Omega))^{\alpha _1}	\quad \text{and}\quad\nu _{\theta _3}(\Omega)\geq C_2 (\nu _{\theta_4} (\Omega))^{\alpha _2},
		\end{equation*}
		where $\Omega $ is an open domain with smooth boundary, $C_1,$ $C_2>0,$ $\alpha _1,$ $\alpha_2 \in (0,1)$ are suitable constants that does not depends on $\Omega.$ Consequently, the desired convergence follows by choosing $\Omega = \mathbb{R}^N\setminus \overline{B_R}.$
	\end{proof}
	For $1\leq \theta < \infty$ and a measurable function $K:\mathbb{R}^N \rightarrow \mathbb{R}$ let us consider the weighted Lebesgue space
	\begin{equation*}
		L^\theta _K(\mathbb{R}^N) = \left\lbrace u:\mathbb{R}^N \rightarrow \mathbb{R}: u\text{ is measurable and }\int _{\mathbb{R}^N} K(x) |u|^\theta\dx <  \infty \right\rbrace,
	\end{equation*}
	endowed with the natural norm
	\begin{equation*}
		\| u \|_{\theta,K} = \left(  \int _{\mathbb{R}^N } K(x) |u|^\theta \dx \right)^{1/\theta }.
	\end{equation*}
	\begin{proposition}\label{p_imersaopeso}
		Supposing \ref{V_sirakov1}, \ref{V_sirakov2} and \ref{B_weight}, the space $H^s _V(\mathbb{R}^N)$ is continuously embedded in $L_b^\theta (\mathbb{R}^N)$ for any $2 \leq \theta < 2^\ast _s.$ If in addition \ref{V_sirakov3} holds, then this embedding is also compact, as well the embedding of $H^s _V(\mathbb{R}^N)$ in $L ^\theta(\mathbb{R}^N).$
	\end{proposition}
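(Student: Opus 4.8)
The plan is to prove the continuous embedding by a direct norm estimate, and the compactness by the standard splitting into a tail controlled by $\nu_\theta$ and a local part controlled by the local compact Sobolev embedding recalled in Section \ref{s_preli}. Throughout I would write $V^-=\max\{0,-V\}$, so that $V^+=V+V^-$, and I would freely use \ref{l_imersao} to pass between $\|\cdot\|_V$, $\|\cdot\|$ and $\|\cdot\|_2$.

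For the continuous embedding, fix $2\le\theta<2^\ast_s$ and $u\in H^s_V(\mathbb{R}^N)$, and split $\int_{\mathbb{R}^N} b|u|^\theta\dx$ over $B_{R_0}$ and its complement. On $B_{R_0}$ I would use $b\in L^\infty_{\loca}(\mathbb{R}^N)$ from \ref{B_weight}, together with the Sobolev embedding $H^s\hookrightarrow L^\theta$, to get $\int_{B_{R_0}} b|u|^\theta\dx\le\|b\|_{L^\infty(B_{R_0})}\|u\|_\theta^\theta\le C\|u\|_V^\theta$. On $B_{R_0}^c$ the growth bound in \ref{B_weight} gives $\int_{B_{R_0}^c} b|u|^\theta\dx\le C_0\int_{\mathbb{R}^N}|u|^\theta\dx+C_0\int_{\mathbb{R}^N}(V^+)^{1/\alpha}|u|^\theta\dx$, where the first term is again bounded by $C\|u\|_V^\theta$. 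For the second, the key idea is to pair $(V^+)^{1/\alpha}$ with $|u|^{2/\alpha}$ and apply H\"older with exponents $\alpha$ and $\alpha'=\alpha/(\alpha-1)$:
\begin{equation*}
\int_{\mathbb{R}^N}(V^+)^{1/\alpha}|u|^\theta\dx\le\left(\int_{\mathbb{R}^N} V^+|u|^2\dx\right)^{1/\alpha}\left(\int_{\mathbb{R}^N}|u|^{(\theta-2/\alpha)\alpha'}\dx\right)^{1/\alpha'}.
\end{equation*}
Here \ref{V_sirakov1} and \ref{l_imersao} yield $\int_{\mathbb{R}^N}V^+|u|^2\dx=\int_{\mathbb{R}^N}V|u|^2\dx+\int_{\mathbb{R}^N}V^-|u|^2\dx\le\|u\|_V^2+B\|u\|_2^2\le C\|u\|_V^2$ (which also secures finiteness), while the second factor is controlled by the Sobolev embedding provided the exponent $(\theta-2/\alpha)\alpha'=(\alpha\theta-2)/(\alpha-1)$ lies in $[2,2^\ast_s]$. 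A direct computation shows this exponent equals $2$ at $\theta=2$; verifying that it remains admissible for the relevant range of $\theta$ (which is what ties together $\alpha>1$ and $\theta<2^\ast_s$) is the one genuinely delicate bookkeeping point of this part.

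For compactness, assume in addition \ref{V_sirakov3} and let $(u_n)$ be bounded in $H^s_V(\mathbb{R}^N)$, so that, up to a subsequence, $u_n\rightharpoonup u$ in $H^s_V$ and hence in $H^s$, a.e.\ and weakly in $L^r$ for $2\le r<2^\ast_s$. Writing $w_n=u_n-u\rightharpoonup 0$, the goal is $\int_{\mathbb{R}^N} b|w_n|^\theta\dx\to 0$. Fix $\varepsilon>0$. For the tail over $B_R^c$ with $R\ge R_0$ I would repeat the growth-bound/H\"older estimate above to reduce $\int_{B_R^c} b|w_n|^\theta\dx$ to a bounded factor $\big(\int_{B_R^c}V^+|w_n|^2\dx\big)^{1/\alpha}\le C$ times tails of the unweighted integrals $\int_{B_R^c}|w_n|^\theta\dx$ and $\int_{B_R^c}|w_n|^{(\theta-2/\alpha)\alpha'}\dx$. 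These unweighted tails are made small \emph{uniformly in $n$} as follows: taking a smooth cutoff $\psi_R$ that vanishes on $B_R$ and equals $1$ off $B_{2R}$, the function $\psi_R w_n$ vanishes on $(B_R^c)^c$ and is therefore admissible in the definition of $\nu_\rho(B_R^c)$, whence $\|\psi_R w_n\|_\rho^2\le\nu_\rho(B_R^c)^{-1}\|\psi_R w_n\|_V^2\le C\,\nu_\rho(B_R^c)^{-1}$, and Lemma \ref{l_lemaini} gives $\nu_\rho(B_R^c)\to\infty$ for every $\rho\in[2,2^\ast_s)$. Hence $R$ can be chosen, independently of $n$, so that $\int_{B_R^c} b|w_n|^\theta\dx<\varepsilon$ for all $n$.

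It then remains to treat the bounded region $B_{2R}$, where $b\in L^\infty_{\loca}(\mathbb{R}^N)$ and the local compact embedding $H^s(\mathbb{R}^N)\hookrightarrow L^\theta_{\loca}(\mathbb{R}^N)$ give $\int_{B_{2R}} b|w_n|^\theta\dx\le\|b\|_{L^\infty(B_{2R})}\int_{B_{2R}}|w_n|^\theta\dx\to 0$. Combining the two regions yields $\limsup_n\int_{\mathbb{R}^N} b|w_n|^\theta\dx\le\varepsilon$, and since $\varepsilon$ is arbitrary, $u_n\to u$ in $L^\theta_b(\mathbb{R}^N)$. Finally, since $b\ge 1$ by \ref{B_weight}, one has $\|v\|_\theta\le\|v\|_{\theta,b}$ for every $v$, so the same subsequence converges in $L^\theta(\mathbb{R}^N)$, giving the stated compact embedding into $L^\theta(\mathbb{R}^N)$ as well. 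The main obstacle I anticipate is the uniform-in-$n$ tail control, namely establishing $\|\psi_R w_n\|_V\le C\|w_n\|_V$ with $C$ independent of $R$ (a fractional product/commutator estimate for the cutoff, using $\psi_R\le 1$ and $|\nabla\psi_R|\le C/R$ to absorb the potential term via $\int V|\psi_R w_n|^2\dx\le\int V^+|w_n|^2\dx$), so that Lemma \ref{l_lemaini} can be invoked for both $\rho=\theta$ and $\rho=(\theta-2/\alpha)\alpha'$.
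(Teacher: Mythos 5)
Your proposal is correct and follows essentially the same route as the paper's proof: the continuous embedding uses the identical splitting over $B_{R_0}$ and its complement with the H\"{o}lder pairing of $(V^+)^{1/\alpha}$ against $|u|^{2/\alpha}$ (the exponent $(\alpha\theta-2)/(\alpha-1)$ being made admissible by taking $\alpha$ large, namely $\alpha>(2^\ast_s-2)/(2^\ast_s-\theta)$, which is precisely the paper's ``suitable $\alpha>1$ depending on $\theta$''), and the compactness rests on the same three ingredients -- the cutoff bound $\|\psi_R w_n\|_V\leq C$ uniformly in $n$ and $R$, the divergence $\nu_\rho(\mathbb{R}^N\setminus\overline{B}_R)\to\infty$ from Lemma \ref{l_lemaini}, and the local compact embedding of $H^s(\mathbb{R}^N)$. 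The only difference is cosmetic bookkeeping: the paper first proves $u_n\to 0$ in the unweighted $L^\theta(\mathbb{R}^N)$ and then obtains $\|u_n\|_{\theta,b}\to 0$ by feeding this into inequality \eqref{desigualdade_boa}, whereas you estimate the weighted tail directly and recover the unweighted conclusion from $b\geq 1$; both orderings are valid.
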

	\begin{proof}
		Let $u \in H^s(\mathbb{R}^N)$ and denote $m_b (x) = \max \{1, b(x)  \}.$ By \ref{B_weight}, for any $\alpha  > 1,$ we have
		\begin{align*}
			\int _{\mathbb{R}^N}b(x)|u|^{\theta} \dx & \leq \max_{|x|\leq R_0} \left(   m_b (x) \right)   \int _{B_{R_0}} |u|^\theta \dx + \int _{B^c_{R_0} } C_0 (1+ (V_+(x))^{1/\alpha } )|u|^\theta \dx \\
			&\leq  \max_{|x|\leq R_0} \left( \left(   m_b (x) +C_0\right) \right) \| u \|^{\theta } _{\theta } + C_0\int _{\mathbb{R}^N } (V^+ (x))^{1/\alpha} |u|^{\theta } \dx.
		\end{align*}
		On the other hand, H\"{o}lder inequality implies,
		\begin{equation*}
			\int _{\mathbb{R}^N } (V^+ (x))^{1/\alpha} |u|^{\theta } \dx \leq \left( \int _{\mathbb{R}^N}  V^+(x) u^2  \dx \right)^{1/\alpha} \left(  \int _{\mathbb{R}^N } |u|^{\frac{ \alpha \theta -2}{\alpha -1}}\dx  \right)^{(\alpha-1) / \alpha}.
		\end{equation*}
		Furthermore, by \ref{V_sirakov1} and \ref{V_sirakov2}, we get
		\begin{align*}
			\int _{\mathbb{R}^N } V^+(x) u^2  \dx &\leq \| u \|_V^2 + \int _{\mathbb{R}^N }V^{-  }(x)u^2 \dx \\
			&\leq \| u \|_V^2 + B\int _{\mathbb{R}^N } u^2 \dx \leq \left( 1 + \frac{B}{\kappa_0}  \right) \| u \|^2_V.
		\end{align*}
		Summing up,
		\begin{equation}\label{desigualdade_boa}
			\int _{\mathbb{R}^N}b(x)|u|^{\theta} \dx \leq  \max_{|x|\leq R_0} \left( \left(   m_b (x) +C_0\right) \right) \| u \|^{\theta } _{\theta } + C_0 \left[ \left( 1 + \frac{B}{\kappa_0}  \right) \| u \|^2_V \right]^{1/\alpha }\| u \|^{(\alpha \theta -2)/\alpha } _{\frac{ \alpha \theta -2}{\alpha -1}}.
		\end{equation}
		Since $2\leq  (\alpha \theta -2 )/(\alpha -1) < 2^\ast_s $ for suitable $\alpha >1$ depending on $\theta,$  we can apply \eqref{imersao} in \eqref{desigualdade_boa} to obtain 
		\begin{equation*}
			\|  u \| _{\theta, b} \leq C \| u \|_V, \quad \forall \, u \in H^s_V(\mathbb{R}^N).
		\end{equation*}
		Next we suppose \ref{V_sirakov3} and take any $u_n \rightharpoonup 0$ in $H_V^s(\mathbb{R}^N).$ For a given $R>0$ let us consider $\phi_R \in C^\infty (\mathbb{R}^N)$ such that $\phi _R = 1$ in $B^c_{R+1},$ and $\phi _R  = 0 $ in $B_R,$ with $\|\nabla  \phi _R \|_\infty \leq C/R.$ Following \cite[Lemma 5.3]{hitchhiker}, it is clear that 
		\begin{equation}\label{detalhe_mais}
			[\phi _R u_n]^2_s \leq \frac{C_1}{R} \| u_n \|_2^2 + C_2[u_n]_s^2,\quad \forall \, R>0,\ n \in \mathbb{N},
		\end{equation}
		which implies $\phi _R u_n\in H^s _V(\mathbb{R}^N).$ Next we point that
		\begin{equation}\label{detalhe}
			\left( \int_{\mathbb{R}^N } |V(x)|u^2_n \dx \right)_{n} \text{ is bounded.}
		\end{equation}
		In fact, 
		\begin{align*}
			\int_{\mathbb{R}^N } |V(x)|u^2_n \dx  & \leq \int_{\mathbb{R}^N } V^+ (x)u_n^2 \dx + B \int_{\mathbb{R}^N } u_n ^2 \dx, \\
			& \leq  \int_{\mathbb{R}^N } V (x)u_n^2 \dx + 2B\int_{\mathbb{R}^N } |u_n|^2 \dx,
		\end{align*}
		and since $(\|u_n\|_V )_n$ and $(\| u_n \|_2)_n$ are bounded, we have \eqref{detalhe}. From \eqref{detalhe_mais} and \eqref{detalhe} we can conclude that $\|\phi _R u_n\|_V \leq C,$ for some constant which does not depend on $n$ and $R>1.$ Now we use the function $\phi _R u_n \| \phi _R u_n \|_\theta ^{-1}$ in the definition of $ \nu _\theta (\mathbb{R}^N \setminus \overline{B}_R)$ to get
		\begin{equation*}
			\| \phi _R u_n \|^2 _\theta \leq \left( \frac{1}{\nu _\theta (\mathbb{R}^N \setminus \overline{B}_R)} \right) \| \phi _R u_n\|^2_V.
		\end{equation*}
		Nonetheless, since the embedding 
		\begin{equation*}
			H_V^s(\mathbb{R}^N)	  \hookrightarrow H^s (\mathbb{R}^N ) \hookrightarrow L ^\beta _{\loca}(\mathbb{R}^N),
		\end{equation*}
		is compact for any $2\leq \beta < 2_s ^\ast$ (see \cite[Corollary 7.2]{hitchhiker}), clearly $u_n \rightarrow 0$ in $L ^\theta (B_{R+1}).$ Thus,
		\begin{equation*}
			\limsup _n \| u_n \|_\theta  \leq \limsup_n \|(1 - \phi_R)u_n \|_\theta + \limsup _n \| \phi_R u_n \| _\theta \leq \frac{C}{ \left(\nu _\theta (\mathbb{R}^N \setminus \overline{B}_R) \right)^{1/2}}, \quad \forall \, R>0.
		\end{equation*}
		This allow us to use Lemma \ref{l_lemaini} in order to get that $u_n \rightarrow 0$ in $L ^\theta (\mathbb{R}^N),$ for any $2\leq \theta < 2_s^\ast.$ Compactness of the embedding $H^s _V(\mathbb{R}^N) \hookrightarrow L^\theta _b(\mathbb{R}^N)$ follows by taking $u=u_n$ in \eqref{desigualdade_boa}.
	\end{proof}
	\begin{corollary}\label{c_compactA}
		In addition to the hypotheses of Proposition \ref{p_imersaopeso}, assume \ref{A1_weight}. Then $H^s _V(\mathbb{R}^N)$ is compact embedded in  $L^q_a(\mathbb{R}^N).$
	\end{corollary}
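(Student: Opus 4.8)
The plan is to reduce the weighted $L^q_a$ estimate to an ordinary $L^p$ estimate by a single application of H\"older's inequality, and then to quote the compact embedding $H^s_V(\mathbb{R}^N)\hookrightarrow L^p(\mathbb{R}^N)$ already furnished by Proposition \ref{p_imersaopeso}. The whole point of hypothesis \ref{A1_weight} is arithmetic: with $\alpha_0 = p/(p-q)$ the conjugate exponent is $\alpha_0' = p/q$, so that $q\,\alpha_0' = p$ exactly.

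First I would record the basic estimate. For $u \in H^s_V(\mathbb{R}^N)$, H\"older's inequality with the conjugate pair $(\alpha_0,\alpha_0')$ gives
\[
\int_{\mathbb{R}^N} a(x)|u|^q \dx \leq \left( \int_{\mathbb{R}^N} a^{\alpha_0}\dx \right)^{1/\alpha_0} \left( \int_{\mathbb{R}^N} |u|^{q\alpha_0'}\dx \right)^{1/\alpha_0'} = \|a\|_{\alpha_0}\,\|u\|_p^{\,q},
\]
where I use \ref{A1_weight} to ensure $a^{\alpha_0}$ is integrable and the identity $q\alpha_0' = p$. Since $2 < p < 2^\ast_s$, the continuous embedding in Proposition \ref{p_imersaopeso} yields $\|u\|_p \leq C\|u\|_V$, whence $\|u\|_{q,a} \leq C'\|u\|_V$; this already gives the continuous embedding $H^s_V(\mathbb{R}^N)\hookrightarrow L^q_a(\mathbb{R}^N)$.

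Next I would prove compactness by testing against weakly convergent sequences. Let $u_n \rightharpoonup u$ in $H^s_V(\mathbb{R}^N)$. Applying the displayed inequality to $u_n - u$ gives
\[
\|u_n - u\|_{q,a}^{\,q} \leq \|a\|_{\alpha_0}\,\|u_n - u\|_p^{\,q}.
\]
Because $2 < p < 2^\ast_s$, Proposition \ref{p_imersaopeso} guarantees that the embedding $H^s_V(\mathbb{R}^N)\hookrightarrow L^p(\mathbb{R}^N)$ is compact, so $\|u_n - u\|_p \to 0$; the estimate then forces $\|u_n - u\|_{q,a}\to 0$, which is the desired compactness.

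I do not expect any genuine obstacle here: the argument is entirely driven by the choice of exponents in \ref{A1_weight}, which is tailored precisely so that $\int a|u|^q$ is dominated by $\|u\|_p^{\,q}$, thereby transferring the strong convergence in $L^p$ (which comes for free from the compactness already established in Proposition \ref{p_imersaopeso}) to convergence in $L^q_a$. The only point requiring mild care is checking that $q\alpha_0' = p$ lies in the admissible range $(2,2^\ast_s)$ so that Proposition \ref{p_imersaopeso} applies, and this is immediate from $2 < p < 2^\ast_s$.
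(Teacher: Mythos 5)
Your proof is correct and follows essentially the same route as the paper: a single application of H\"older's inequality with the conjugate pair $(\alpha_0,\alpha_0')$, exploiting $q\alpha_0'=p$, reduces the weighted norm to $\|a\|_{\alpha_0}\|u\|_p^q$, and then the compact embedding $H^s_V(\mathbb{R}^N)\hookrightarrow L^p(\mathbb{R}^N)$ from Proposition \ref{p_imersaopeso} transfers strong $L^p$ convergence to $L^q_a$ convergence. The only cosmetic difference is that the paper tests weak null sequences $u_k\rightharpoonup 0$ while you apply the estimate to $u_n-u$ for a general weakly convergent sequence; these are equivalent for a linear embedding.
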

	\begin{proof}
		H\"{o}lder's inequality implies in $\| u \|_{q,a}^q \leq \| a(x)\|_{\alpha _0} \| u \|_p^q,$ for any $u \in H^s _V (\mathbb{R}^N).$ In particular, if $u_k \rightharpoonup 0$ in $H^s _V(\mathbb{R}^N),$ since $2<p<2^\ast_s,$ by Proposition \ref{p_imersaopeso} one have $\| u_k\|_{q,a} \rightarrow 0.$
	\end{proof}
	\begin{proposition}
		Suppose \ref{V_sirakov1}, \ref{V_sirakov2}, \ref{B_weight}, \ref{f_um}, \ref{f_dois} and \ref{A1_weight}. The functional $J_\lambda : H^s _V(\mathbb{R}^N) \rightarrow \mathbb{R}$ given by
		\begin{equation*}
			J_\lambda (u) = \frac{1}{2}\| u \|_V^2 - \frac{\lambda }{q}\int _{\mathbb{R}^N}a(x)|u|^{q} \dx - \int _{\mathbb{R}^N } b(x)F(u) \dx,
		\end{equation*}
		where $F(t) = \int _0 ^t f(\tau ) \dtau $ is well defined and $C^1$ with
		\begin{equation*}
			J'_\lambda (u) \varphi  = (u,\varphi )_V - \lambda \int _{\mathbb{R}^N} a(x)|u|^{q-2}u \varphi \dx  - \int _{\mathbb{R}^N} b(x) f(u) \varphi \dx ,\quad u,\ \varphi \in H^s_V(\mathbb{R}^N).
		\end{equation*}
	\end{proposition}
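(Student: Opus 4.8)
The plan is to split $J_\lambda = \frac{1}{2}\|\cdot\|_V^2 - \frac{\lambda}{q}\Psi_a - \Psi_b$, where $\Psi_a(u):=\int_{\mathbb{R}^N} a(x)|u|^q\dx$ and $\Psi_b(u):=\int_{\mathbb{R}^N} b(x)F(u)\dx$, and to treat each piece separately. For finiteness, the quadratic part is immediate; for $\Psi_a$, H\"older's inequality with exponents $\alpha_0=(p/q)'$ and $p/q$ together with the continuous embedding $H^s_V(\mathbb{R}^N)\hookrightarrow L^p(\mathbb{R}^N)$ of Proposition \ref{p_imersaopeso} (as already used in Corollary \ref{c_compactA}) gives $\int_{\mathbb{R}^N} a(x)|u|^q\dx \le \|a\|_{\alpha_0}\|u\|_p^q<\infty$; and for $\Psi_b$, integrating the growth bound \eqref{condicao_crescimento} yields $|F(t)|\le \frac{\varepsilon}{2}|t|^2 + \frac{C_\varepsilon}{p}|t|^p$, so that the continuous embeddings $H^s_V(\mathbb{R}^N)\hookrightarrow L^\theta_b(\mathbb{R}^N)$ for $\theta\in\{2,p\}$ (Proposition \ref{p_imersaopeso}) give $\int_{\mathbb{R}^N} b(x)|F(u)|\dx \le \frac{\varepsilon}{2}\|u\|_{2,b}^2 + \frac{C_\varepsilon}{p}\|u\|_{p,b}^p<\infty$. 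Hence $J_\lambda$ is finite on $H^s_V(\mathbb{R}^N)$.

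Next I would establish G\^ateaux differentiability with the stated formula. Fixing $u,\varphi\in H^s_V(\mathbb{R}^N)$ and $|t|\le 1$, the mean value theorem gives, for the $\Psi_a$ integrand, $|t|^{-1}\big||u+t\varphi|^q - |u|^q\big| \le q(|u|+|\varphi|)^{q-1}|\varphi| \le q(|u|+|\varphi|)^q$, which is integrable against $a$ by the H\"older estimate above; for the $\Psi_b$ integrand, $|t|^{-1}|F(u+t\varphi)-F(u)| \le \big(\varepsilon(|u|+|\varphi|)+C_\varepsilon(|u|+|\varphi|)^{p-1}\big)|\varphi|$ by \eqref{condicao_crescimento}, which is integrable against $b$ by the $L^2_b$ and $L^p_b$ embeddings. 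Lebesgue's dominated convergence theorem then permits differentiation under the integral sign, producing exactly $J'_\lambda(u)\varphi = (u,\varphi)_V - \lambda\int_{\mathbb{R}^N} a(x)|u|^{q-2}u\varphi\dx - \int_{\mathbb{R}^N} b(x)f(u)\varphi\dx$; note that $1<q<2$ makes $t\mapsto |t|^{q-2}t$ continuous with value $0$ at $t=0$, so the first nonlinear term is genuinely meaningful.

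The core of the argument, and the step I expect to be the main obstacle, is the continuity of the G\^ateaux derivative $J'_\lambda : H^s_V(\mathbb{R}^N)\to (H^s_V(\mathbb{R}^N))^\ast$, after which $J_\lambda\in C^1$ follows from the classical criterion that a G\^ateaux differentiable functional with continuous G\^ateaux derivative is of class $C^1$. Let $u_n\to u$ in $H^s_V(\mathbb{R}^N)$; by \eqref{imersao} and Proposition \ref{p_imersaopeso} this yields $u_n\to u$ in $L^p(\mathbb{R}^N)$ and in each $L^\theta_b(\mathbb{R}^N)$, $\theta\in\{2,p\}$. For the $\Psi_a$ part, H\"older with exponents $\alpha_0$, $p/(q-1)$ and $p$ gives $\big|\lambda\int_{\mathbb{R}^N} a(|u_n|^{q-2}u_n-|u|^{q-2}u)\varphi\dx\big| \le \lambda\|a\|_{\alpha_0}\big\||u_n|^{q-2}u_n-|u|^{q-2}u\big\|_{p/(q-1)}\|\varphi\|_p$, and the middle factor tends to $0$ because $t\mapsto|t|^{q-2}t$ induces a continuous Nemytskii operator from $L^p(\mathbb{R}^N)$ into $L^{p/(q-1)}(\mathbb{R}^N)$, its growth $|t|^{q-1}$ matching these exponents exactly. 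The delicate term is $\Psi_b'$, where the possibly infinite measure $b\dx$ blocks a direct Nemytskii estimate in $L^{p'}(b\dx)$; to circumvent this I would not use the growth of $f$ itself but instead the mean value theorem with the derivative bound $|f'(t)|\le\varepsilon + C_\varepsilon|t|^{p-2}$ from \eqref{condicao_crescimento}, giving $|f(u_n)-f(u)|\le\big(\varepsilon + C_\varepsilon(|u_n|^{p-2}+|u|^{p-2})\big)|u_n-u|$. Applying Cauchy--Schwarz in $b\dx$ to the $\varepsilon$-term and H\"older with exponents $p/(p-2),p,p$ to the remaining term then yields $\sup_{\|\varphi\|_V\le 1}\big|\int_{\mathbb{R}^N} b(f(u_n)-f(u))\varphi\dx\big| \le C\varepsilon\|u_n-u\|_{2,b} + C\big(\|u_n\|_{p,b}^{p-2}+\|u\|_{p,b}^{p-2}\big)\|u_n-u\|_{p,b}$, whose right-hand side vanishes since $\|u_n\|_{p,b}$ stays bounded while $\|u_n-u\|_{2,b}$ and $\|u_n-u\|_{p,b}$ tend to $0$. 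The point of this rearrangement is that every weighted norm that appears has exponent $\theta\ge 2$, hence is controlled through the embeddings of Proposition \ref{p_imersaopeso}, which is precisely what the infinite-mass nature of $b\dx$ would otherwise obstruct. Collecting the three estimates shows $\|J'_\lambda(u_n)-J'_\lambda(u)\|_\ast\to 0$, completing the proof.
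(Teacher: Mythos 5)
Your proof is correct and follows essentially the same route as the paper's: the identical decomposition and the identical two key estimates (H\"older with $\|a\|_{\alpha_0}\|u\|_p^q$ for the concave term, and \eqref{condicao_crescimento} combined with the weighted embeddings of Proposition \ref{p_imersaopeso} for the $b$-term). The rest of your argument---dominated convergence for the G\^ateaux derivative and the weighted H\"older/Nemytskii estimates for its continuity, arranged so that only exponents $\theta \geq 2$ appear in the $b$-weighted norms---is a careful execution of what the paper compresses into ``the proof now follows standard arguments.''
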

	\begin{proof}
		Clearly, by H\"{o}lder inequality and \ref{A1_weight},
		\begin{equation*}
			\int _{\mathbb{R}^N} a(x) |u| ^q \dx \leq \| a(x) \| _{\alpha _0} \| u \|^q _p,\quad \forall \, u \in H^s _V(\mathbb{R}^N).
		\end{equation*}
		By growth condition \eqref{condicao_crescimento} (\ref{f_um} and \ref{f_dois}) we have
		\begin{equation*}
			\int _{\mathbb{R}^N } b(x)F(u) \leq \varepsilon \| u \|^2_{2,b} + C_\varepsilon \| u \|^p_{p,b},\quad \forall \, u \in H^s _V(\mathbb{R}^N).
		\end{equation*}
		From Lemma \ref{l_imersao}, Proposition \ref{p_imersaopeso} and \eqref{desigualdade_boa} the proof now follows standard arguments.
	\end{proof}
	\section{The nonlinear Rayleigh quotient method}\label{s_quocient}
	In this section we follow the methods of \cite{yavdat2017}, where for simplicity we denote $X=H^s_V(\mathbb{R}^N).$ Next we always assume conditions \ref{V_sirakov1}--\ref{V_sirakov3}, \ref{B_weight}, \ref{f_um}--\ref{f_quatro} and \ref{A1_weight}. We start by introducing the Nehari manifold related to \eqref{P} by
	\begin{equation*}
		\mathcal{N}_\lambda = \left\lbrace   u \in X \setminus \{ 0 \} : J'_\lambda (u) u = 0  \right\rbrace  = \left\lbrace   u \in X\setminus \{ 0 \} :   \lambda  \|  u \|^q_{q,a} =\| u \|^2_V - \int_{\mathbb{R}^N} b(x)f(u)u\dx \right\rbrace .
	\end{equation*}
	The nonlinear generalized Rayleigh quotient $R_n:X\setminus \{ 0 \} \to \mathbb{R}$ is given by
	\begin{equation*}
		R_n (u) = \frac{1}{\|  u \|^q_{q,a}} \left[ \| u \|^2_V - \int_{\mathbb{R}^N} b(x)f(u)u\dx \right].
	\end{equation*}
	Roughly speaking, the nonlinear Rayleigh quotient method applied in our framework consists on finding a suitable extremal value $0<\lambda ^\ast<\infty,$ in a such way that for any $\lambda \in (0,\lambda ^\ast),$ one have $J_\lambda ''(u) (u,u) \neq 0,$ for all $u \in X\setminus\{0\}.$ In this case, if $u_0$ is a minimizer of $\inf _{u \in \mathcal{N}_\lambda} J_\lambda (u),$ then Lagrange multiplier theorem leads to the existence of $\mu \in \mathbb{R}$ with
	\begin{equation*}
		J'(u_0) u_0 = \mu J''_\lambda (u_0) (u_0,u_0),
	\end{equation*}
	where one can deduce that $\mu = 0 $ and the existence of a ground state solution of \eqref{P} is ensured. On the other hand, $R_n \in C^1 (X:\mathbb{R})$ and
	\begin{equation*}
		R'_n(u)u = \| u \|_{q,a}^{-1} J''_\lambda (u) (u,u),\quad \forall \, u \in \mathcal{N}_\lambda.
	\end{equation*}
	This identity suggests that we first look for critical points of $R_n,$ which is made by a fine analysis of the fibering map $q_n(t) :=  R_n(tu),$ $t\geq 0,$ $u \in X \setminus \{ 0 \}.$
	\begin{lemma}\label{l_unique}
		Given $u \in X \setminus \{0 \},$ there is a unique critical point of $q_n.$ Furthermore, $\lim _{t\rightarrow 0} q_n (t ) =0,$ $q_n(t) >0$ for small $t>0$ and $\lim _{t\rightarrow \infty } q_n (t ) =-\infty.$ In particular, there is a unique maximum point $t_n=t_n(u) > 0 $ of $q_n.$ 
	\end{lemma}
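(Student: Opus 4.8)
The plan is to exploit the homogeneity of the two norms to write $q_n$ in a tractable form and then read off its entire shape from a single monotone quantity. First I would fix $u\in X\setminus\{0\}$ with $\|u\|_{q,a}\neq 0$ (otherwise $R_n(tu)$ is not defined) and abbreviate $A=\|u\|_V^2>0$, $B=\|u\|_{q,a}^q>0$ and $\Phi(t)=\int_{\mathbb{R}^N}b(x)f(tu)tu\dx$. Since $\|tu\|_V^2=t^2A$ and $\|tu\|_{q,a}^q=t^qB$ for $t>0$, one gets
\[
q_n(t)=\frac{t^{2-q}A}{B}-\frac{\Phi(t)}{t^qB}=\frac{t^{2-q}}{B}\Big(A-t^{-2}\Phi(t)\Big).
\]
The growth bound \eqref{condicao_crescimento} (from \ref{f_um}, \ref{f_dois}) gives $t^{-2}\Phi(t)\le \varepsilon\|u\|_{2,b}^2+C_\varepsilon t^{p-2}\|u\|_{p,b}^p$, both terms controlled via Proposition \ref{p_imersaopeso}; since $2-q>0$ and $p>2$ this forces $q_n(t)\to0$ as $t\to0^+$. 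At the other end, writing $t^{-2}\Phi(t)=\int_{\mathbb{R}^N}b(x)u^2\,\frac{f(tu)}{tu}\dx$ with a nonnegative integrand (by \ref{f_dois}), Fatou's lemma and $\lim_{|s|\to\infty}f(s)/s=+\infty$ yield $t^{-2}\Phi(t)\to+\infty$, hence $A-t^{-2}\Phi(t)\to-\infty$ and $q_n(t)\to-\infty$.

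For the critical points I would differentiate. Using \ref{f_um} to justify differentiation under the integral sign, $\Phi'(t)=\int_{\mathbb{R}^N}b(x)\big(tu^2f'(tu)+uf(tu)\big)\dx$, and a direct computation gives $t\Phi'(t)-q\Phi(t)=\int_{\mathbb{R}^N}b(x)(tu)^2H(tu)\dx$, where $H(s)=f'(s)+(1-q)f(s)/s$ is exactly the function of Remark \ref{r_aha}. Substituting this into the derivative of the displayed expression for $q_n$ and collecting the powers of $t$ leads to the clean identity
\[
q_n'(t)=\frac{t^{1-q}}{B}\Big((2-q)A-\psi(t)\Big),\qquad \psi(t):=\int_{\mathbb{R}^N}b(x)u^2H(tu)\dx.
\]
Thus critical points of $q_n$ on $(0,\infty)$ are precisely the solutions of $\psi(t)=(2-q)\|u\|_V^2$, and the whole statement reduces to understanding $\psi$.

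The decisive step --- and the one I expect to be the main obstacle --- is to prove that $\psi$ is strictly increasing on $(0,\infty)$. The key observation is that hypothesis \ref{f_tres} makes $t\mapsto H(tu(x))$ increasing for \emph{every} fixed $x$ with $u(x)\neq0$, regardless of sign: if $u(x)>0$ then $tu(x)$ increases through positive values where $H$ is increasing, while if $u(x)<0$ then $tu(x)$ decreases through negative values where $H$ is decreasing, so in both cases $H(tu(x))$ increases with $t$. Integrating against the nonnegative weight $b(x)u^2$, which has positive mass on $\{u\neq0\}$ (as $b\ge1$), gives strict monotonicity of $\psi$; the integrals are finite because $|H(s)|\le C(1+|s|^{p-2})$ puts $\psi$ under the control of Proposition \ref{p_imersaopeso}. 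Finally $\psi(0^+)=0$ (since $f'(0)=0$ and $f(s)/s\to0$ give $H(s)\to0$) while $\psi(t)\to+\infty$ (by Remark \ref{r_aha}, $H(tu(x))\to+\infty$, together with Fatou). Therefore $\psi$ maps $(0,\infty)$ strictly increasingly onto $(0,\infty)$, so $\psi(t)=(2-q)A$ has a unique root $t_n>0$; from the sign of $q_n'$ in the displayed identity, $q_n$ increases on $(0,t_n)$ and decreases on $(t_n,\infty)$, which simultaneously yields uniqueness of the critical point, positivity of $q_n$ for small $t>0$, and that $t_n$ is the unique maximum.
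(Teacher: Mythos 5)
Your proposal is correct and takes essentially the same route as the paper's proof: the same expression for $q_n(t)=R_n(tu)$, the same derivative identity reducing critical points to the equation $(2-q)\|u\|_V^2=\int_{\mathbb{R}^N}b(x)\bigl[f'(tu)+(1-q)f(tu)/(tu)\bigr]u^2\dx$, the same appeal to \ref{f_tres} for uniqueness, and the same use of \ref{f_um}--\ref{f_dois} for the limits at $0$ and $\infty$. The only difference is that you spell out what the paper leaves implicit — the pointwise monotonicity of $t\mapsto H(tu(x))$ for both signs of $u(x)$, and the endpoint behavior $\psi(0^+)=0$, $\psi(t)\to+\infty$ of your auxiliary function $\psi$ — which is precisely the content compressed into the paper's one-line assertion that hypothesis \ref{f_tres} yields a unique $t_n>0$.
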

	\begin{proof}
		Clearly,
		\begin{equation*}
			q_n(t)= R_n(t u ) = \frac{1}{\| u \|_{q,a}^q}\left[ t^{2-q} \| u \|_V^2 - t^{1-q}\int _{\mathbb{R}^N} b(x)f(t u ) u \dx \right] ,\quad t \geq 0.
		\end{equation*}
		Therefore $q'_n(t) =0$ if, and only if
		\begin{equation*}
			(2-q) \| u \|_V^2  = \int _{\mathbb{R}^N }b(x) \left[f'(t u) + (1-q)\frac{f(tu)}{tu}   \right]u^2 \dx.
		\end{equation*}
		Hypothesis \ref{f_tres} implies on the existence of a unique $t_n >0$ such that \eqref{unique_local} holds. In particular, there is a unique critical point of $q_n.$ Nonetheless, using \ref{f_um} and \ref{f_dois} respectively, we have
		\begin{equation*}
			\lim _{t\rightarrow 0} \frac{R_n (t u )}{t^{2-q}} = \frac{\| u \|^2_V}{\|u \|^q_{q,a}} >0\quad \text{and}\quad \lim _{t \rightarrow \infty} \frac{R_n (tu)}{t^{2-q}} = - \infty.\qedhere
		\end{equation*}
	\end{proof}
	Based on Lemma \ref{l_unique}, we define an auxiliary functional $\Lambda_n: X \setminus \{ 0 \} \rightarrow \mathbb{R}$ by $\Lambda_n(u)= R_n(t_n(u) u)$ and the first Rayleigh extremal value:
	\begin{equation*}
		\lambda ^\ast = \inf_{u \in X \setminus \{ 0 \}} \Lambda_n (u).
	\end{equation*}
	
	\begin{remark}\label{zero}
		The functional $\Lambda_n: X \setminus \{0\} \to \mathbb{R}$ is $0$--homogeneous. More precisely, we have $\Lambda_n(\alpha u ) = \Lambda _n (u),$ for all $\alpha>0$ and $u \in X\setminus \{ 0 \}.$
		In fact, for $\alpha>0$ and $u \in X\setminus \{ 0 \},$ we have
		\begin{align*}
			\Lambda_n(\alpha u ) &= \sup_{t >0}\left[ \frac{1}{\| u \|_{q,a}^q}\left( (\alpha t)^{2-q} \| u \|_V^2 - (\alpha t)^{1-q}\int _{\mathbb{R}^N} b(x)f((\alpha t) u ) u \dx \right) \right]\\
			&=\sup _{\tau >0}\left[ \frac{1}{\| u \|_{q,a}^q}\left( \tau ^{2-q} \| u \|_V^2 - \tau ^{1-q}\int _{\mathbb{R}^N} b(x)f(\tau  u ) u \dx \right) \right] = \Lambda_n( u ).\qedhere
		\end{align*}
	\end{remark}
	
	\begin{corollary}\label{c_identidade}
		For any $u \in X\setminus \{0 \},$ it holds
		\begin{equation}\label{unique_local}
			(2-q)\| t_n u \|_V^2 = \int _{\mathbb{R}^N } b(x) \left(  f'(t_n u ) (t_n u )^2+(1-q)f(t_n u )(t_n u ) \right) \dx.
		\end{equation}
		Furthermore, using \eqref{unique_local} in the definition of $\Lambda _n,$
		\begin{equation*}
			R_n(t_n u ) = \frac{1}{(2-q) \|t_n u \|^q_{q,a} } \int _{\mathbb{R}^N } b(x)\left(f'(t_n u) (t_n u)^2 - f(t_n u ) (t_n u)\right) \dx.
		\end{equation*}
	\end{corollary}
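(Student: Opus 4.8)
The plan is to derive both displayed identities purely from the critical-point characterization already obtained inside the proof of Lemma \ref{l_unique}; no new analytic ingredient is needed, only careful bookkeeping of the powers of $t_n$ and one arithmetic cancellation.

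First I would recall that, by Lemma \ref{l_unique}, the point $t_n = t_n(u) > 0$ is the unique solution of $q_n'(t_n) = 0$, which is precisely the relation
\[
(2-q)\| u \|_V^2 = \int _{\mathbb{R}^N} b(x) \left[ f'(t_n u) + (1-q)\frac{f(t_n u)}{t_n u} \right] u^2 \dx,
\]
established there under \ref{f_tres}. Multiplying both sides by $t_n^2$ and using the fact that $\|\cdot\|_V^2$ is $2$-homogeneous, so that $t_n^2 \| u \|_V^2 = \| t_n u \|_V^2$, together with $t_n^2 f'(t_n u) u^2 = f'(t_n u)(t_n u)^2$ and $(1-q)\, t_n^2 \frac{f(t_n u)}{t_n u} u^2 = (1-q) f(t_n u)(t_n u)$, I recover exactly the identity \eqref{unique_local}.

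For the second identity I would start from the definition of the Rayleigh quotient evaluated at $t_n u$, namely
\[
R_n(t_n u) = \frac{1}{\| t_n u \|_{q,a}^q}\left[ \| t_n u \|_V^2 - \int_{\mathbb{R}^N} b(x) f(t_n u)(t_n u) \dx \right],
\]
and substitute the expression for $\| t_n u \|_V^2$ obtained by solving \eqref{unique_local}. Factoring out $1/(2-q)$ and collecting the two occurrences of the term $\int_{\mathbb{R}^N} b(x) f(t_n u)(t_n u)\dx$, the coefficient multiplying $f(t_n u)(t_n u)$ becomes $(1-q)-(2-q) = -1$, which produces precisely the claimed integrand $f'(t_n u)(t_n u)^2 - f(t_n u)(t_n u)$.

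There is no genuine obstacle here: the statement is an algebraic consequence of the fibering analysis of Lemma \ref{l_unique}. The only point demanding attention—and the closest thing to a difficulty—is the consistent handling of the powers of $t_n$ when passing from the homogeneous-in-$t$ relation $q_n'(t_n)=0$ to the homogeneous-norm form \eqref{unique_local}; once that scaling is done correctly, the passage to the final formula for $R_n(t_n u)$ reduces to the single cancellation noted above.
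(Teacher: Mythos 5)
Your proposal is correct and matches the paper's intent exactly: the paper states this as an immediate corollary of Lemma \ref{l_unique} (whose proof contains the critical-point relation you start from), and your two steps—rescaling by $t_n^2$ to get \eqref{unique_local}, then substituting into the definition of $R_n$ with the cancellation $(1-q)-(2-q)=-1$—are precisely the intended argument.
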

	
	In what follows we are going to prove that $\lambda ^\ast $ is attained and $\lambda ^\ast>0.$ In order to do that, we are going to first prove a technical but not less important result.
	\begin{lemma}\label{l_difzero}
		There is $\eta >0$ such that $\| t_n(u) u \|_V \geq \eta $ for all $u \in X\setminus \{ 0 \}.$
	\end{lemma}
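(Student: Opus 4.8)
The plan is to exploit the exact characterization of $t_n(u)$ recorded in Corollary \ref{c_identidade}. Writing $w := t_n(u)u$, which is nonzero because $t_n(u)>0$ and $u\neq 0$, identity \eqref{unique_local} reads
\begin{equation*}
	(2-q)\|w\|_V^2 = \int_{\mathbb{R}^N} b(x)\left(f'(w)w^2 + (1-q)f(w)w\right)\dx.
\end{equation*}
The strategy is to bound the right-hand side from above by a \emph{superquadratic} expression in $\|w\|_V$, and then to use $2-q>0$ together with $p>2$ to extract a uniform positive lower bound for $\|w\|_V = \|t_n(u)u\|_V$. No contradiction or compactness argument is needed; the estimate will be entirely quantitative.

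For the upper estimate, note that since $1<q<2$ and $f(w)w\geq 0$ by \ref{f_dois}, the integrand is controlled pointwise by $|f'(w)|w^2 + (q-1)|f(w)||w|$. Applying the growth inequalities \eqref{condicao_crescimento}, namely $|f'(t)|\leq \varepsilon + C_\varepsilon|t|^{p-2}$ and $|f(t)|\leq \varepsilon|t| + C_\varepsilon|t|^{p-1}$, and using $q-1<1$, I obtain a bound of the form $f'(w)w^2 + (1-q)f(w)w \leq C_1\varepsilon\, w^2 + C_1 C_\varepsilon|w|^p$ with $C_1$ independent of $u$. Integrating against $b(x)$ and invoking the continuous embeddings $\|\cdot\|_{2,b},\,\|\cdot\|_{p,b}\leq C\|\cdot\|_V$ from Proposition \ref{p_imersaopeso} yields
\begin{equation*}
	(2-q)\|w\|_V^2 \leq C_2\varepsilon\|w\|_V^2 + C_2 C_\varepsilon\|w\|_V^p .
\end{equation*}

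To finish, I fix $\varepsilon>0$ small enough that $C_2\varepsilon \leq (2-q)/2$, so the quadratic term is absorbed into the left-hand side, leaving $\tfrac{2-q}{2}\|w\|_V^2 \leq C_2 C_\varepsilon\|w\|_V^p$. Dividing by $\|w\|_V^2>0$ and using $p-2>0$ gives $\|w\|_V^{p-2}\geq (2-q)/(2 C_2 C_\varepsilon)$, hence
\begin{equation*}
	\|w\|_V \geq \eta := \left(\frac{2-q}{2 C_2 C_\varepsilon}\right)^{1/(p-2)} > 0 .
\end{equation*}
The only point requiring care is that all constants $C_1,C_2,\varepsilon,C_\varepsilon$ be chosen uniformly in $u$; this is automatic, since the constants in \eqref{condicao_crescimento} and the embedding constants of Proposition \ref{p_imersaopeso} do not depend on $u$. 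Thus $\eta$ depends only on the fixed data $q,p,b$ and the embedding constants, which is exactly the claimed uniform lower bound.
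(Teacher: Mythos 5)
Your proof is correct and follows essentially the same route as the paper: both start from the identity \eqref{unique_local} for $w=t_n(u)u$, apply the growth bounds \eqref{condicao_crescimento} together with the embedding $H^s_V(\mathbb{R}^N)\hookrightarrow L^\theta_b(\mathbb{R}^N)$ of Proposition \ref{p_imersaopeso}, and choose $\varepsilon$ small to absorb the quadratic term and extract a lower bound on $\|w\|_V^{p-2}$. Your write-up is in fact slightly more careful than the paper's (e.g.\ noting that $(1-q)f(w)w\le 0$ and that all constants are uniform in $u$), but the argument is the same.
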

	\begin{proof}
		Denoting $v = t_n(u)u,$ by \eqref{unique_local}, we have
		\begin{equation*}
			(2-q)\| v\|_V^2 = \int _{\mathbb{R}^N  }    b(x)  \left(  f'(v ) v^2 + (1-q) f(v) v  \right)\dx.
		\end{equation*}
		Now using \eqref{condicao_crescimento} we obtain
		\begin{equation*}
			(2-q)\| v\|_V^2 \leq 2 \varepsilon \mathcal{C}_2 \| v \| _V^2 + 2 C_\varepsilon \mathcal{C}_p \| v \|^p_V,
		\end{equation*}
		where $\mathcal{C}_2$ and $\mathcal{C}_p$ are suitable positive constants derived from the embedding $H^s _V(\mathbb{R}^N) \hookrightarrow L^\theta _b(\mathbb{R}^N),$ $2 \leq \theta < 2_s ^\ast.$ Next, choosing $\varepsilon$ small enough we conclude that $\| v \|^{p-2}_V \geq (2C_\varepsilon C_p)^{-1} \left[ (2-q)-2\varepsilon C_\varepsilon  \right].$
	\end{proof}
	Let $(v_k) \subset X \setminus \{ 0 \}$ be a sequence given by $v_k := t_n(u_k)u_k,$ with $(u_k) \subset X \setminus \{ 0  \},$ in a such way that $\Lambda _n (u_k) \rightarrow \lambda ^\ast.$

	\begin{lemma}\label{l_boundedbis}
		Assume that $\lambda \in (0, \lambda^*)$. Then the sequence $(v_k)$ is bounded in $X.$
	\end{lemma}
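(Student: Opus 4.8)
The plan is to argue by contradiction, exploiting that each $v_k$ is precisely the maximizer of the fibering map along its own ray, together with the superlinearity in \ref{f_dois}. Suppose $(v_k)$ were unbounded; passing to a subsequence, put $\rho_k:=\|v_k\|_V\to\infty$ and $w_k:=v_k/\rho_k$, so $\|w_k\|_V=1$. By reflexivity and the compact embeddings of Corollary \ref{c_compactA} and Proposition \ref{p_imersaopeso}, along a further subsequence $w_k\rightharpoonup w$ in $X$, $w_k\to w$ in $L^q_a(\mathbb{R}^N)$ and in $L^\theta_b(\mathbb{R}^N)$ for $2\le\theta<2^\ast_s$, and $w_k\to w$ a.e. Since $v_k=t_n(u_k)u_k$ is the unique maximizer of $t\mapsto R_n(tu_k)$ and $\Lambda_n$ is $0$--homogeneous (Remark \ref{zero}), one has $R_n(v_k)=\Lambda_n(u_k)\to\lambda^\ast$ and $t_n(w_k)=\rho_k$. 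Moreover $0\le\lambda^\ast<\infty$ (positivity of the maximal value comes from \ref{f_quatro} and Corollary \ref{c_identidade}); this finiteness is in fact the only property of $\lambda^\ast$ I will use, the hypothesis $\lambda\in(0,\lambda^\ast)$ serving here only to guarantee that the range is nonempty.

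Using $\|w_k\|_V=1$ I would first recast the quotient as
\begin{equation*}
R_n(v_k)=\frac{\rho_k^{2-q}}{\|w_k\|_{q,a}^q}\left[\,1-\int_{\mathbb{R}^N}b(x)\frac{f(\rho_k w_k)}{\rho_k w_k}\,w_k^2\dx\right],
\end{equation*}
where the integrand is nonnegative by \ref{f_dois} and $b\ge1$. If $w\neq0$, then on the set $\{w\neq0\}$ of positive measure $|\rho_k w_k|\to\infty$, so $f(\rho_k w_k)/(\rho_k w_k)\to+\infty$ by \ref{f_dois}, and Fatou's lemma forces $\int_{\mathbb{R}^N}b\,\frac{f(\rho_k w_k)}{\rho_k w_k}w_k^2\dx\to+\infty$. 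Since $\|w_k\|_{q,a}^q\le\|a\|_{\alpha_0}\|w_k\|_p^q$ stays bounded above while $\rho_k^{2-q}\to\infty$, the prefactor tends to $+\infty$ and the bracket to $-\infty$, whence $R_n(v_k)\to-\infty$, contradicting $R_n(v_k)\to\lambda^\ast\ge0$.

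There remains the vanishing case $w=0$, which I expect to be the main obstacle, since the expression above degenerates into a $0/0$ form ($\|w_k\|_{q,a}^q\to0$). Here I would instead test the fibering map at a fixed height: setting $g_k(t):=\|w_k\|_{q,a}^qR_n(tw_k)=t^{2-q}-t^{1-q}\int_{\mathbb{R}^N}b(x)f(tw_k)w_k\dx$, its maximum over $t>0$ is attained at $t=\rho_k$ and equals $\|w_k\|_{q,a}^q\Lambda_n(u_k)\to0$, because $w_k\to0$ in $L^q_a$ and $\lambda^\ast<\infty$. On the other hand, the growth bound \eqref{condicao_crescimento} together with $w_k\to0$ in $L^2_b$ and $L^p_b$ yields $\int_{\mathbb{R}^N}b f(w_k)w_k\dx\to0$, so $g_k(1)=1-\int_{\mathbb{R}^N}b f(w_k)w_k\dx\to1$. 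Then $0=\lim_k\max_{t>0}g_k(t)\ge\lim_k g_k(1)=1$, a contradiction. As both cases are impossible, the minimizing sequence $(v_k)$ must be bounded in $X$. The crux is precisely this resolution of the vanishing case, by comparing the asymptotically null maximal height of the fibering map with its value at $t=1$.
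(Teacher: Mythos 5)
Your proof is correct. Its skeleton is the same as the paper's: argue by contradiction, normalize $w_k=v_k/\|v_k\|_V$, split according to whether the weak limit $w$ vanishes, use Fatou's lemma in the non-vanishing case and the compact embeddings of Proposition \ref{p_imersaopeso} and Corollary \ref{c_compactA} in the vanishing case. Your vanishing case is in fact the paper's argument verbatim up to a harmless rescaling by $\|w_k\|_{q,a}^q$: your inequality $g_k(1)\le\max_{t>0}g_k(t)$ is exactly the paper's $R_n(w_k)\le R_n(v_k)\le\lambda^*+1/k$, i.e. $1-\int_{\mathbb{R}^N}b(x)f(w_k)w_k\dx\le(\lambda^*+1/k)\|w_k\|_{q,a}^q$, with both sides handled by the same compactness. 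The genuine difference is the non-vanishing case. The paper divides the stationarity identity of Corollary \ref{c_identidade} by $\|v_k\|_V^2$ to get $(2-q)=\int_{\mathbb{R}^N}b(x)\bigl[f'(v_k)+(1-q)f(v_k)/v_k\bigr]w_k^2\dx$ and derives the contradiction from Fatou together with the divergence of $H(t)=f'(t)+(1-q)f(t)/t$ (Remark \ref{r_aha}, which rests on \ref{f_tres}); you instead evaluate the quotient itself, $R_n(v_k)=\rho_k^{2-q}\|w_k\|_{q,a}^{-q}\bigl[1-\int_{\mathbb{R}^N}b(x)\frac{f(\rho_kw_k)}{\rho_kw_k}w_k^2\dx\bigr]$, and apply Fatou using only the superlinearity in \ref{f_dois} to force $R_n(v_k)\to-\infty$, against $R_n(v_k)=\Lambda_n(u_k)\to\lambda^*\ge0$, the sign being supplied by \ref{f_quatro} through Corollary \ref{c_identidade}. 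Your variant is slightly more economical in this step, since it bypasses \ref{f_tres} and Remark \ref{r_aha}, and your side observation is also accurate: the hypothesis $\lambda\in(0,\lambda^*)$ is never used beyond knowing $0\le\lambda^*<\infty$, which holds in any case because $\Lambda_n$ is finite and positive at every nonzero point. Both routes need the same compactness input and yield the same conclusion.
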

	\begin{proof}
		The proof follows arguing by contradiction. Assume that $\|v_k\|_V \to \infty,$ as $k \to \infty$. 	By the definition of $\Lambda_n,$ up to a subsequence, we can write $\Lambda _n (u_k) < \lambda ^\ast + 1/k$ and
		\begin{equation*}
			\| v_k \|_V^2 \leq \int _{\mathbb{R}^N}b(x) f(v_k) v_k  \dx + \left( \lambda ^\ast + \frac{1}{k}  \right)\| v_k \|_{q,a}^q.
		\end{equation*}	
		By Corollary \ref{c_identidade}, we obtain
		\begin{equation*}
			(2 -q)\|v_k\|^2_V = \int _{\mathbb{R}^N}b(x) \left( f'(v_k)v_k^2 + (1 -q) f(v_k)v_k \right)  \dx .
		\end{equation*}
		As a consequence, 
		\begin{equation*}
			(2 -q) = \int _{\mathbb{R}^N}b(x) \left[f'(v_k) + (1 -q) \frac{f(v_k)}{v_k}\right] w_k^2 \dx,
		\end{equation*}
		where $w_k = v_k/\|v_k\|_V$. Notice also that $(w_k)$ is bounded in $X,$ proving the existence of $w \in X$ such that $w_k \rightharpoonup w$ in $X,$ up to a subsequence.
		
		At this stage, we shall split the proof into cases. In the first one we assume $w \neq 0$, that is, the set $[w \neq 0] = \{ x \in \mathbb{R}^N : w(x) \neq 0\}$ has positive Lebesgue measure. Therefore, $|v_k(x)| \rightarrow \infty$ a. e. in the set $[w \neq 0],$ as $k \to \infty$. Now, by using hypothesis \ref{f_tres} (see Remark \ref{crucial}) and taking into account Fatou's Lemma, we infer that 
		\begin{align*}
			(2 -q) &= \liminf_{k \to \infty} \int _{\mathbb{R}^N}b(x) \left[f'(v_k) + (1 -q) \frac{f(v_k)}{v_k}\right] w_k^2 \dx  \\
			&\geq\int _{\mathbb{R}^N} b(x) \liminf_{k \to \infty} \left[f'(v_k) + (1 -q) \frac{f(v_k)}{v_k}\right] w_k^2 \dx  \\
			&\geq\int _{[w \neq 0]} b(x) \liminf_{k \to \infty}  \left[f'(v_k) + (1 -q) \frac{f(v_k)}{v_k}\right] w_k^2 \dx = + \infty .
		\end{align*}
		This is a contradiction proving that $w \neq 0$ is impossible. This finishes the proof for the first case.
		In the second case we shall assume that $w = 0$, that is, $w_k \rightharpoonup 0$ in $X$. It is not hard to see that 
		\begin{equation*}
			R_n(tv_k) \leq R_n(v_k) = \Lambda_n(v_k) \leq \lambda^* + 1/k, \quad t > 0.
		\end{equation*}
		Using the last assertion with $t = 1/\|v_k\|$ we obtain
		\begin{equation}\label{ale1}
			1 \leq  \int _{\mathbb{R}^N}b(x) f(w_k)w_k dx + \left(\lambda^* + 1/k\right) \|w_k\|_{q,a}^q.
		\end{equation}
		On the other hand, by using hypotheses \ref{f_um} and \ref{f_dois} together with the compact embedding of $X$ into $L^p_b(\mathbb{R}^N)$ and $L^q_a(\mathbb{R}^N)$ give in Proposition \ref{p_imersaopeso}, we know that 
		\begin{equation}\label{ale2}
			\|w_k\|_{q,a}^q \rightarrow 0\quad\text{and}\quad  \int _{\mathbb{R}^N}b(x) f(w_k)w_k \dx \rightarrow  0, \quad \text{as} \ k \rightarrow \infty. 
		\end{equation}
		As a consequence, by using \eqref{ale1} and \eqref{ale2}, we also infer
		\begin{equation*}
			1 \leq  \int _{\mathbb{R}^N}b(x) f(w_k)w_k dx + \left(\lambda^* + 1/k\right) \|w_k\|_{q,a}^q \rightarrow 0, \quad \text{as} \ k \rightarrow \infty. 
		\end{equation*} 
		This contradiction proves that $(v_k)$ is bounded in $X$.
	\end{proof}

	\begin{lemma}\label{l_liminf}
		If $v_k \rightharpoonup v$ in $X,$ then $\Lambda_n (v) \leq \liminf_{k \rightarrow \infty} \Lambda_n (v_k).$
	\end{lemma}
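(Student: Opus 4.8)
The plan is to exploit the description of $\Lambda_n$ as a supremum of fibering values. By Lemma \ref{l_unique} the map $t \mapsto R_n(tu)$ attains its maximum at the unique point $t_n(u)$, so that
\begin{equation*}
\Lambda_n(u) = R_n(t_n(u)u) = \sup_{t>0} R_n(tu), \quad u \in X \setminus \{0\},
\end{equation*}
as already used in Remark \ref{zero}. This variational characterization is the engine of the argument: it lets me estimate $\Lambda_n(v_k)$ from below by testing against a single, $k$-independent dilation parameter, thereby avoiding any attempt to control the moving maximizers $t_n(v_k)$.

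Concretely, I would fix $t_0 := t_n(v)$ (so that $\Lambda_n(v) = R_n(t_0 v)$, which presupposes $v \neq 0$, as the statement requires). Since each $\Lambda_n(v_k)$ is a supremum over all $t>0$, I obtain the pointwise-in-$k$ lower bound
\begin{equation*}
\Lambda_n(v_k) = \sup_{t>0} R_n(t v_k) \geq R_n(t_0 v_k), \qquad k \in \mathbb{N}.
\end{equation*}
Passing to $\liminf_k$ then reduces the whole claim to showing $\liminf_k R_n(t_0 v_k) \geq R_n(t_0 v)$.

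To establish this I would treat the three ingredients of $R_n(t_0 v_k) = \|t_0 v_k\|_{q,a}^{-q}\bigl[\|t_0 v_k\|_V^2 - \int_{\mathbb{R}^N} b(x) f(t_0 v_k)(t_0 v_k)\dx\bigr]$ separately, according to their continuity properties under weak convergence. The denominator $\|t_0 v_k\|_{q,a}^q = t_0^q\|v_k\|_{q,a}^q$ converges to the positive limit $\|t_0 v\|_{q,a}^q$ by the compact embedding $X \hookrightarrow L^q_a(\mathbb{R}^N)$ of Corollary \ref{c_compactA}. The nonlinear term $\int_{\mathbb{R}^N} b(x) f(t_0 v_k)(t_0 v_k)\dx$ converges to $\int_{\mathbb{R}^N} b(x) f(t_0 v)(t_0 v)\dx$ by the compact embeddings $X \hookrightarrow L^2_b(\mathbb{R}^N)$, $L^p_b(\mathbb{R}^N)$ of Proposition \ref{p_imersaopeso} combined with the growth estimate \eqref{condicao_crescimento}, via the standard continuity of the associated Nemytskii operator. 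The quadratic term $\|t_0 v_k\|_V^2 = t_0^2\|v_k\|_V^2$ is only weakly lower semicontinuous, so I keep it inside a $\liminf$, namely $\liminf_k \|t_0 v_k\|_V^2 \geq \|t_0 v\|_V^2$. Combining a convergent positive denominator, a convergent nonlinear term, and a lower-semicontinuous quadratic term yields
\begin{equation*}
\liminf_k R_n(t_0 v_k) \geq \frac{\|t_0 v\|_V^2 - \int_{\mathbb{R}^N} b(x) f(t_0 v)(t_0 v)\dx}{\|t_0 v\|_{q,a}^q} = R_n(t_0 v) = \Lambda_n(v),
\end{equation*}
which is precisely the assertion.

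The subtle point is the sign asymmetry between the terms. Weak convergence supplies only lower semicontinuity for $\|\cdot\|_V^2$, the "wrong" direction in general, but here this term sits with a plus sign in the \emph{numerator} of $R_n$, so lower semicontinuity pushes $\liminf_k R_n(t_0 v_k)$ \emph{up}, in our favor; the terms that could work against us, the concave $L^q_a$ denominator and the superlinear nonlinear term, are exactly the ones that converge strongly by compactness. The main obstacle to watch is therefore non-degeneracy of the quotient: one must confirm $v \neq 0$ and that $\|v\|_{q,a} > 0$ so that the limiting denominator is strictly positive, both of which hold in the minimizing setting in which this lemma will be applied.
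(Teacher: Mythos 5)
Your proposal is correct and takes essentially the same route as the paper's own (much terser) proof, which invokes exactly the same three ingredients: strong convergence of $\|v_k\|_{q,a}^q$ via Corollary \ref{c_compactA}, convergence of the nonlinear term via Proposition \ref{p_imersaopeso} together with the growth conditions on $f$, and weak lower semicontinuity of $\|\cdot\|_V$, with the reduction through the characterization $\Lambda_n(u)=\sup_{t>0}R_n(tu)$ left implicit. Your write-up merely fills in the details the paper omits, including the pertinent observation that $v\neq 0$ and $\|v\|_{q,a}>0$ are needed for the limiting quotient to make sense.
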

	\begin{proof}
		By Proposition \ref{p_imersaopeso}, clearly $v_k\rightarrow v$ in $L_b ^\theta (\mathbb{R}),$ for all $2 \leq \theta < 2_s ^\ast.$ By Corollary \ref{c_compactA}, we have $\|v_k\|_{q,a}^q\rightarrow \| v \| ^q_{q,a}.$ The growth conditions on $f$ together with the fact that $\| \cdot \|_V $ is weakly lower semicontinuous in $X$ leads to the desired conclusion.
	\end{proof}
	\begin{proposition}\label{as}
		Up to a subsequence, there is $v_0 \in X \setminus \{ 0 \}$ such that $v_k \rightharpoonup v_0$ in $X$ and $\Lambda _n (v_0) = \lambda ^\ast.$ In particular $0 < \lambda ^\ast <  +\infty.$
	\end{proposition}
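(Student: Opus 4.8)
The plan is to extract a weakly convergent subsequence from the minimizing sequence $(v_k)$, to prove that its weak limit $v_0$ is nontrivial, and then to use the weak lower semicontinuity of $\Lambda_n$ to conclude that $\lambda^\ast$ is attained at $v_0$. First I would record that $\lambda^\ast$ is well defined and finite: by Lemma \ref{l_unique} each fibering map satisfies $q_n(t)>0$ for small $t>0$, so $\Lambda_n(u)=\max_{t>0}R_n(tu)>0$ for every $u\in X\setminus\{0\}$; consequently $0\le\lambda^\ast\le\Lambda_n(u)<\infty$ for any fixed $u\neq 0$. Since $(u_k)$ is chosen so that $\Lambda_n(u_k)\to\lambda^\ast$ and $v_k=t_n(u_k)u_k$, Lemma \ref{l_boundedbis} (whose proof uses only the minimizing property and the finiteness of $\lambda^\ast$, not the specific $\lambda$) guarantees that $(v_k)$ is bounded in $X$. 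Hence, up to a subsequence, $v_k\rightharpoonup v_0$ in $X$ for some $v_0\in X$.

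The decisive step is to show $v_0\neq 0$, and I expect this to be the main obstacle. I would argue by contradiction, assuming $v_k\rightharpoonup 0$. Since each $v_k=t_n(u_k)u_k$ is the critical point of the fibering map, Corollary \ref{c_identidade} supplies the identity $(2-q)\|v_k\|_V^2=\int_{\mathbb{R}^N}b(x)\big(f'(v_k)v_k^2+(1-q)f(v_k)v_k\big)\dx$. Estimating the integrand in absolute value by the growth condition \eqref{condicao_crescimento}, the right-hand side is controlled by $C(\varepsilon\|v_k\|_{2,b}^2+C_\varepsilon\|v_k\|_{p,b}^p)$. By the compactness of the embeddings of $X$ into $L^2_b(\mathbb{R}^N)$ and $L^p_b(\mathbb{R}^N)$ established in Proposition \ref{p_imersaopeso}, the hypothesis $v_k\rightharpoonup 0$ forces $\|v_k\|_{2,b}\to 0$ and $\|v_k\|_{p,b}\to 0$, whence $\|v_k\|_V\to 0$. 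This contradicts the uniform lower bound $\|v_k\|_V\ge\eta>0$ provided by Lemma \ref{l_difzero}. Therefore $v_0\neq 0$.

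It remains to identify $\Lambda_n(v_0)$ with $\lambda^\ast$. Because $v_k=t_n(u_k)u_k$ is a positive multiple of $u_k$, the $0$--homogeneity of $\Lambda_n$ (Remark \ref{zero}) gives $\Lambda_n(v_k)=\Lambda_n(u_k)\to\lambda^\ast$. Applying Lemma \ref{l_liminf} along $v_k\rightharpoonup v_0$ yields $\Lambda_n(v_0)\le\liminf_{k}\Lambda_n(v_k)=\lambda^\ast$, while $v_0\in X\setminus\{0\}$ forces $\Lambda_n(v_0)\ge\inf_{u\neq 0}\Lambda_n(u)=\lambda^\ast$. Thus $\Lambda_n(v_0)=\lambda^\ast$, so the infimum is attained at the nontrivial point $v_0$. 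Finally, since $v_0\neq 0$, the positivity noted in the first paragraph gives $\lambda^\ast=\Lambda_n(v_0)>0$, and finiteness has already been established; hence $0<\lambda^\ast<\infty$, which completes the proof.
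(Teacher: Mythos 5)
Your proof is correct, and its skeleton matches the paper's: boundedness of $(v_k)$ from Lemma \ref{l_boundedbis}, a nonzero weak limit from Lemma \ref{l_difzero}, and the sandwich $\lambda^\ast \leq \Lambda_n(v_0) \leq \liminf_k \Lambda_n(v_k) = \lambda^\ast$ from Lemma \ref{l_liminf} together with the $0$--homogeneity of $\Lambda_n$ (Remark \ref{zero}). There are two places where you genuinely diverge. First, the paper disposes of the nontriviality of $v_0$ in one line, merely citing Lemmas \ref{l_difzero} and \ref{l_boundedbis}; your contradiction argument --- if $v_k \rightharpoonup 0$, then Corollary \ref{c_identidade}, the growth condition \eqref{condicao_crescimento} and the compact embeddings of Proposition \ref{p_imersaopeso} force $\|v_k\|_V \to 0$, against the uniform bound $\|v_k\|_V \geq \eta$ --- is precisely the detail the paper leaves implicit, and it is needed, since a norm lower bound alone never rules out a zero weak limit. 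Second, for the positivity $\lambda^\ast > 0$ the paper invokes hypothesis \ref{f_quatro}: writing $w_0 = t_n(v_0)v_0$, Corollary \ref{c_identidade} gives
\begin{equation*}
\lambda^\ast = R_n(w_0) = \frac{1}{(2-q)\|w_0\|^q_{q,a}} \int_{\mathbb{R}^N} b(x)\left( f'(w_0)w_0^2 - f(w_0)w_0 \right) \dx > 0,
\end{equation*}
the strict inequality coming from $f'(t)t^2 - f(t)t > 0$ for $t \neq 0$. You avoid \ref{f_quatro} entirely by observing that $q_n(t) > 0$ for small $t > 0$ (Lemma \ref{l_unique}), hence $\Lambda_n(u) > 0$ for every $u \neq 0$, and then --- crucially, and you handle this correctly --- using the attainment of the infimum at $v_0 \neq 0$ to convert pointwise positivity of $\Lambda_n$ into $\lambda^\ast = \Lambda_n(v_0) > 0$ (pointwise positivity alone would not suffice). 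Your route is more elementary; the paper's route produces an explicit positive expression for $\lambda^\ast$ and showcases the role of \ref{f_quatro}. Both arguments are sound.
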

	\begin{proof}
		By Lemmas \ref{l_difzero} and \ref{l_boundedbis}, up to a subsequence, $v_k \rightharpoonup v_0$ in $X,$ for some $v_0 \in X \setminus \{ 0\}.$ Lemma \ref{l_liminf} implies
		\begin{equation*}
			\lambda ^\ast \leq \Lambda_n (v_0) \leq \liminf _{k \rightarrow \infty} \Lambda _n (v_k) = \lambda ^\ast,
		\end{equation*}
		which also means that $\lambda ^\ast <  + \infty.$ Nevertheless, denoting $w_0 = t(v_0) v_0,$ hypotheses \ref{f_um}, \ref{f_quatro} and Corollary \ref{c_identidade} lead to
		\begin{equation*}
			\lambda ^\ast =  \Lambda _n (v_0) = R_n (w_0) = \frac{1}{(2-q) \|w_0 \|^q_{q,a} } \int _{\mathbb{R}^N } b(x)\left(f'(w_0) w_0^2 - f(w_0) w_0\right) \dx >0.\qedhere
		\end{equation*}
	\end{proof}
	
	\subsection{A second extremal value}
	In order to classify the signal of the energy functional $J_\lambda$ for weak solutions of \eqref{P} we consider a new nonlinear generalized Rayleigh quotient type functional, which we define  as $R_e : X \setminus \{0\} \to \mathbb{R}$ with
	\begin{equation*}
		R_e(u) = \dfrac{q}{ \|u\|_{q,a}^q} \left[\frac{\|u\|_V^2}{2} - \int_{\mathbb{R}^N} b(x)F(u) \dx \right].
	\end{equation*}
	\begin{remark}\label{cl}
		$R_e(u) = \lambda$ if, and only if $J_\lambda(u) = 0$; $R_e(u) < \lambda$ if, and only if $J_\lambda(u) < 0$; $R_e(u) > \lambda$ if, and only if $J_\lambda(u) > 0$.
	\end{remark}

	Consequently, following the same discussion made about $R_n$, the necessity of an analysis for the function $q_e(t) = R_e(tu)$ with $u \in X \setminus \{0\}$ and $t \geq 0,$ is presented. We start by noticing that $R_e \in C^1(X \setminus \{0\}, \mathbb{R}).$ Also, by using hypotheses \ref{f_um} and \ref{f_dois} (see \eqref{condicao_crescimento}) together with Fatou's Lemma we have
	\begin{equation}\label{eq_sobedesce}
		\lim_{t \to 0^+} \dfrac{q_e(t)}{t^{2 - q}} =\frac{q}{2} \frac{\| u \|^2_V}{\|u \|^q_{q,a}} >0\quad \text{and}\quad \lim_{t \to \infty} \dfrac{q_e(t)}{t^{2 - q}} = - \infty. 
	\end{equation}
	As a result, there exists $t_e(u) > 0$ such that $q_e(t_e) = \max_{t > 0} q_e(t)$. By definition,
	\begin{equation*}
		R_e(tu) = \frac{q}{\|u\|_{q,a}^q} \left[  \frac{\| u \|^2_V}{2}t^{2-q} - \int _{\mathbb{
				R}^N } b(x) F(tu)t^{-q}\dx \right], \quad u \in X \setminus \{0\}.
	\end{equation*}
	In particular, we obtain that 
	\begin{align*}
		0 	&= \dfrac{\rm d}{\dt} R_e(tu) {\Big|_{t=t_e(u)}} \\
		&= \frac{q}{\| u \|^q _{q,a}} \left[ \frac{2-q}{2}\|u\|_V^2 t_e^{1-q} - \int_{\mathbb{R}^N} b(x)\left( f(t_e u)u  t_e^{-q}   -q F(t_eu) t_e^{-q-1} \right)\dx \right] ,
	\end{align*}
	where $t_e = t_e(u).$ Hence
	\begin{align}\label{aie}
		\frac{(2 -q)}{2}\|u\|_V^2 &= \int_{\mathbb{R}^N} b(x)\left(-q t_e(u)^{-2}F(t_e(u)u) + t_e(u)^{-1}f(t_e(u)u)u \right)\dx
		\nonumber \\
		&= \int_{\mathbb{R}^N} b(x)\left[\frac{f(t_e(u)u)}{t_e(u)u}  -q \frac{F(t_e(u)u)}{(t_e(u)u)^2}  \right] u^2\dx.
	\end{align}
	The next result is used to prove that $t_e(u)$ is unique determined for each $u \in X \setminus \{0\}$.	
	\begin{lemma}\label{crucial}
		Suppose \ref{f_um}--\ref{f_tres}. The function $G: \mathbb{R} \to \mathbb{R}$ given by 
		\begin{equation*}
			G(t) = \frac{f(t)}{t} - q \frac{F(t)}{t^2},
		\end{equation*}
		is increasing for $t >  0,$ and decreasing for $t<0.$ Furthermore, $\lim_{|t| \to \infty} G(t) = + \infty.$
	\end{lemma}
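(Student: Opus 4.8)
The plan is to reduce everything to the auxiliary function $H$ from Remark \ref{r_aha} and exploit its monotonicity. Writing $\Phi(t) := f(t)t - qF(t)$, I observe that $G(t) = \Phi(t)/t^2$, that $\Phi(0) = 0$, and that $\Phi$ is $C^1$ with
\begin{equation*}
\Phi'(t) = f'(t)t + (1-q)f(t) = t\,H(t),\qquad H(t) = f'(t) + (1-q)\frac{f(t)}{t}.
\end{equation*}
Differentiating the quotient gives $G'(t) = \big(t^2 H(t) - 2\Phi(t)\big)/t^3$, so the whole monotonicity question is reduced to controlling the sign of the numerator $N(t) := t^2 H(t) - 2\Phi(t)$.

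The key step, which I expect to be the main obstacle, is an averaging identity for $N(t)$. Using the fundamental theorem of calculus (the integrand below being the continuous function $\Phi'$) I would write $\Phi(t) = \int_0^t \tau H(\tau)\dtau$ and $t^2 = \int_0^t 2\tau\dtau$, whence
\begin{equation*}
N(t) = H(t)\int_0^t 2\tau\dtau - 2\int_0^t \tau H(\tau)\dtau = 2\int_0^t \tau\big(H(t) - H(\tau)\big)\dtau.
\end{equation*}
For $t > 0$, hypothesis \ref{f_tres} says $H$ is increasing on $(0,\infty)$, so $H(t) - H(\tau) \geq 0$ for $0 < \tau < t$ while $\tau > 0$; the integrand is nonnegative and strictly positive on a set of positive measure, hence $N(t) > 0$ and $G'(t) = N(t)/t^3 > 0$. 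For $t < 0$ I would track signs carefully: $H$ is decreasing on $(-\infty,0)$, so for $t < \tau < 0$ one has $H(t) - H(\tau) > 0$ while $\tau < 0$, making the product negative; since $\int_0^t = -\int_t^0$ reverses the sign, $N(t) > 0$ again, but now $t^3 < 0$, so $G'(t) < 0$. This yields the claimed monotonicity on each half-line.

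For the asymptotics, the difficulty is that $G(t) = f(t)/t - qF(t)/t^2$ is an $\infty - \infty$ indeterminacy, since both $f(t)/t$ and $F(t)/t^2$ tend to $+\infty$ by \ref{f_dois}. I would resolve it through the same representation $G(t) = t^{-2}\int_0^t \tau H(\tau)\dtau = \Phi(t)/t^2$ together with $\lim_{|t|\to\infty} H(t) = +\infty$ from Remark \ref{r_aha}. Applying the denominator form of L'H\^{o}pital's rule: since $t^2 \to +\infty$ and $\Phi'(t)/(t^2)' = tH(t)/(2t) = H(t)/2 \to +\infty$, I conclude $G(t) \to +\infty$ as $t \to +\infty$, and an identical computation handles $t \to -\infty$. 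Alternatively, for $t > 0$ one fixes $M$, chooses $T$ with $H \geq M$ on $[T,\infty)$, and estimates $G(t) \geq t^{-2}\big(\int_0^T \tau H(\tau)\dtau + \tfrac{M}{2}(t^2 - T^2)\big) \to M/2$, then lets $M \to \infty$.
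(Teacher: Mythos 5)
Your proof is correct, and it takes a genuinely different route from the paper's. Writing $\Phi(t)=tf(t)-qF(t)$, so that $G=\Phi(t)/t^2$, the paper never differentiates $G$: starting from $H(t)\geq H(\tau)$ for $t\geq\tau>0$ (hypothesis \ref{f_tres}), it multiplies by $t$ and integrates in $t$ to claim the pointwise bound $\Phi(t)\geq\tfrac{t^2}{2}H(\tau)$, then multiplies by $\tau$ and integrates in $\tau$ over $[0,\tau]$ to reach $\tfrac{\tau^2}{2}\Phi(t)\geq\tfrac{t^2}{2}\Phi(\tau)$, i.e.\ $G(t)\geq G(\tau)$; for the limit it differentiates $F(\tau)/\tau^{q}$, uses the monotonicity of $G$ just established, and invokes $F(t)/t^2\to\infty$. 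You instead compute $G'$ and use the exact representation $t^2H(t)-2\Phi(t)=2\int_0^t\tau\bigl(H(t)-H(\tau)\bigr)\dtau$, reading the sign directly from \ref{f_tres}, and you obtain the limit from $H(t)\to+\infty$ (Remark \ref{r_aha}) via L'H\^{o}pital or your averaging estimate. Your route buys something concrete here: the paper's intermediate bound discards the boundary contribution at the lower limit of integration, and as stated it is false --- its correct form is $\Phi(t)-\Phi(\tau)\geq H(\tau)\tfrac{t^2-\tau^2}{2}$, and the dropped term has the wrong sign since $\Phi(\tau)=\int_0^\tau sH(s)\,\mathrm{d}s\leq H(\tau)\tfrac{\tau^2}{2}$; e.g.\ for $f(t)=t^{p-1}$ with $p>2$, at $t=\tau$ the paper's display reads $\tfrac{p-q}{p}\tau^p\geq\tfrac{p-q}{2}\tau^p$, which fails. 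Your identity keeps all terms exactly, so your argument is the one that actually closes this step. Two minor remarks: your second, elementary proof of the limit is preferable to the L'H\^{o}pital version since it is self-contained; and Remark \ref{r_aha} is stated in the paper under \ref{f_um}--\ref{f_quatro} while the present lemma assumes only \ref{f_um}--\ref{f_tres}, so you should note explicitly (as is easily checked) that the proof of that remark never uses \ref{f_quatro}.
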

	\begin{proof}
		It follows from hypothesis \ref{f_tres} that 
		\begin{equation*}
			f'(t) + (1 -q) \frac{f(t)}{t} \geq f'(\tau) + (1 -q) \frac{f(\tau)}{\tau} ,\quad t \geq \tau > 0.
		\end{equation*}
		Hence 
		\begin{equation*}
			t f'(t) + (1 -q) f(t) \geq \left[f'(\tau) + (1 -q) \frac{f(\tau)}{\tau}\right]t ,\quad  t \geq \tau > 0.
		\end{equation*}
		Now, integrating by parts using the variable $t$, we observe that 
		\begin{equation*}
			t f(t) -q F(t) \geq \frac{t^2}{2} \left[f'(\tau) + (1 -q) \frac{f(\tau)}{\tau}\right],\quad  t \geq \tau > 0.
		\end{equation*}
		The last assertion implies that
		\begin{equation*}
			\tau \left(t f(t) -q F(t)  \right)  \geq \frac{t^2}{2}  \left(\tau f'(\tau) + (1 -q) f(\tau)\right),\quad t \geq \tau > 0.
		\end{equation*}
		Thus, integrating by parts in the set $[0,\tau ]$, we obtain
		\begin{equation*}
			\frac{\tau^2}{2} \left(t f(t) -q F(t) \right) \geq \frac{t^2}{2}  \left(\tau f(\tau) -q F(\tau)\right),\quad  t \geq \tau > 0.
		\end{equation*}
		Therefore, 
		\begin{equation*}
			G(t) = \frac{f(t)}{t} -q \frac{F(t)}{t^2} \geq G(\tau) = \frac{f(\tau)}{\tau} -q \frac{F(\tau)}{\tau^2} , \quad t \geq \tau > 0.
		\end{equation*}
		The proof for the case $t \leq \tau < 0$ follows the same ideas discussed just above. Next we prove that $\lim_{|t| \to \infty} G(t) = +\infty$. In fact, we observe that 
		\begin{equation*}
			\frac{\rm d}{\dtau} \left[ \frac{F(\tau)}{|\tau|^{q-2}\tau }  \right] = \frac{f(\tau)/\tau - q F(\tau)/\tau^2}{|\tau|^{q-2}\tau } = \frac{G(\tau)}{|\tau|^{q-2}\tau }.
		\end{equation*}
		The last assertion yields
		\begin{equation*}
			\frac{F(t)}{t^{q}} - \frac{F(t_0)}{t_0^{q}} = \int_{t_0}^{t} \frac{G(\tau)}{\tau^{q-1}} \dtau ,
		\end{equation*} 
		with $0 < t_0 < t$. Now, by using the fact that $t \mapsto G(t)$ is increasing for $t > 0$, we infer that
		\begin{equation*}
			\frac{F(t)}{t^{q}} - \frac{F(t_0)}{t_0^{q}} \leq G(t) \int_{t_0}^{t} \frac{1}{\tau^{q-1}} \dtau = G(t) \left( \frac{t^{2-q}}{2-q} - \frac{t_{0}^{2-q}}{2-q} \right)	\leq  G(t) \frac{t^{2-q}}{2-q}.	
		\end{equation*}
		Hence,
		\begin{equation*}
			\frac{F(t)}{t^2} \leq \frac{F(t_0)}{t_0^{q-1}} \frac{1}{t^{2-q}} + \frac{G(t)}{2 - q},\quad  0 < t_0 < t.
		\end{equation*}
		By using hypothesis \ref{f_dois} and the last estimate, we deduce that 
		$
		\lim_{t \to \infty} G(t) =  + \infty.
		$
		Furthermore, taking $t < t_0 < 0$ and using the same arguments described above, we have $\lim_{t \to -\infty} G(t) = +\infty.$		
	\end{proof}
	
	\begin{lemma}\label{l_uniqueE}
		Let $u \in X \setminus \{0 \}.$ Then $\lim _{t\rightarrow 0} q_e (t ) =0,$ $q_e(t) >0$ for small $t>0$ and $\lim _{t\rightarrow \infty } q_e (t ) =-\infty.$ Moreover, $t_e>0$ is the unique critical point of $q_e,$ with $q_e(t_e) = \max_{t > 0} q_e(t).$
	\end{lemma}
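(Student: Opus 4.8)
The plan is to read the three limit/sign statements straight off the asymptotics already recorded in \eqref{eq_sobedesce}, and to obtain both uniqueness of the critical point and its maximal character from a factored form of $q_e'$ together with the strict monotonicity of $G$ proved in Lemma \ref{crucial}. First I would settle the boundary behaviour. Since $1<q<2$ we have $2-q>0$, so $t^{2-q}\to 0^+$ as $t\to 0^+$ and $t^{2-q}\to +\infty$ as $t\to\infty$. Writing $q_e(t)=t^{2-q}\bigl(q_e(t)/t^{2-q}\bigr)$ and using the first relation in \eqref{eq_sobedesce}, namely $q_e(t)/t^{2-q}\to \tfrac{q}{2}\|u\|_V^2/\|u\|_{q,a}^q=:L>0$, I get $\lim_{t\to 0^+}q_e(t)=0$, and because $L>0$ and $t^{2-q}>0$ also $q_e(t)>0$ for all sufficiently small $t>0$. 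The second relation in \eqref{eq_sobedesce}, together with $t^{2-q}\to+\infty$, gives $\lim_{t\to\infty}q_e(t)=-\infty$.

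Since $q_e$ is continuous on $(0,\infty)$, assumes a positive value, and diverges to $-\infty$, while tending to $0$ at the left endpoint, it attains a strictly positive global maximum at some interior point $t_e>0$; in particular $q_e'(t_e)=0$. This disposes of existence; the central point is uniqueness of the critical point.

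Differentiating $q_e$ and reorganizing the computation behind \eqref{aie}, I would record the factored identity
\begin{equation*}
	q_e'(t)=\frac{q}{\|u\|_{q,a}^q}\,t^{1-q}\left[\frac{2-q}{2}\|u\|_V^2-\Psi(t)\right],\qquad \Psi(t):=\int_{\mathbb{R}^N} b(x)\,G\bigl(t\,u(x)\bigr)\,u(x)^2\dx,
\end{equation*}
valid for all $t>0$, where $G$ is the function from Lemma \ref{crucial}; the passage to $\Psi$ rests on the algebraic identity $t\,f(tu)u-qF(tu)=t^2 G(tu)u^2$. As $\tfrac{q}{\|u\|_{q,a}^q}t^{1-q}>0$ for $t>0$, the sign of $q_e'(t)$ equals that of $\tfrac{2-q}{2}\|u\|_V^2-\Psi(t)$, so it suffices to prove that $\Psi$ is strictly increasing on $(0,\infty)$: then $\tfrac{2-q}{2}\|u\|_V^2-\Psi(t)$ is strictly decreasing, $q_e'$ vanishes at most once, and in view of the interior maximiser this single zero is exactly $t_e$, with $q_e'>0$ before it and $q_e'<0$ after it, whence $q_e(t_e)=\max_{t>0}q_e(t)$. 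To see the strict monotonicity of $\Psi$, fix $0<t_1<t_2$ and a point $x$ with $u(x)\neq 0$: if $u(x)>0$ then $0<t_1u(x)<t_2u(x)$ and Lemma \ref{crucial} (increasing on the positive axis) gives $G(t_1u(x))<G(t_2u(x))$, while if $u(x)<0$ then $t_2u(x)<t_1u(x)<0$ and the decreasing behaviour of $G$ on $(-\infty,0)$ gives the same strict inequality. Multiplying by $u(x)^2>0$ and integrating against $b(x)\geq 1>0$ (hypothesis \ref{B_weight}) over $\{u\neq 0\}$, which has positive measure because $u\neq 0$, yields $\Psi(t_1)<\Psi(t_2)$.

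The step I expect to be the main obstacle is precisely this monotonicity of $\Psi$ in the presence of sign changes of $u$: although $t\mapsto t\,u(x)$ moves to the right where $u(x)>0$ and to the left where $u(x)<0$, the two monotonicity regimes of $G$ in Lemma \ref{crucial} are arranged exactly so that $t\mapsto G(t\,u(x))$ is increasing in both cases, after which positivity of $b$ and of $u^2$ upgrades this to strict monotonicity of $\Psi$ and forces the critical point to be unique.
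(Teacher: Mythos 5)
Your proposal is correct and follows essentially the same route as the paper: the boundary behaviour is read off \eqref{eq_sobedesce}, and uniqueness of the critical point comes from the critical-point identity \eqref{aie} combined with the monotonicity of $G$ from Lemma \ref{crucial}, i.e.\ strict monotonicity of $t \mapsto \int_{\mathbb{R}^N} b(x)G(tu)u^2\dx$. The only difference is that you spell out the pointwise case analysis on $\{u>0\}$ and $\{u<0\}$ (which the paper leaves implicit), a worthwhile clarification but not a different argument.
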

	\begin{proof}
		Let $t_e(u) > 0$ a critical point for $t \mapsto q_e(t)$. According to \eqref{aie}, we have
		\begin{equation*}
			\frac{(2 -q)}{2}\|u\|_V^2 = \int_{\mathbb{R}^N} b(x) G(t_e(u)u) u^2\dx,
		\end{equation*}
		where $G$ is defined in Lemma \ref{crucial}. Now, taking into account that $t \mapsto G(t)$ is increasing for each $t > 0$ and decreasing for $t < 0$, the equation
		\begin{equation*}
			\frac{(2 -q)}{2} \|u\|_V^2 = \int_{\mathbb{R}^N} b(x) G(t u) u^2\dx
		\end{equation*}
		has a unique solution $t = t_e(u) > 0$. The proof follows by \eqref{eq_sobedesce}.
	\end{proof}
	The behavior of $q_e$ at infinity also describes the function $t \mapsto J_\lambda (tu),$ $t>0,$ at infinity.
	\begin{corollary}\label{c_compj}
		$J_\lambda (t u) \rightarrow - \infty,$ as $t \rightarrow \infty,$ for any $u \in X\setminus \{ 0\}.$
	\end{corollary}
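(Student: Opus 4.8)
The plan is to reduce the statement to the asymptotics of $q_e$ already recorded in \eqref{eq_sobedesce}, through the quantitative form of Remark \ref{cl}.

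First I would express the energy along the ray $t \mapsto tu$ in terms of the quotient $R_e$. Unwinding the definitions of $J_\lambda$ and $R_e$ and using the homogeneities $\|tu\|_V^2 = t^2\|u\|_V^2$ and $\|tu\|_{q,a}^q = t^q\|u\|_{q,a}^q$, one obtains
\begin{equation*}
J_\lambda(tu) = \frac{\|u\|_{q,a}^q}{q}\, t^q\bigl(R_e(tu) - \lambda\bigr) = \frac{\|u\|_{q,a}^q}{q}\, t^q\bigl(q_e(t) - \lambda\bigr), \quad t > 0,
\end{equation*}
which is precisely the identity underlying Remark \ref{cl}.

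Next I would invoke \eqref{eq_sobedesce}. Since $1 < q < 2$ we have $2 - q > 0$, hence $t^{2-q} \to +\infty$; combined with $\lim_{t \to \infty} q_e(t)/t^{2-q} = -\infty$ this forces $q_e(t) \to -\infty$, and so $q_e(t) - \lambda \to -\infty$. Writing $t^q q_e(t) = t^2\bigl(q_e(t)/t^{2-q}\bigr)$, the factor $t^2 \to +\infty$ multiplied by $q_e(t)/t^{2-q} \to -\infty$ gives $t^q q_e(t) \to -\infty$, while $-\lambda t^q \to -\infty$; therefore $t^q\bigl(q_e(t) - \lambda\bigr) \to -\infty$. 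As $\|u\|_{q,a}^q/q > 0$ is a fixed positive constant, the displayed identity yields $J_\lambda(tu) \to -\infty$.

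The only delicate point is that $R_e(tu)$, and hence \eqref{eq_sobedesce}, presupposes $\|u\|_{q,a} > 0$; this is the main (though minor) obstacle. For those $u \in X \setminus \{0\}$ with $\|u\|_{q,a} = 0$ the concave term is absent and I would argue directly: since $\lim_{|t| \to \infty} F(t)/t^2 = +\infty$, writing $F(tu(x))/t^2 = u(x)^2\, F(tu(x))/(tu(x))^2$ shows that $F(tu)/t^2 \to +\infty$ pointwise on the positive measure set $[u \neq 0]$. Using $F \geq 0$ (a consequence of \ref{f_dois}), $b \geq 1$ from \ref{B_weight}, and Fatou's Lemma, we get $t^{-2}\int_{\mathbb{R}^N} b(x) F(tu)\dx \to +\infty$, so that $t^{-2}J_\lambda(tu) = \tfrac{1}{2}\|u\|_V^2 - t^{-2}\int_{\mathbb{R}^N} b(x) F(tu)\dx \to -\infty$, and again $J_\lambda(tu) \to -\infty$.
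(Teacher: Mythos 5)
Your proof is correct and follows essentially the same route as the paper: both reduce the claim to the asymptotics $q_e(t)/t^{2-q}\to-\infty$ from \eqref{eq_sobedesce}, via the identity $J_\lambda(tu)=\frac{t^q\|u\|_{q,a}^q}{q}\bigl(q_e(t)-\lambda\bigr)$ that underlies Remark \ref{cl}. Your separate treatment of the degenerate case $\|u\|_{q,a}=0$ is a genuine (if minor) improvement: the paper's proof tacitly assumes $\|u\|_{q,a}>0$, since $q_e$ is otherwise undefined and its final bound $J_\lambda(tu)<-\frac{2\lambda}{q}\|u\|_{q,a}^q t^q$ gives nothing when $\|u\|_{q,a}=0$, while hypothesis \ref{A1_weight} alone does not exclude nonzero $u$ supported where $a$ vanishes.
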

	\begin{proof}
		From \ref{f_dois} and Fatou's Lemma, we infer $\lim_{t \to \infty} q_e(t) = \lim_{t \to \infty}  t^{2-q}(q_e(t)/t^{2 - q}) = - \infty$. Then there is $t_0>0$ such that
		\begin{equation*}
			J_\lambda (t u ) <-2 \frac{\lambda }{q}\| u \|_{q,a}^q t^q,\quad \text{for }t>t_0.\qedhere
		\end{equation*}
	\end{proof}
	\begin{corollary}\label{cor1}
		For each $u \in X\setminus \{0 \}$, one have
		\begin{equation}\label{ul}
			\dfrac{(2-q)}{2}\| t_e(u) u \|_V^2 = \int _{\mathbb{R}^N } b(x) \left(f(t_e(u) u )(t_e(u) u )- q F(t_e(u)u) \right) \dx.
		\end{equation}
		In particular, by using \eqref{ul}, we also obtain
		\begin{equation*}
			R_e(t_e(u) u ) = \frac{q}{(2 - q)\|t_e(u) u \|^q_{q,a} } \int _{\mathbb{R}^N } b(x)\left(  f(t_e(u) u) (t_e(u) u) - 2 F(t_e(u) u ) \right) \dx > 0.
		\end{equation*}
	\end{corollary}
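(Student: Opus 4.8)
The plan is to read off \eqref{ul} from the critical point condition \eqref{aie}, substitute it into the definition of $R_e$, and finally establish strict positivity using \ref{f_quatro}.

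First I would set $t_e = t_e(u)$ and $v = t_e u$, where $t_e>0$ is the unique critical point of $q_e$ furnished by Lemma \ref{l_uniqueE}. Identity \eqref{aie} is precisely the equation $q_e'(t_e)=0$. Multiplying both sides of \eqref{aie} by $t_e^2$ and using the $2$--homogeneity of $\|\cdot\|_V^2$, namely $\|v\|_V^2 = t_e^2\|u\|_V^2$, together with $t_e f(t_e u)u = f(v)v$ and $F(t_e u)=F(v)$, transforms \eqref{aie} into
\[
\frac{(2-q)}{2}\|v\|_V^2 = \int_{\mathbb{R}^N} b(x)\left(f(v)v - q F(v)\right)\dx,
\]
which is exactly \eqref{ul}.

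Next, to obtain the expression for $R_e(v)$, I would insert \eqref{ul} into
\[
R_e(v) = \frac{q}{\|v\|_{q,a}^q}\left[\frac{\|v\|_V^2}{2} - \int_{\mathbb{R}^N} b(x) F(v)\dx\right].
\]
Using \eqref{ul} to replace $\|v\|_V^2/2$ by $(2-q)^{-1}\int_{\mathbb{R}^N} b(x)(f(v)v - qF(v))\dx$ and collecting the terms involving $F(v)$, the contributions $-qF(v)$ and $+qF(v)$ cancel, leaving
\[
R_e(v) = \frac{q}{(2-q)\|v\|_{q,a}^q}\int_{\mathbb{R}^N} b(x)\left(f(v)v - 2F(v)\right)\dx,
\]
which is the claimed formula. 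This step is a routine algebraic manipulation.

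The genuinely new input is the strict positivity, and here I expect to use \ref{f_quatro} as the main ingredient. I would set $\psi(t) = f(t)t - 2F(t)$, note that $\psi(0)=0$, and compute $\psi'(t) = f'(t)t + f(t) - 2f(t) = f'(t)t - f(t)$. By \ref{f_quatro} we have $t\,\psi'(t) = f'(t)t^2 - f(t)t > 0$ for every $t\neq 0$, so $\psi'(t)$ has the same sign as $t$; hence $\psi$ is strictly increasing on $(0,\infty)$ and strictly decreasing on $(-\infty,0)$, and since $\psi(0)=0$ it follows that $\psi(t)>0$ for all $t\neq 0$. Because $b(x)\geq 1$ by \ref{B_weight} and $v=t_e u\neq 0$ precisely on the set $\{u\neq 0\}$, which has positive measure, the integrand $b(x)\psi(v)$ is nonnegative and strictly positive on a set of positive measure, so its integral is strictly positive. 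The denominator $\|v\|_{q,a}^q = t_e^{\,q}\|u\|_{q,a}^q$ is positive (this is exactly the condition making $R_e$ well defined at $v$), and therefore $R_e(v)>0$. The only delicate point is this sign analysis through \ref{f_quatro}; everything preceding it is substitution and homogeneity.
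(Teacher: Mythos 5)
Your proof is correct and follows essentially the same route as the paper, whose entire proof reads ``use \eqref{aie} and the same idea of Corollary \ref{c_identidade}'', i.e.\ multiply the critical-point identity \eqref{aie} by $t_e^2$ to get \eqref{ul} and substitute it into the definition of $R_e$. Your sign analysis of $\psi(t)=f(t)t-2F(t)$ via \ref{f_quatro} supplies exactly the justification for the strict positivity that the paper leaves implicit, so nothing is missing.
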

	\begin{proof}
		The proof follows by the use of \eqref{aie} and the same idea of Corollary \ref{c_identidade}.
	\end{proof}
	
	\begin{remark}\label{r_lambdahomo}
		The functional $\Lambda_e$ is 0--homogeneous, that is, we have $\Lambda_e(\alpha u) = \Lambda_e(u),$ $\alpha > 0,$ $u \in X \setminus \{0\}$. The proof of this fact uses the same arguments contained in Remark \ref{zero}.
	\end{remark}
	At this stage, we consider $\Lambda_e : X \setminus \{0\} \to \mathbb{R}$ given by $\Lambda_e(u) = R_e(t_e(u)u)$. The following extremal value is defined:
	\begin{equation*}
		\lambda_* = \inf_{u \in X \setminus \{0\}} \Lambda_e(u).
	\end{equation*}
	
	\begin{proposition}\label{ed3}
		There is $\eta>0$ such that $\|t_e(u) u\|_V \geq \eta,$ for any $u \in X \setminus \{0\}$.	
	\end{proposition}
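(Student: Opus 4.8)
The plan is to mirror the proof of Lemma \ref{l_difzero}, replacing the Nehari-type identity \eqref{unique_local} by its energy counterpart \eqref{ul} from Corollary \ref{cor1}. Writing $v = t_e(u) u$ (which is nonzero, since $u \neq 0$ and $t_e(u) > 0$), Corollary \ref{cor1} gives
\[
\frac{(2-q)}{2}\|v\|_V^2 = \int_{\mathbb{R}^N} b(x)\left( f(v)v - q F(v)\right)\dx.
\]
It therefore suffices to bound the right-hand side from above by a superquadratic expression in $\|v\|_V$ and then isolate a uniform lower bound, exactly as was done for $t_n(u)$.

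First I would control the integrand through the growth estimate \eqref{condicao_crescimento}. From $|f(t)| \leq \varepsilon |t| + C_\varepsilon |t|^{p-1}$ one obtains, by integrating on $[0,|v|]$, the companion bound $|F(v)| \leq \tfrac{\varepsilon}{2}|v|^2 + \tfrac{C_\varepsilon}{p}|v|^p$. Combining the two estimates yields a pointwise inequality of the form
\[
\left| f(v)v - q F(v)\right| \leq C_1\,\varepsilon\,|v|^2 + C_2\,C_\varepsilon\,|v|^p,
\]
with $C_1, C_2 > 0$ depending only on $q$ and $p$. (By the consequence of \ref{f_tres} recorded earlier, namely $f(t)t \geq q F(t)$, the integrand is in fact nonnegative, so only this upper bound is ever needed.)

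Next I would invoke the continuous embedding $H^s_V(\mathbb{R}^N) \hookrightarrow L^\theta_b(\mathbb{R}^N)$ for $2 \leq \theta < 2^\ast_s$ furnished by Proposition \ref{p_imersaopeso}, which provides constants $\mathcal{C}_2, \mathcal{C}_p > 0$ (the same as in Lemma \ref{l_difzero}) with $\int_{\mathbb{R}^N} b(x)|v|^2 \dx \leq \mathcal{C}_2 \|v\|_V^2$ and $\int_{\mathbb{R}^N} b(x)|v|^p \dx \leq \mathcal{C}_p \|v\|_V^p$. Inserting these into the identity above produces
\[
\frac{(2-q)}{2}\|v\|_V^2 \leq C_1\,\varepsilon\,\mathcal{C}_2\,\|v\|_V^2 + C_2\,C_\varepsilon\,\mathcal{C}_p\,\|v\|_V^p.
\]

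Finally, since $\|v\|_V \neq 0$, I would divide by $\|v\|_V^2$ and then fix $\varepsilon$ small enough that $C_1\,\varepsilon\,\mathcal{C}_2 < (2-q)/2$, which leaves
\[
\|v\|_V^{\,p-2} \geq \frac{(2-q)/2 - C_1\,\varepsilon\,\mathcal{C}_2}{C_2\,C_\varepsilon\,\mathcal{C}_p} =: \eta_0 > 0.
\]
As the right-hand side does not depend on $u$, taking $\eta = \eta_0^{1/(p-2)}$ yields $\|t_e(u) u\|_V \geq \eta$ for every $u \in X \setminus \{0\}$, which is the assertion. The only mildly delicate point is the $F$-estimate, but it is routine once the growth condition on $f$ is integrated; the remaining steps are a verbatim adaptation of Lemma \ref{l_difzero}.
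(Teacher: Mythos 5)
Your proposal is correct and is essentially the paper's own proof: both start from the identity \eqref{ul} of Corollary \ref{cor1}, bound the right-hand side using the growth condition coming from \ref{f_um} together with the embedding constants $\mathcal{C}_2$, $\mathcal{C}_p$ of Proposition \ref{p_imersaopeso}, and conclude by taking $\varepsilon$ small, exactly as in Lemma \ref{l_difzero}. The details you spell out (the integrated bound on $F$ and the explicit extraction of $\eta$ from $\|v\|_V^{p-2}$) are precisely what the paper leaves implicit in its two-line argument.
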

	\begin{proof}
		For each $X \setminus \{0\}$, following \ref{f_um} and \eqref{ul} we have
		\begin{equation*}
			(2-q)\| v \|_V^2 \leq 2 \varepsilon \mathcal{C}_2 \| v \| _V^2 + 2 C_\varepsilon \mathcal{C}_p \| v \|^p_V,
		\end{equation*} 
		where $v = t_e(u)u$. The last inequality together with the fact that $\varepsilon > 0$ is arbitrary imply $\|v\|_V \geq \eta > 0,$ for some $\eta > 0$.
	\end{proof}
	\begin{proposition}\label{ed1}
		Let $(u_k) \subset X \setminus \{0\}$ such that $\Lambda_e(u_k) \to \lambda_*.$ Then, the sequence given by $v_k = t_e(u_k)u_k$ is bounded in $X$.
	\end{proposition}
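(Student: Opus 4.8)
The plan is to reproduce the contradiction scheme of Lemma~\ref{l_boundedbis}, replacing the Rayleigh quotient $R_n$ by $R_e$, the function $H$ by $G$, and Corollary~\ref{c_identidade} by Corollary~\ref{cor1}. Suppose, for contradiction, that $\|v_k\|_V \to \infty$ along a subsequence. Writing $w_k = v_k/\|v_k\|_V$, the sequence $(w_k)$ is bounded in $X$, so up to a subsequence $w_k \rightharpoonup w$ in $X$, $w_k \to w$ a.e., and $w_k \to w$ strongly in $L^\theta_b(\mathbb{R}^N)$ and $L^q_a(\mathbb{R}^N)$ by Proposition~\ref{p_imersaopeso} and Corollary~\ref{c_compactA}. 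The starting point is the identity \eqref{ul} of Corollary~\ref{cor1}, which I rewrite using $f(v_k)v_k - qF(v_k) = G(v_k)v_k^2$; dividing by $\|v_k\|_V^2$ yields
\begin{equation*}
\frac{2-q}{2} = \int_{\mathbb{R}^N} b(x) G(v_k) w_k^2 \dx .
\end{equation*}

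Next I split into two cases according to whether $w$ vanishes. If $w \neq 0$, the set $[w \neq 0]$ has positive measure and $|v_k(x)| \to \infty$ a.e. on it. By Lemma~\ref{crucial} the function $G$ is monotone away from the origin and satisfies $G(t) \to +\infty$ as $|t| \to \infty$; combined with $\lim_{t \to 0^\pm} G(t) = 0$, which follows from \ref{f_dois}, this gives $G \ge 0$ on $\mathbb{R}$ (equivalently $f(t)t \ge qF(t)$). Hence the integrand $b(x)G(v_k)w_k^2$ is nonnegative and Fatou's Lemma applies; since $w_k^2 \to w^2 > 0$ and $G(v_k) \to +\infty$ a.e. on $[w \neq 0]$ while $b \ge 1$, the right-hand side diverges to $+\infty$, contradicting that it equals the finite constant $(2-q)/2$. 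Thus $w \neq 0$ is impossible.

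In the remaining case $w = 0$ I exploit the maximality of $t_e$. By Lemma~\ref{l_uniqueE}, $v_k = t_e(u_k)u_k$ is the maximizer of $t \mapsto R_e(t v_k)$, so that $R_e(t v_k) \le R_e(v_k) = \Lambda_e(u_k) \le \lambda_* + 1/k$ for every $t > 0$. Evaluating at $t = 1/\|v_k\|_V$, so that $t v_k = w_k$ has unit $\|\cdot\|_V$-norm, gives
\begin{equation*}
\frac{q}{2} - q\int_{\mathbb{R}^N} b(x) F(w_k) \dx \le \Big(\lambda_* + \frac{1}{k}\Big)\|w_k\|_{q,a}^q .
\end{equation*}
Since $w_k \rightharpoonup 0$, the compact embeddings give $\|w_k\|_{q,a}^q \to 0$, while the growth bound \eqref{condicao_crescimento} together with $w_k \to 0$ in $L^2_b(\mathbb{R}^N)$ and $L^p_b(\mathbb{R}^N)$ forces $\int_{\mathbb{R}^N} b(x) F(w_k)\dx \to 0$. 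Passing to the limit leaves $q/2 \le 0$, which is absurd because $q > 1$. This contradiction establishes the boundedness of $(v_k)$.

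The main obstacle I anticipate is justifying the Fatou step in the case $w \neq 0$: it rests on the sign information $G \ge 0$ and the blow-up $G(t) \to +\infty$, both of which must be extracted from Lemma~\ref{crucial} and \ref{f_dois} rather than assumed. A secondary technical point is applying the scaling inequality $R_e(t v_k) \le R_e(v_k)$ at exactly $t = 1/\|v_k\|_V$, so that the normalized competitor coincides with $w_k$; this is legitimate precisely because of the uniqueness of the maximizer furnished by Lemma~\ref{l_uniqueE}.
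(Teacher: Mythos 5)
Your proposal is correct and follows essentially the same argument as the paper: the identity $\tfrac{2-q}{2}=\int_{\mathbb{R}^N}b(x)G(v_k)w_k^2\dx$, the dichotomy $w\neq 0$ (Fatou's Lemma plus Lemma \ref{crucial}) versus $w=0$ (rescaling to $t=1/\|v_k\|_V$ and the compact embeddings), with the same contradictions. Your only additions are welcome bookkeeping the paper leaves implicit, namely the nonnegativity $G\geq 0$ that legitimizes Fatou's Lemma (a fact already noted in the paper's remarks on the assumptions) and the observation that maximality of $t_e$ justifies the inequality $R_e(tv_k)\leq R_e(v_k)$.
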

	\begin{proof}
		Clearly, $\Lambda_e(u_k) = R_e(v_k)$ with
		\begin{equation*}
			\lambda_* \leq \Lambda_e(v_k) < \lambda_* + \frac{1}{k}\quad\text{and}\quad \dfrac{\rm d}{{\rm d}t} R_e(tv_k) {\Big|_{t=1}} = 0.	
		\end{equation*}
		As a consequence, we obtain
		\begin{equation*}
			\frac{2-q}{2}\| v_k \|_V^2 =  \int _{\mathbb{R}^N} b(x) \left( f(v_k)v_k - q F(v_k) \right) \dx.
		\end{equation*}
		The proof follows arguing by contradiction. Assume that $\|v_k\|_V \to + \infty,$ as $k \to \infty$. We have,
		\begin{equation*}
			\frac{(2 -q)}{2} = \int _{\mathbb{R}^N}b(x) \left[\frac{f(v_k)}{v_k} -q \frac{F(v_k)}{v_k^2}\right] w_k^2 dx,
		\end{equation*}
		where $w_k = v_k/\|v_k\|_V$. Since $(w_k)$ is bounded in $X,$ there exists $w \in X$ such that $w_k \rightharpoonup w$ in $X,$ up to a subsequence.
		
		We now apply the same argument made in Lemma \ref{l_boundedbis}, splitting the proof into cases. In the first one we assume $w \neq 0$, that is, the set $[w \neq 0] = \{ x \in \mathbb{R}^N : w(x) \neq 0\}$ has positive Lebesgue measure. Therefore, $|v_k(x)| \to \infty$ a. e. in the set $[w \neq 0]$ as $k \to \infty$. Now, by \ref{f_tres} and taking into account Fatou's Lemma, we infer
		\begin{align*}
			\frac{(2 -q)}{2} &= \liminf_{k \to \infty} \int _{\mathbb{R}^N}b(x) \left[\frac{f(v_k)}{v_k} -q \frac{F(v_k)}{v_k^2}\right] w_k^2 dx \nonumber \\
			&\geq\int _{\mathbb{R}^N} b(x) \liminf_{k \to \infty} \left[\frac{f(v_k)}{v_k} -q \frac{F(v_k)}{v_k^2}\right] w_k^2 dx \nonumber \\
			&\geq\int _{[w \neq 0]} b(x) \liminf_{k \to \infty}  \left[\frac{f(v_k)}{v_k} -q \frac{F(v_k)}{v_k^2}\right] w_k^2 dx = + \infty .
		\end{align*}
		This contradiction proves that $w \neq 0$ is impossible. This finishes the proof for the first case.
		In the second case we shall assume that $w = 0$, that is, $w_k \rightharpoonup 0$ in $X$. One have
		\begin{equation*}
			R_e(tv_k) \leq R_e(v_k) = \Lambda_e(v_k) \leq \lambda_* + 1/k, \quad t > 0.
		\end{equation*}
		Using the last assertion with $t = 1/\|v_k\|$ we obtain
		\begin{equation}\label{ale13}
			\frac{1}{2} \leq  \int _{\mathbb{R}^N}b(x) F(w_k) dx + \left(\lambda_* + 1/k\right) \frac{\|w_k\|_{q,a}^q}{q}.
		\end{equation}
		On the other hand, \ref{f_um} and \ref{f_dois} together with the compact embedding of $X$ into $L^p_b(\mathbb{R}^N)$ and $L^q_a(\mathbb{R}^N),$ imply
		\begin{equation}\label{ale23}
			\|w_k\|_{q,a}^q \to 0\quad\text{and}\quad\int _{\mathbb{R}^N}b(x) F(w_k)dx \to 0, \quad \text{as} \ k \to \infty. 
		\end{equation}
		Under these conditions, \eqref{ale13} and \eqref{ale23} leads to
		\begin{equation*}
			\frac{1}{2} \leq  \int _{\mathbb{R}^N}b(x) F(w_k)dx + \left(\lambda_* + 1/k\right) \frac{\|w_k\|_{q,a}^q}{q} \to 0.
		\end{equation*} 
		This contradiction proves that $(v_k)$ is bounded in $X$. This ends the proof. 
	\end{proof}

	\begin{proposition}\label{ed2}
		$\lambda_*$ is attained, that is, there exists $v \in X \setminus \{0\}$ such that $\lambda_* = \Lambda_e(v)$. In particular, $0 < \lambda_* < \infty$.
	\end{proposition}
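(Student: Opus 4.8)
The plan is to repeat, with $R_e$ in place of $R_n$, the strategy already used for $\lambda^\ast$ in Proposition \ref{as}, relying now on the fibering analysis of $q_e$ (Lemma \ref{l_uniqueE}, Corollary \ref{cor1}) and on Propositions \ref{ed3} and \ref{ed1}. I would fix a minimizing sequence $(u_k) \subset X \setminus \{0\}$ with $\Lambda_e(u_k) \to \lambda_*$ and set $v_k = t_e(u_k) u_k$. By the $0$--homogeneity of $\Lambda_e$ (Remark \ref{r_lambdahomo}) one has $\Lambda_e(v_k) = \Lambda_e(u_k) \to \lambda_*$, and since $v_k$ maximizes $t \mapsto R_e(t u_k)$ it follows that $R_e(v_k) = \Lambda_e(v_k)$, i.e. $t_e(v_k) = 1$. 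Proposition \ref{ed1} ensures that $(v_k)$ is bounded in $X$, so after passing to a subsequence $v_k \rightharpoonup v_0$ for some $v_0 \in X$.

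The first genuine point is to rule out $v_0 = 0$. I would argue by contradiction: if $v_0 = 0$, the compact embeddings of $X$ into $L^2_b(\mathbb{R}^N)$ and $L^p_b(\mathbb{R}^N)$ from Proposition \ref{p_imersaopeso} force $\|v_k\|_{2,b} \to 0$ and $\|v_k\|_{p,b} \to 0$. Applying the growth estimate \eqref{condicao_crescimento} to the identity \eqref{ul},
\[
\frac{2-q}{2}\|v_k\|_V^2 = \int_{\mathbb{R}^N} b(x)\big(f(v_k)v_k - q F(v_k)\big)\dx,
\]
the right-hand side is controlled by a multiple of $\varepsilon\|v_k\|_{2,b}^2 + C_\varepsilon \|v_k\|_{p,b}^p$ and hence tends to $0$. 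This yields $\|v_k\|_V \to 0$, contradicting the uniform lower bound $\|v_k\|_V \geq \eta > 0$ of Proposition \ref{ed3}. Thus $v_0 \in X \setminus \{0\}$.

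Next I would establish the weak lower semicontinuity $\Lambda_e(v_0) \leq \liminf_k \Lambda_e(v_k)$, exactly as in Lemma \ref{l_liminf}. Writing $\bar t = t_e(v_0)$ and using $\Lambda_e(v_k) \geq R_e(\bar t v_k)$, it suffices to check $\liminf_k R_e(\bar t v_k) \geq R_e(\bar t v_0) = \Lambda_e(v_0)$. This holds because $\|v_k\|_{q,a}^q \to \|v_0\|_{q,a}^q$ (Corollary \ref{c_compactA}) and $\int_{\mathbb{R}^N} b(x)F(\bar t v_k)\dx \to \int_{\mathbb{R}^N} b(x)F(\bar t v_0)\dx$ (compactness in $L^p_b$ together with \eqref{condicao_crescimento}), while $\liminf_k \|v_k\|_V^2 \geq \|v_0\|_V^2$ by weak lower semicontinuity of $\|\cdot\|_V$. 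Combining this with $\lambda_* \leq \Lambda_e(v_0)$ gives
\[
\lambda_* \leq \Lambda_e(v_0) \leq \liminf_k \Lambda_e(v_k) = \lambda_*,
\]
so $\lambda_* = \Lambda_e(v_0)$ is attained and, in particular, $\lambda_* < \infty$.

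Finally, to get $\lambda_* > 0$ I would invoke Corollary \ref{cor1} with $w_0 = t_e(v_0) v_0 \neq 0$, which gives
\[
\lambda_* = R_e(w_0) = \frac{q}{(2-q)\|w_0\|_{q,a}^q}\int_{\mathbb{R}^N} b(x)\big(f(w_0)w_0 - 2F(w_0)\big)\dx.
\]
The sign of the integrand is settled by \ref{f_quatro}: setting $g(t) = f(t)t - 2F(t)$, one has $g(0)=0$ and $t g'(t) = f'(t)t^2 - f(t)t > 0$, so $g$ is increasing on $(0,\infty)$ and decreasing on $(-\infty,0)$, whence $g(t) > 0$ for $t \neq 0$. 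Since $b \geq 1$ and $w_0 \neq 0$ on a set of positive measure, the integral is strictly positive and $\lambda_* > 0$. I expect the delicate step to be ruling out $v_0 = 0$, since it is there that the boundedness (Proposition \ref{ed1}), the uniform lower bound (Proposition \ref{ed3}) and the compactness of the embeddings must be combined; the remaining steps are routine adaptations of the arguments already carried out for $\lambda^\ast$.
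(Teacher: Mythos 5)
Your proposal is correct and follows essentially the same route as the paper's own proof: a minimizing sequence $v_k = t_e(u_k)u_k$ bounded via Proposition \ref{ed1}, exclusion of the zero weak limit by combining the identity \eqref{ul} and the compact embeddings of Proposition \ref{p_imersaopeso} against the lower bound of Proposition \ref{ed3}, weak lower semicontinuity of $\Lambda_e$ to conclude attainment, and positivity from Corollary \ref{cor1}. The additional details you supply (the explicit lower-semicontinuity argument and the monotonicity of $g(t)=f(t)t-2F(t)$ under \ref{f_quatro}) merely make explicit steps that the paper leaves implicit.
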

	\begin{proof}
		Let $(v_k) \in X$ be the minimizing sequence for $\lambda_*$ given in Proposition \ref{ed1}. Since $(v_k)$ is bounded, there exists $v \in X$ such that $v_k \rightharpoonup v$ in $X,$ up to a subsequence. Now we claim that $v \neq 0$. Assuming the claim and using the fact that the norm $\| \cdot\|_V$ is weakly lower semicontinuous, one have
		\begin{equation*}
			\lambda_* \leq \Lambda_e(v) \leq \liminf_{k \rightarrow \infty} \Lambda_e(v_k) = \lambda_*.
		\end{equation*}
		As a consequence, $\lambda_* = \Lambda_e(v) > 0$ with $\lambda_* \in (0, \infty)$ and $\lambda_*$ is attained. The proof of the claim follows arguing by contradiction, that is, $v_k \rightharpoonup 0$ in $X$. By Proposition \ref{p_imersaopeso} together with hypotheses \ref{f_um} and \ref{f_dois}, we have
		\begin{equation*}
			\int_{\mathbb{R}^N} b(x) F(v_k) dx \rightarrow 0\quad  \text{and} \quad \int_{\mathbb{R}^N} b(x) f(v_k) v_k dx \rightarrow 0, \quad \text{ as }k \rightarrow \infty.
		\end{equation*}
		On the other hand, using Proposition \ref{ed3} and \eqref{ul}, we obtain
		\begin{equation*}
			0 < \dfrac{(2 -q) \eta^2}{2} \leq \dfrac{(2-q)}{2}\| v_k\|_V^2 = \int _{\mathbb{R}^N } b(x) \left(f(v_k)v_k - q F(v_k) \right) \dx = o_k(1),
		\end{equation*}
		a contradiction, proving that $v \neq 0$. This finishes the proof.
	\end{proof}
	
	\begin{proposition}\label{ed4}
		It holds,
		\begin{equation}\label{aii}
			q_n(t) - q_e(t) = \dfrac{t}{q} q'_e(t), \quad t > 0.
		\end{equation}
		Furthermore,
		\begin{enumerate}[label=\roman*)]
			\item $q_n(t) = q_e(t)$ if, and only if $t= t_e$;
			\item $q_n(t) > q_e(t)$ if, and only if $t \in (0, t_e)$;
			\item $q_n(t) < q_e(t)$ if, and only if $t \in (t_e, \infty)$.
		\end{enumerate}
	\end{proposition}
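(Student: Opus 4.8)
The plan is to establish the identity \eqref{aii} by a direct differentiation-and-comparison, and then to read off i)--iii) as immediate sign consequences using the shape of $q_e$ already pinned down in Lemma \ref{l_uniqueE}. Writing $A := \|u\|_V^2$ and $C := \|u\|_{q,a}^q$ for brevity, the formula for $R_n(tu)$ recorded in the proof of Lemma \ref{l_unique} reads
\begin{equation*}
	q_n(t) = \frac{1}{C}\left[A\,t^{2-q} - t^{1-q}\int_{\mathbb{R}^N} b(x)f(tu)u\dx\right],
\end{equation*}
while the displayed expression for $R_e(tu)$ preceding \eqref{aie} reads
\begin{equation*}
	q_e(t) = \frac{q}{C}\left[\frac{A}{2}\,t^{2-q} - t^{-q}\int_{\mathbb{R}^N} b(x)F(tu)\dx\right].
\end{equation*}

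Next I would differentiate $q_e$. Since $\tfrac{\rm d}{\dt}F(tu)=f(tu)u$, the growth bounds \eqref{condicao_crescimento} (that is, \ref{f_um}--\ref{f_dois}) legitimize differentiation under the integral sign, which is exactly what underlies $R_e \in C^1$, and one finds
\begin{equation*}
	q_e'(t) = \frac{q}{C}\left[\frac{2-q}{2}A\,t^{1-q} + q\,t^{-q-1}\int_{\mathbb{R}^N} b(x)F(tu)\dx - t^{-q}\int_{\mathbb{R}^N} b(x)f(tu)u\dx\right].
\end{equation*}
Multiplying by $t/q$ and, independently, computing $q_n(t)-q_e(t)$ directly from the two displays above, both quantities collapse to the single expression
\begin{equation*}
	\frac{1}{C}\left[\frac{2-q}{2}A\,t^{2-q} - t^{1-q}\int_{\mathbb{R}^N} b(x)f(tu)u\dx + q\,t^{-q}\int_{\mathbb{R}^N} b(x)F(tu)\dx\right],
\end{equation*}
which proves \eqref{aii}. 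This is the whole content of the identity; it is purely algebraic and I foresee no real obstacle, the only delicate point being the differentiation under the integral, which \eqref{condicao_crescimento} settles.

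For the equivalences, I would note that $t/q>0$ for every $t>0$, so \eqref{aii} forces $q_n(t)-q_e(t)$ to carry the same sign as $q_e'(t)$. By Lemma \ref{l_uniqueE}, $q_e$ possesses a unique critical point $t_e>0$, which is its global maximum on $(0,\infty)$; since $q_e'$ is continuous and vanishes only at $t_e$, it keeps a constant sign on each of $(0,t_e)$ and $(t_e,\infty)$, and the maximum property forces that sign to be positive on the first interval and negative on the second. Transporting this through \eqref{aii} yields $q_n(t)-q_e(t)>0$ precisely on $(0,t_e)$, $q_n(t_e)-q_e(t_e)=0$, and $q_n(t)-q_e(t)<0$ precisely on $(t_e,\infty)$, which are exactly assertions i), ii) and iii).
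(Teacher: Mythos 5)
Your proposal is correct and follows essentially the same route as the paper, which dismisses \eqref{aii} as a ``straightforward calculation'' and reads i)--iii) directly off the identity; your computation of $q_e'$ and the sign analysis of $q_e'$ via Lemma \ref{l_uniqueE} (unique critical point, global maximum, $q_e(t)\to-\infty$) simply fills in the details the paper omits. The verification checks out: both $q_n(t)-q_e(t)$ and $\tfrac{t}{q}q_e'(t)$ reduce to the same expression, and the sign transfer through the factor $t/q>0$ is exactly what the equivalences require.
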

	\begin{proof}
		The proof of \eqref{aii} follows from a straightforward calculation. The items $i)$--$iii)$ are proved directly by \eqref{aii}. 
	\end{proof}
	\begin{proposition}\label{p_desiboa}
		$t_n(u) < t_e(u)$ and $\Lambda_e(u) < \Lambda_n(u),$ $u \in X \setminus \{0\}$. In particular, $ 0 < \lambda_* < \lambda^* < \infty$.
	\end{proposition}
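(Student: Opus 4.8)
The plan is to reduce the entire statement to the single pointwise inequality $t_n(u) < t_e(u)$, and then harvest everything else from the identity \eqref{aii} together with the uniqueness of the maximizers established in Lemmas \ref{l_unique} and \ref{l_uniqueE}. Indeed, once $t_n(u)<t_e(u)$ is known, evaluating \eqref{aii} at $t=t_e(u)$ (item $i)$ of Proposition \ref{ed4}) gives $q_n(t_e(u)) = q_e(t_e(u)) = \Lambda_e(u)$, while the fact that $t_n(u)$ is the \emph{unique} maximizer of $q_n$ forces the strict inequality $\Lambda_n(u) = q_n(t_n(u)) > q_n(t_e(u)) = \Lambda_e(u)$. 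Thus the whole proof pivots on locating $t_n(u)$ strictly to the left of $t_e(u)$.

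For that key step I would work with the auxiliary function $\Psi(t) := \int_{\mathbb{R}^N} b(x)\, G(tu)\,u^2 \dx$, where $G$ is the function from Lemma \ref{crucial}. Factoring $t^{1-q}$ out of the derivative computed just before \eqref{aie} yields
\begin{equation*}
q_e'(t) = \frac{q}{\|u\|_{q,a}^{q}}\, t^{1-q}\left[\frac{2-q}{2}\|u\|_V^2 - \Psi(t)\right], \qquad t>0,
\end{equation*}
so that $t_e(u)$ is precisely the unique zero of the bracketed term. Since $G$ is strictly increasing on $(0,\infty)$ and strictly decreasing on $(-\infty,0)$ by Lemma \ref{crucial}, the map $t\mapsto G(tu(x))$ is strictly increasing for every $x$ with $u(x)\neq 0$; as $u\not\equiv 0$ and $b\geq 1$, this makes $\Psi$ strictly increasing, whence $\Psi'(t_e(u))>0$. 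Differentiating the displayed formula and using that the bracket vanishes at $t_e(u)$ gives
\begin{equation*}
q_e''(t_e(u)) = -\frac{q}{\|u\|_{q,a}^{q}}\, t_e(u)^{1-q}\,\Psi'(t_e(u)) < 0.
\end{equation*}
Differentiating the identity \eqref{aii} once and evaluating at $t_e(u)$, where $q_e'(t_e(u))=0$, produces $q_n'(t_e(u)) = \tfrac{t_e(u)}{q}\,q_e''(t_e(u)) < 0$. By the sign pattern of $q_n'$ recorded in Lemma \ref{l_unique} (positive on $(0,t_n(u))$, negative on $(t_n(u),\infty)$), this strict negativity at $t_e(u)$ forces $t_n(u) < t_e(u)$.

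Having $\Lambda_e(u) < \Lambda_n(u)$ for every $u\in X\setminus\{0\}$, passing to the infimum gives $\lambda_* \leq \lambda^*$; to upgrade this to a strict inequality I would invoke attainment from Proposition \ref{as}, writing $\lambda^* = \Lambda_n(v_0)$ for some $v_0\in X\setminus\{0\}$, so that $\lambda_* \leq \Lambda_e(v_0) < \Lambda_n(v_0) = \lambda^*$. Combining this with the positivity of $\lambda_*$ (Proposition \ref{ed2}) and the finiteness of $\lambda^*$ (Proposition \ref{as}) yields $0<\lambda_*<\lambda^*<\infty$. The main obstacle is precisely obtaining the \emph{strict} second-order condition $q_e''(t_e(u))<0$: the softer arguments from \eqref{aii} only deliver $t_n(u)\leq t_e(u)$ and leave open the degenerate coincidence $t_n(u)=t_e(u)$, which must be excluded, and it is the strict monotonicity of $G$ (hence of $\Psi$) guaranteed by \ref{f_tres} that rules it out.
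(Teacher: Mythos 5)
Your overall architecture is sound and is essentially the mirror image of the paper's: the paper first asserts $\Lambda_n(u)>\Lambda_e(u)$ directly from Proposition \ref{ed4} (its inequality \eqref{desig_nec}), then deduces $t_n(u)<t_e(u)$ by contradiction, and finishes with exactly your attainment argument via Proposition \ref{as}; you instead try to prove $t_n(u)<t_e(u)$ first and then harvest $\Lambda_n(u)>\Lambda_e(u)$ from uniqueness of the maximizer of $q_n$. Your individual reductions are all correct: $q_n(t_e(u))=q_e(t_e(u))=\Lambda_e(u)$, the factorization $q_e'(t)=\frac{q}{\|u\|_{q,a}^q}t^{1-q}\bigl[\frac{2-q}{2}\|u\|_V^2-\Psi(t)\bigr]$, the identities $q_e''(t_e(u))=-\frac{q}{\|u\|_{q,a}^q}t_e(u)^{1-q}\Psi'(t_e(u))$ and $q_n'(t_e(u))=\frac{t_e(u)}{q}q_e''(t_e(u))$, and the closing chain $\lambda_*\leq\Lambda_e(v_0)<\Lambda_n(v_0)=\lambda^*$. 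You also correctly identify that the whole statement hinges on excluding the degenerate coincidence $t_n(u)=t_e(u)$ --- a point which, it should be said, the paper itself passes over rather quickly when asserting the strict inequality in \eqref{desig_nec}.

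However, your exclusion of that degenerate case contains a genuine gap: from ``$\Psi$ is strictly increasing'' you conclude ``$\Psi'(t_e(u))>0$.'' That inference is false; a strictly increasing $C^1$ function can have vanishing derivative at a point (consider $t\mapsto(t-t_e)^3$). Strict monotonicity of $G$ (Lemma \ref{crucial}) only yields $\Psi'\geq0$, hence $q_e''(t_e(u))\leq0$ and $q_n'(t_e(u))\leq0$; and if $\Psi'(t_e(u))=0$ then $q_n'(t_e(u))=0$, so uniqueness of the critical point of $q_n$ (Lemma \ref{l_unique}) forces precisely $t_n(u)=t_e(u)$ and $\Lambda_n(u)=\Lambda_e(u)$, the case you must rule out. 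Note that the paper never claims the pointwise signs $G'>0$ on $(0,\infty)$, $G'<0$ on $(-\infty,0)$; Remark \ref{r_chata} explicitly treats them as an \emph{additional} hypothesis. The gap is repairable from \ref{f_tres}: writing $H(\sigma)=f'(\sigma)+(1-q)f(\sigma)/\sigma$, one has $\frac{d}{\dsigma}\bigl[\sigma^2G(\sigma)\bigr]=\sigma H(\sigma)$, hence $t^2G(t)=\int_0^t\sigma H(\sigma)\dsigma<\frac{t^2}{2}H(t)$ for $t>0$ by strict monotonicity of $H$ (and likewise for $t<0$), i.e. $H(t)-2G(t)>0$ for $t\neq0$; then the identity $tG'(t)=H(t)-2G(t)$ gives exactly the pointwise signs of $G'$ you need, so that $\Psi'(t_e(u))=\int_{\mathbb{R}^N}b(x)G'(t_e(u)u)u^3\dx>0$. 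Alternatively one can avoid derivatives of $G$ altogether: if $t_n(u)=t_e(u)=\tau$, then subtracting twice \eqref{aie} from \eqref{unique_local} yields $\int_{\mathbb{R}^N}b(x)\bigl[H(\tau u)-2G(\tau u)\bigr]u^2\dx=0$, contradicting $H>2G$ on the set $[u\neq0]$ since $b\geq1$. With either patch inserted, your proof is complete and in fact fills in the step the paper leaves implicit.
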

	\begin{proof}
		Proposition \ref{ed4} implies,
		\begin{equation}\label{desig_nec}
			\Lambda_n(u) = \max_{t > 0} q_n(t) > \max_{t \in (0, t_e(u))} q_e(t) = \sup_{t > 0} q_e(t) = \Lambda_e(u). 
		\end{equation}
		Moreover, if $t_n(u) \geq t_e(u),$ then $\Lambda_n(u) = q_n (t_n(u))\leq q_e (t_n(u)) \leq \Lambda_e(u),$ a contradiction with \eqref{desig_nec}. Consequently, 
		Proposition \ref{as}, guarantees the existence of $v \in X \setminus \{0\}$ with $\lambda^* = \Lambda_n(v).$ Then, 
		\begin{equation*}
			\lambda_* \leq \Lambda_e(v) = q_e(t_e(v)) = q_n(t_e(v)) < q_n(t_n(v)) = \max_{t > 0} q_n(t) = \Lambda_n(v) = \lambda^*.\qedhere
		\end{equation*}
	\end{proof}
	It is worthwhile to mention that $\Lambda_e$ and $\Lambda_n$ are related with the energy functional $J_\lambda $ and its derivatives.
	\begin{remark}\label{rmk1}
		Let $t > 0$ and $u \in X \setminus \{0\}.$ Then,
		\begin{enumerate}[label=\roman*)]
			\item $t u \in \mathcal{N}_\lambda $ if, and only if $R_n(tu) = \lambda;$
			\item $R_n(t u)=\lambda$ if, and only if $J'_\lambda(t u) t u = 0;$
			\item $R_n(tu) > \lambda$ if, and only if $J'_\lambda(tu) tu > 0;$
			\item $R_n(tu) < \lambda$ if, and only if $J'_\lambda(tu) tu < 0.$
		\end{enumerate}
	\end{remark}
	Additionally, from the definition of $R_n$, we have that
	\begin{equation}\label{save}
		R_n'(t u)u = q'_n(t) = \frac{1}{t} \dfrac{J''_\lambda(tu) (tu, tu)}{\|tu\|_{q,a}^q}, \quad t > 0, 
	\end{equation}
	for any $u \in X \setminus \{0\}$ with $R_n(tu) = \lambda$. Eq. \eqref{save} provides some useful properties of $q'_n(t).$
	\begin{proposition}\label{der-Rn}
		Assume that $u \in X \setminus \{0\}$ satisfies $R_n(tu) = \lambda,$ for some $t > 0$. Then,
		\begin{enumerate}[label=\roman*)]
			\item $R_n'(t u)u > 0$ if, and only if $J_\lambda ''(tu) (tu, tu) > 0;$
			\item $	R_n'(t u)u   < 0$ if, and only if $J_\lambda''(tu) (tu, tu) < 0;$
			\item $	R_n'(t u)u  = 0$ if, and only if $J_\lambda ''(tu) (tu, tu) = 0.$
		\end{enumerate}
	\end{proposition}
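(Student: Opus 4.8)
The plan is to deduce all three equivalences directly from identity \eqref{save}, which holds precisely under the standing hypothesis $R_n(tu)=\lambda$ assumed in the statement. First I would recall that
\[
R_n'(tu)u \;=\; \frac{1}{t}\,\frac{J''_\lambda(tu)(tu,tu)}{\|tu\|_{q,a}^q},
\]
so that $R_n'(tu)u$ and $J''_\lambda(tu)(tu,tu)$ differ only by the scalar factor $\bigl(t\,\|tu\|_{q,a}^q\bigr)^{-1}$. I would then observe that this factor is strictly positive: indeed $t>0$ by assumption, and $\|tu\|_{q,a}^q=t^q\|u\|_{q,a}^q>0$, since the very definition of $R_n(tu)$—and hence the hypothesis $R_n(tu)=\lambda$ being meaningful—forces the denominator $\|tu\|_{q,a}^q$ appearing in $R_n$ to be nonzero.

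Once the prefactor is known to be strictly positive, multiplying by it preserves the direction of every strict inequality and preserves equalities, so the sign of $R_n'(tu)u$ coincides with the sign of $J''_\lambda(tu)(tu,tu)$; this yields items i), ii) and iii) simultaneously. There is essentially no genuine obstacle in this argument, as it is a purely formal consequence of \eqref{save}; the only point deserving a word of care is the strict positivity of $\|tu\|_{q,a}^q$, which, as noted above, is guaranteed by the assumption that $R_n(tu)$ is defined and equals $\lambda$. Hence the proposition follows at once, and I would keep the write-up to a single short paragraph invoking \eqref{save}.
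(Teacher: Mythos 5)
Your proposal is correct and is essentially the paper's own argument: the paper states Proposition \ref{der-Rn} as an immediate consequence of identity \eqref{save}, which holds exactly under the hypothesis $R_n(tu)=\lambda$, and the three equivalences follow because the prefactor $\bigl(t\,\|tu\|_{q,a}^q\bigr)^{-1}$ is strictly positive. Your remark that $\|tu\|_{q,a}^q>0$ is forced by $R_n(tu)$ being defined is a sensible (and correct) way to justify the one point the paper leaves implicit.
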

	Similar results hold for $q'_e(t).$ Precisely, for each $u \in X \setminus \{0\}$ such that $R_e(tu) = \lambda,$ one can prove that
	\begin{equation*}
		R_e'(tu)u = q'_e(t) =\dfrac{q}{t} \dfrac{J_\lambda '(tu) tu}{\|tu\|_{q,a}^q}, \quad t > 0. 
	\end{equation*}
	\begin{proposition} Suppose that $u \in X \setminus \{0\}$ satisfies $R_e(tu) = \lambda,$ for some $t > 0$. It holds:
		\begin{enumerate}[label=\roman*)]
			\item $R_e'(tu)u > 0$ if, and only if $J_\lambda'(tu) tu > 0;$
			\item $R_e'(tu)u  < 0$ if, and only if $J_\lambda'(tu) tu < 0;$
			\item $R_e'(tu)u  = 0$ if, and only if $J_\lambda'(tu) tu = 0$.
		\end{enumerate}
	\end{proposition}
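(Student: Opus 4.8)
The plan is to deduce all three equivalences directly from the identity displayed immediately above the statement, namely that, whenever $R_e(tu)=\lambda$,
\[
R_e'(tu)u = q_e'(t) = \frac{q}{t}\,\frac{J_\lambda'(tu)\,tu}{\|tu\|_{q,a}^q},\qquad t>0.
\]
Granting this identity, the proposition becomes a pure sign comparison: the scalar $q/(t\,\|tu\|_{q,a}^q)$ multiplying $J_\lambda'(tu)tu$ is strictly positive, hence $R_e'(tu)u$ and $J_\lambda'(tu)tu$ share the same sign, and items i)--iii) follow simultaneously. So the first thing I would pin down is the positivity of this prefactor. Since $q\in(1,2)$ and $t>0$, it suffices to check $\|tu\|_{q,a}^q>0$; this is forced by the hypothesis, because $R_e(tu)$ is defined only where its denominator $\|\cdot\|_{q,a}^q$ is nonzero, while by \ref{A1_weight} one has $\|tu\|_{q,a}^q = t^q\int_{\mathbb{R}^N} a(x)|u|^q\dx \geq 0$.

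Next I would justify the identity itself, which is the only computational ingredient. Writing $q_e(t)=R_e(tu)=\frac{q}{\|u\|_{q,a}^q}\big[\tfrac12 t^{2-q}\|u\|_V^2 - t^{-q}\int_{\mathbb{R}^N} b(x)F(tu)\dx\big]$ and differentiating in $t$, I would collect the result as
\[
q_e'(t)=\frac{q\,t^{-q-1}}{\|u\|_{q,a}^q}\Big[\tfrac{2-q}{2}\,t^2\|u\|_V^2 + q\!\int_{\mathbb{R}^N}\! b(x)F(tu)\dx - t\!\int_{\mathbb{R}^N}\! b(x)f(tu)u\dx\Big].
\]
On the other hand, $J_\lambda'(tu)tu = t^2\|u\|_V^2 - \lambda t^q\|u\|_{q,a}^q - t\int_{\mathbb{R}^N} b(x)f(tu)u\dx$. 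The key step is to invoke the constraint $R_e(tu)=\lambda$, equivalently $\lambda t^q\|u\|_{q,a}^q = q\big[\tfrac12 t^2\|u\|_V^2 - \int_{\mathbb{R}^N} b(x)F(tu)\dx\big]$, to eliminate the explicit $\lambda$-term; after this substitution the bracket appearing in $q_e'(t)$ is exactly $J_\lambda'(tu)tu$, and recalling $\|tu\|_{q,a}^q=t^q\|u\|_{q,a}^q$ yields the displayed identity.

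With both pieces in place the conclusion is immediate: multiplication by the strictly positive factor $q/(t\,\|tu\|_{q,a}^q)$ preserves sign, so $R_e'(tu)u>0$ (resp. $<0$, $=0$) if and only if $J_\lambda'(tu)tu>0$ (resp. $<0$, $=0$). The main, and indeed the only, obstacle is the bookkeeping in the derivation of the identity, precisely the use of the constraint $R_e(tu)=\lambda$ to remove the $\lambda$-dependence from $J_\lambda'(tu)tu$; once that substitution is made, everything reduces to the positivity observation. This argument parallels the proof of Proposition \ref{der-Rn} for $R_n$, with $J_\lambda'$ and $q_e$ playing the roles of $J_\lambda''$ and $q_n$.
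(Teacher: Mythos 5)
Your proposal is correct and follows exactly the paper's (implicit) argument: the paper states the identity $R_e'(tu)u = q_e'(t) = \frac{q}{t}\,\frac{J_\lambda'(tu)\,tu}{\|tu\|_{q,a}^q}$ for $u$ with $R_e(tu)=\lambda$ immediately before the proposition and reads off the three equivalences from the strict positivity of the prefactor, which is precisely your sign-comparison step. Your verification of the identity itself (differentiating $q_e$ and using the constraint $R_e(tu)=\lambda$ to eliminate the $\lambda$-term) is a correct filling-in of the computation the paper leaves to the reader, mirroring how \eqref{save} is obtained for $R_n$.
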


	\section{Minimization problems over the Nehari manifold} In this section we apply the obtained results about $q_e$ to classify the sign of $J_\lambda(u),$ $u\in \mathcal{N}_\lambda.$ In order to do this, let us consider the sets
	\begin{align*}
		\mathcal{N}_\lambda^- &= \left\lbrace  u \in \mathcal{N}_\lambda : J''_\lambda(u)(u,u) < 0 \right\rbrace  ,\\
		\mathcal{N}_\lambda^0 &= \left\lbrace u \in \mathcal{N}_\lambda : J''_\lambda(u)(u,u) = 0\right\rbrace,\\
		\mathcal{N}_\lambda^+ &= \left\lbrace u \in \mathcal{N}_\lambda : J''_\lambda(u)(u,u) > 0\right\rbrace. 
	\end{align*}
	In what follows we shall study the minimization problems:
	\begin{equation}\label{minimization}
		c_{\mathcal{N}_\lambda^-} = \inf_{w \in \mathcal{N}_\lambda^-} J_{\lambda}(w)\quad \text{and}\quad c_{\mathcal{N}_\lambda^+} = \inf_{w \in \mathcal{N}_\lambda ^+} J_{\lambda}(w).
	\end{equation}
	It is important to emphasize that the functional $J_\lambda$ is only in $C^1$ class. However, the second derivative of $J_\lambda$ makes sense only for some directions $u \in X$. Namely, the function 
	\begin{equation*}
		u\mapsto J''_\lambda(u)(u,u) :=  \|u\|^2_V - \lambda (q -1) \|u\|_{q,a}^q - \int_{\mathbb{R}^N} b(x) f'(u) u^2 \dx,	
	\end{equation*}
	is well defined for each $u \in X$. Furthermore, for each $u \in \mathcal{N}_\lambda$, the last identity can be written in the following form: 
	\begin{equation}\label{nehari_neg}
		J''_\lambda(u)(u,u) =  2 \|u\|^2_V - \lambda q \|u\|_{q,a}^q - \int_{\mathbb{R}^N} b(x) \left(f'(u) u^2 + f(u)u \right)\dx.
	\end{equation}
	Nevertheless, by using the implicit function theorem, one have that $\mathcal{N}_\lambda^-$ and $\mathcal{N}_\lambda^+$ are $C^1$ manifolds in $X$. 
	
	On the other hand, it is worthwhile to recall how the fibering map given by $\gamma(t) = J_\lambda(tu),$ $t\geq 0,$ $u \in X \setminus \{0\}$ is related with the Nehari method: $tu$ belongs to the Nehari set $\mathcal{N}_\lambda$ if, and only if $\gamma'(t) = 0$. As mentioned before, it is possible to use Lagrange multiplier theorem to prove that any minimizer $u \in \mathcal{N}_\lambda^-$ or $u \in \mathcal{N}_\lambda^+$ of \eqref{minimization}, respectively, is a critical point for the functional $J_\lambda$. In order to do that and study the minimization problems \eqref{minimization}, it is crucial to analyze the second derivative of $\gamma ,$ which is given by $\gamma''(t) = J''_\lambda(tu)(tu,tu) \neq 0,$ for $u \in X \setminus \{0\}$ such way that $tu \in \mathcal{N}_\lambda^+ \cup \mathcal{N}_\lambda^-$. In other words, the sets $\mathcal{N}_\lambda^-$ and $\mathcal{N}_\lambda^+$ appear to be natural constraints in order to ensure existence of critical points $u$ for $J_\lambda$ with the properties that $J''_\lambda(u)(u,u) < 0$ or $J''_\lambda(u)(u,u) > 0,$ respectively. Next we shall consider some extra properties for the Nehari subsets $\mathcal{N}_\lambda^-$ and $\mathcal{N}_\lambda^+.$
	\begin{proposition}\label{p_evazio}
		If $\lambda \in (0, \lambda^*),$ then $\mathcal{N}_\lambda ^0 = \emptyset$.
	\end{proposition}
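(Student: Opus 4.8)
The plan is to argue by contradiction, converting membership in $\mathcal{N}_\lambda^0$ into a statement about the fibering map $q_n(t) = R_n(tu)$ and then invoking the uniqueness of its critical point. Suppose, for the sake of contradiction, that there exists $u \in \mathcal{N}_\lambda^0$. By definition this means $u \in \mathcal{N}_\lambda$ together with $J''_\lambda(u)(u,u) = 0$. The first condition, via Remark \ref{rmk1}~i) applied with $t = 1$, is equivalent to $R_n(u) = \lambda$. Hence $u$ is a point (at parameter $t=1$) where the Rayleigh quotient equals $\lambda$, and I may legitimately apply Proposition \ref{der-Rn}.

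Next I would translate the condition $J''_\lambda(u)(u,u) = 0$ into a statement on $R_n'$. Since $R_n(u) = \lambda$, Proposition \ref{der-Rn}~iii) (with $t = 1$) gives the equivalence $J''_\lambda(u)(u,u) = 0 \iff R_n'(u)u = 0$. Recalling the identity $R_n'(tu)u = q_n'(t)$ from \eqref{save}, this reads $q_n'(1) = 0$; that is, $t = 1$ is a critical point of $q_n$. Here is where the whole argument turns: by Lemma \ref{l_unique} the fibering map $q_n$ possesses a \emph{unique} critical point, namely its maximizer $t_n(u) > 0$. Consequently $q_n'(1) = 0$ forces $t_n(u) = 1$, and therefore
\begin{equation*}
	\Lambda_n(u) = R_n\big(t_n(u)\,u\big) = R_n(u) = \lambda.
\end{equation*}

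Finally I would close the contradiction using the definition of the extremal value $\lambda^*$. By Proposition \ref{as} we have $\lambda^* = \inf_{w \in X\setminus\{0\}} \Lambda_n(w)$, so in particular $\Lambda_n(u) \geq \lambda^*$. Combined with the displayed equality this yields $\lambda = \Lambda_n(u) \geq \lambda^*$, contradicting the hypothesis $\lambda \in (0,\lambda^*)$. Therefore no such $u$ exists and $\mathcal{N}_\lambda^0 = \emptyset$. I do not anticipate a genuine analytic obstacle here; the proof is a short assembly of previously established facts, and the only point requiring care is aligning the three equivalences correctly, ensuring that $u \in \mathcal{N}_\lambda$ is read as $R_n(u) = \lambda$ and that the vanishing of $J''_\lambda(u)(u,u)$ is read as the vanishing of $q_n'(1)$, so that the uniqueness in Lemma \ref{l_unique} can be brought to bear.
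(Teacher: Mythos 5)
Your proposal is correct and follows essentially the same route as the paper's own proof: contradiction, translation of $u \in \mathcal{N}_\lambda^0$ into $R_n(u)=\lambda$ and $q_n'(1)=0$ via Proposition \ref{der-Rn} and \eqref{save}, uniqueness of the critical point of $q_n$ from Lemma \ref{l_unique} to force $t_n(u)=1$, and finally $\lambda = \Lambda_n(u) \geq \lambda^*$ contradicting $\lambda < \lambda^*$. The only trivial discrepancy is a citation detail: the inequality $\Lambda_n(u)\geq\lambda^*$ comes directly from the definition of $\lambda^*$ as an infimum rather than from Proposition \ref{as}, which merely asserts that this infimum is attained, positive and finite.
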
 
	\begin{proof}
		The proof follows arguing by contradiction, assuming the existence of $u \in \mathcal{N}_\lambda ^0$ with $\lambda \in (0, \lambda^*)$. As a consequence of Proposition \ref{der-Rn}, we obtain
		\begin{equation*}
			R_n(u) = \lambda\quad \text{and}\quad \dfrac{{\rm d}}{{\rm d}t} R_n(tu) {\Big|_{t=1}} = 0.
		\end{equation*}
		In particular, by using the fact that $t \mapsto q_n(t) $ has an unique critical point, we deduce that $t_n(u) =1$. Therefore, $\lambda < \lambda^* \leq \Lambda_n(u) = \max_{t>0} q_n(t) = R_n(u) = \lambda,$ a contradiction.
	\end{proof}
	Here we make clearer the reason to call $\lambda ^\ast$ a \textit{extremal value} for the use of Rayleigh quocient method.
	\begin{remark}\label{pnevazio}
		$\lambda ^\ast = \inf \{   \lambda >0: \mathcal{N}_{\lambda} ^0 \neq \emptyset\}.$
	\end{remark}
	\begin{proof}
		By Proposition \ref{p_evazio}, it is sufficient to prove that $\mathcal{N}_{\lambda^\ast} ^0 \neq \emptyset.$ We known about the existence of $w \in X \setminus \{0\}$ such that $\Lambda_n(w) = \lambda^* ,$ see Proposition \ref{as}. Hence $\lambda^* = \Lambda_n(w) = R_n(t_n(w)w)$. Remark \ref{rmk1} yields $t_n(w) w \in \mathcal{N}_{\lambda^*}$. Furthermore, by Lemma \ref{l_unique}, the number $t_n(w)$ is unique and we can deduce that  $R_n(t w) < R_n(t_n(w) w) = \Lambda_n(w)  = \lambda^*,$ for each $t \neq t_n(w).$ Particularly, the equation $R_n(t w) = \lambda^*$ has a unique solution $t = t_n(w)$. 
		Using the last assertion we infer that 
		\begin{equation*}
			R_n(t_n(w)w) = \lambda^\ast  \quad \text{and}\quad \dfrac{{\rm d}}{{\rm d}t} R_n(tw) {\Big|_{t=t_n(w)}} = 0.
		\end{equation*}
		Consequently, by Proposition \ref{der-Rn}, we conclude $t_n(w) w \in \mathcal{N}_{\lambda^*}^0$. Therefore, $\mathcal{N}_{\lambda^*}^0 \neq \emptyset$.
	\end{proof}
	
	\begin{remark}
		In what follows we point out some important facts about the topology of the sets $\mathcal{N}_\lambda^{-}$ and $\mathcal{N}_\lambda ^+,$ when $\lambda \in (0, \lambda^*),$
		\begin{enumerate}[label=\roman*)]
			\item There exists $c > 0$ such that $\|u\|_V \geq c,$ for each $u \in \mathcal{N}^-_\lambda$. In particular, $\mathcal{N}_\lambda^-$ is a closed set in $X$.
			\item $\overline{\mathcal{N}_\lambda^+} = \mathcal{N}_\lambda^+ \cup \{0\}$.
		\end{enumerate}
		\begin{proof}
			\textit{i):} Let $u \in \mathcal{N}_\lambda^-$ be a fixed function. Clearly,
			\begin{equation}\label{parteum}
				\|u\|^2_V - \lambda \|u\|_{q,a}^q - \int_{\mathbb{R}^N} b(x) f(u)u \dx = 0,
			\end{equation} 	
			and by \eqref{nehari_neg},
			\begin{equation}\label{partedois}
				2 \|u\|^2_V - \lambda q  \|u\|_{q,a}^q - \int_{\mathbb{R}^N} b(x) \left(f'(u)u^2 + f(u)u \right) \dx < 0.
			\end{equation} 	
			As a consequence, using \eqref{parteum} in \eqref{partedois},
			\begin{equation*}
				(2 -q) \|u\|^2_V <  \int_{\mathbb{R}^N} b(x)  \left( f'(u)u^2 + (1 - q)f(u)u \right) \dx.
			\end{equation*} 
			Once again by using hypotheses \ref{f_um} and \ref{f_dois} we infer that 
			\begin{equation*}
				(2 -q) \|u\|^2_V <	2 \varepsilon \mathcal{C}_2 \| v \| _V^2 + 2 C_\varepsilon \mathcal{C}_p \| v \|^p_V,
			\end{equation*} 
			holds true for each $\varepsilon > 0$. In particular, the existence of $c>0$ with $\|u\|_V \geq c$ is guaranteed. Now let $(u_k) \subset \mathcal{N}^{-}_\lambda$ such that $u_k \rightarrow u_0$ in $X.$ Taking $u = u_k$ in inequality \eqref{partedois} and passing the limit $k\rightarrow \infty $ we have the same inequality but with $u_0.$ By Proposition \ref{p_evazio} this inequality must be a strict one. This finishes the proof.
			
			\textit{ii):} It is easy to see that $\overline{\mathcal{N}_\lambda^+}  \subset \mathcal{N}_\lambda ^+ \cup \{0\}$. It remains to construct a sequence $(u_k) \subset \mathcal{N}_\lambda ^+$ such that $u_k \to 0$ in $X$. In order to do this, consider a sequence $(w_k) \subset  X$ such that $w_k \rightharpoonup 0$ in $X$ and $\|w_k\|_V = 1.$ According to Proposition \ref{compar} there exist $t^{n,+}(w_k) < t^{n,-}(w_k)$ such that $u_k = t^{n,+}(w_k)w_k \in \mathcal{N}_\lambda^+$. Nonetheless,
			\begin{equation}\label{i8}
				0 = J_\lambda'(u_k)(u_k)  = \|u_k\|_V^2 - \lambda\|u_k\|_{q,a}^q - \int_{\mathbb{R}^N} b(x) f(u_k)u_k \dx.
			\end{equation}
			and
			\begin{equation}\label{i10}
				0 < J_\lambda''(u_k)(u_k, u_k)  = 2 \|u_k\|^2_V - \lambda q \|u_k\|_{q,a}^q - \int_{\mathbb{R}^N} b(x) \left(f'(u_k) u_k^2 + f(u_k)u_k \right)\dx.
			\end{equation}
			In particular, using \eqref{i8} in \eqref{i10}, we have
			\begin{equation*}
				(2 -q) \|u_k\|^2_V > \int_{\mathbb{R}^N} b(x) \left(f'(u_k)u_k^2 + (1 -q)f(u_k)u_k\right) \dx. 
			\end{equation*}
			The same ideas discussed in the proof of Proposition \ref{l_boundedbis} ensures that the sequence $(u_k)$ is bounded in $X$. Thus, we have the existence of $t_0 \geq$ 0 such that $t^{n,+}(w_k) \to t_0,$ as $k \to \infty,$ up to a subsequence. On the other hand, in view of \eqref{i8} and the growth condition given in \eqref{condicao_crescimento} we have,
			\begin{align*}
				(t^{n,+}(w_k))^2\|w_k\|_V^2 &= \lambda (t^{n,+}(w_k))^q\|w_k\|_{q,a}^q + \int_{\mathbb{R}^N} b(x) f(t^{n,+}(w_k)w_k)(t^{n,+}(w_k)w_k) \dx \\
				&\leq  \lambda (t^{n,+}(w_k))^q\|w_k\|_{q,a}^q + \varepsilon (t^{n,+}(w_k))^2 \|w_k\|_{2,b}^2 + C_{\varepsilon} (t^{n,+}(w_k))^p \|w_k\|_{p,b}^p.
			\end{align*}
			Therefore, by using the compact embedding $H^s _V (\mathbb{R}^N) \hookrightarrow L ^\theta _b(\mathbb{R}^N)$ for $2\leq \theta < 2^\ast _s$, we obtain 
			\begin{equation*}
				( 1 - c \varepsilon)t^{n,+}(w_k)^{2-q} \|w_k\|_V^2 \leq \lambda \|w_k\|_{q,a}^q + (t^{n,+}(w_k))^{p-q}\|w_k\|_{p,b}^q	 \to 0, \,\, \mbox{as} \,\, k \to \infty,	
			\end{equation*}
			which implies $t^{n,+}(w_k) \to 0$ and $t_0 = 0$. Consequently, $u_k \to 0$ in $X.$
		\end{proof}
	\end{remark}

	\begin{proposition}\label{compar}
		Assume $\lambda \in (0, \lambda^*)$. Then for each $u\in X\setminus \{0\}$ the identity $\lambda = R_n(tu)$ has exactly two distinct roots $0<t^{n,+}(u)<t_n(u)<t^{n,-}(u),$ such that $t^{n,+}(u)u \in \mathcal{N}^+_\lambda$ and $t^{n,-}(u)u \in \mathcal{N}^-_\lambda.$ Furthermore,
		\begin{enumerate}[label=\roman*)]
			\item $t^{n,+}(u)$ and $t^{n,-}(u)$ are the unique critical points of the map $t \mapsto J_\lambda(tu)$;
			\item $t^{n,+}(u)$ is a local minimum and $t^{n,-}(u)$ is a local maximum points for the map $t \mapsto J_\lambda(tu),$ $t >0,$ respectively;
			\item The functionals $u \mapsto t^{n,+}(u)$ and $u \mapsto t^{n,-}(u)$ belong to $C^1(X \setminus \{0\}: \mathbb{R})$.
		\end{enumerate}
	\end{proposition}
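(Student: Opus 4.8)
The plan is to read everything off from the shape of the fibering map $q_n(t)=R_n(tu)$ established in Lemma~\ref{l_unique}, combined with the strict inequality $\lambda<\lambda^\ast\le\Lambda_n(u)$, which holds for every $u\in X\setminus\{0\}$ once $\lambda\in(0,\lambda^\ast)$. First I would record that, since $t_n(u)$ is the \emph{unique} critical point of $q_n$ while $q_n(t)\to 0^+$ as $t\to 0^+$ and $q_n(t)\to-\infty$ as $t\to\infty$, the map $q_n$ must be strictly increasing on $(0,t_n(u))$ and strictly decreasing on $(t_n(u),\infty)$, attaining its maximum value $\Lambda_n(u)$ at $t_n(u)$. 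Because $0<\lambda<\Lambda_n(u)$, the intermediate value theorem together with these monotonicities yields exactly one root $t^{n,+}(u)\in(0,t_n(u))$ on the increasing branch and exactly one root $t^{n,-}(u)\in(t_n(u),\infty)$ on the decreasing branch; these are the only solutions of $R_n(tu)=\lambda$, giving $0<t^{n,+}(u)<t_n(u)<t^{n,-}(u)$.

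Next I would identify the membership in the Nehari subsets. By Remark~\ref{rmk1}, $R_n(tu)=\lambda$ is equivalent to $tu\in\mathcal{N}_\lambda$, so both $t^{n,\pm}(u)u$ lie in $\mathcal{N}_\lambda$. To decide the sign of $J_\lambda''$, I would use the sign of $q_n'$ at each root: by the monotonicity above, $q_n'(t^{n,+}(u))>0$ and $q_n'(t^{n,-}(u))<0$. Since $q_n'(t)=R_n'(tu)u$ and, by Proposition~\ref{der-Rn}, the sign of $R_n'(tu)u$ coincides with that of $J_\lambda''(tu)(tu,tu)$ whenever $R_n(tu)=\lambda$, I conclude $J_\lambda''(t^{n,+}(u)u)(t^{n,+}(u)u,t^{n,+}(u)u)>0$ and $J_\lambda''(t^{n,-}(u)u)(t^{n,-}(u)u,t^{n,-}(u)u)<0$, that is, $t^{n,+}(u)u\in\mathcal{N}_\lambda^+$ and $t^{n,-}(u)u\in\mathcal{N}_\lambda^-$.

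For items i) and ii), set $\gamma(t)=J_\lambda(tu)$. Since $t\gamma'(t)=J_\lambda'(tu)(tu)$, for $t>0$ the identity $\gamma'(t)=0$ is equivalent to $tu\in\mathcal{N}_\lambda$, hence to $R_n(tu)=\lambda$; thus the only critical points of $\gamma$ on $(0,\infty)$ are $t^{n,+}(u)$ and $t^{n,-}(u)$, which is i). Differentiating once more gives $\gamma''(t)=J_\lambda''(tu)(u,u)$, whose sign at $t>0$ matches that of $J_\lambda''(tu)(tu,tu)=t^2\gamma''(t)$; the signs found above then show $\gamma''(t^{n,+}(u))>0$ and $\gamma''(t^{n,-}(u))<0$, so the second-derivative test makes $t^{n,+}(u)$ a local minimum and $t^{n,-}(u)$ a local maximum of $\gamma$, which is ii).

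Finally, for iii) I would invoke the implicit function theorem applied to $\Phi(t,u)=R_n(tu)-\lambda$, which is $C^1$ on $(0,\infty)\times(X\setminus\{0\})$ because $R_n\in C^1$. At a root $t^{n,+}(u)$ one has $\partial_t\Phi=q_n'(t^{n,+}(u))>0$, and at $t^{n,-}(u)$ one has $\partial_t\Phi=q_n'(t^{n,-}(u))<0$; in either case the partial derivative in $t$ is nonvanishing, so the theorem produces a local $C^1$ branch $u\mapsto t^{n,\pm}(u)$, and since the roots are unique by the first step these local branches glue to globally well-defined maps, giving $t^{n,\pm}\in C^1(X\setminus\{0\},\mathbb{R})$. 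The only point demanding genuine care is the very first step, namely extracting strict monotonicity of $q_n$ on each side of $t_n(u)$ from the bare uniqueness of the critical point and the limiting behavior, since everything downstream is a formal consequence of the sign of $q_n'$ at the two roots; this is where I expect to concentrate the argument.
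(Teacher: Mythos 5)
Your proposal is correct and follows essentially the same route as the paper: both use the shape of the fibering map from Lemma \ref{l_unique} together with $\lambda < \lambda^\ast \leq \Lambda_n(u)$ to obtain exactly two roots, then Remark \ref{rmk1}, \eqref{save} and Proposition \ref{der-Rn} to place them in $\mathcal{N}_\lambda^{+}$ and $\mathcal{N}_\lambda^{-}$ and to get the local minimum/maximum, and finally the implicit function theorem applied to $A(t,u)=R_n(tu)-\lambda$ for the $C^1$ regularity of $u \mapsto t^{n,\pm}(u)$. The extra care you devote to extracting strict monotonicity of $q_n$ on each side of $t_n(u)$ from the uniqueness of its critical point is a detail the paper leaves implicit, but it is a routine consequence of the sign-constancy of $q_n'$ on each branch and does not constitute a different approach.
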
 
	\begin{proof} 
		\textit{i):} For each $\lambda \in (0, \lambda^*)$, we obtain $\lambda < \lambda ^\ast \leq \Lambda_n(u) = R_n(t_n(u)u),$ $u \in X \setminus \{0\}.$ Hence, by Lemma \ref{l_unique}, the identity $R_n(tu) = \lambda$ admits exactly two roots $0<t^{n,+}(u)<t_n(u)<t^{n,-}(u),$ for each $u \in X \setminus \{0\},$ and the conclusion follows by Remark \ref{rmk1}. 
		
		\textit{ii):} Notice that $R_n(t^{n,+}(u) u) = \lambda = R_n(t^{n,-}(u)u)$ is verified. Furthermore,
		\begin{equation*}
			\left.\dfrac{\rm d}{\dt} R_n(tu)\right|_{t=t^{n,+} (u)} > 0\qquad\mbox{and}\qquad \left.\dfrac{\rm d}{\dt} R_n(tu)\right|_{t=t^{n,-} (u)} < 0.
		\end{equation*} 
		Hence, by using \eqref{save} and Proposition \ref{der-Rn}, we obtain $t^{n,+} (u) u \in \mathcal{N}_\lambda^+$ and $t^{n,-} (u) u \in \mathcal{N}_\lambda^-$. In particular, $t^{n,+} (u)$ is a local minimum point for the function $t \mapsto J_\lambda(tu)$ and $t^{n,-} (u)$ is a local maximum point for $t \mapsto J_\lambda(tu)$.	
		
		\textit{iii):} Recall that $t \mapsto R_n(tu)$ has an unique critical point which is denoted by $t_n(u)$ with $u \in X \setminus \{0\}$. Now, by using the auxiliary function  $A : (0, \infty) \times (X \setminus \{0\})  \rightarrow  \mathbb{R},$ given by $A(t, u) = R_n(tu) - \lambda ,$ we obtain that $A(t, u) = 0$ if, and only if $\lambda = R_n(t u)$. Furthermore,
		\begin{equation*}
			\frac{\partial}{\partial t} A(t, u) \neq 0,
		\end{equation*}
		for $t = t^{n,+}(u)$ or $t = t^{n,-}(u),$ with $u \in X \setminus \{0\}$. In particular, it follows from implicit function theorem \cite{drabek} that $u \mapsto t^{n,+}(u)$ and $u \mapsto t^ {n,-}(u)$ are in $C^1$ class. This ends the proof. 
	\end{proof}

	Next we define the set
	\begin{equation*}
		\mathcal{E} = \{ u \in X \setminus \{0\}: J_\lambda(u) = 0 \} = \{ u \in X \setminus \{0\}: R_e(u) = \lambda\},
	\end{equation*}
	to obtain a similar result using the Rayleigh quotient $R_e$ instead of $R_n$.
	\begin{proposition}\label{compar2}
		Assume $\lambda \in (0, \lambda_*)$. Then, for each $u\in X\setminus \{0\},$ the equation $\lambda = R_e(tu),$ $t > 0,$ has exactly two distinct roots $0<t^{e,+}(u)<t_e(u)<t^{e,-}(u),$ that is, $t^{e,+}(u)u \in \mathcal{E}$ and $t^{e,-}(u)u \in \mathcal{E}.$ Furthermore, the functionals $u \mapsto t^{e,+}(u)$ and $u \mapsto t^{e,-}(u)$ belong to $C^1(X \setminus \{0\}: \mathbb{R})$.
	\end{proposition}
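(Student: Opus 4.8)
The plan is to mirror the proof of Proposition \ref{compar}, simply replacing the quotient $R_n$ by $R_e$, the maximum point $t_n$ by $t_e$, the extremal value $\lambda^*$ by $\lambda_*$, and Lemma \ref{l_unique} by Lemma \ref{l_uniqueE}. First I would fix $\lambda \in (0, \lambda_*)$ and an arbitrary $u \in X \setminus \{0\}$. By the very definition of $\lambda_* = \inf_{w \in X \setminus \{0\}} \Lambda_e(w)$ together with Proposition \ref{ed2}, one has $\lambda < \lambda_* \leq \Lambda_e(u) = R_e(t_e(u)u) = \max_{t > 0} q_e(t)$. Thus the level $\lambda$ lies strictly below the global maximum of the fibering map $q_e(t) = R_e(tu)$.

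Next I would invoke Lemma \ref{l_uniqueE}, which provides the full shape of $q_e$: namely $\lim_{t \to 0^+} q_e(t) = 0$, $q_e(t) > 0$ for small $t > 0$, $\lim_{t \to \infty} q_e(t) = -\infty$, and $t_e(u)$ is the unique critical point, being a global maximum. Since $t_e(u)$ is the only critical point of the $C^1$ map $q_e$ and it is a maximum, $q_e$ is strictly increasing on $(0, t_e(u))$ and strictly decreasing on $(t_e(u), \infty)$. Because $0 < \lambda < q_e(t_e(u))$, the intermediate value theorem applied on each of these two intervals, combined with strict monotonicity, yields exactly one root on each side. This produces precisely two solutions $0 < t^{e,+}(u) < t_e(u) < t^{e,-}(u)$ of the equation $R_e(tu) = \lambda$. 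Finally, Remark \ref{cl} gives $R_e(w) = \lambda$ if and only if $J_\lambda(w) = 0$, so both $t^{e,+}(u)u$ and $t^{e,-}(u)u$ belong to $\mathcal{E}$.

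For the $C^1$ dependence I would argue exactly as in part iii) of Proposition \ref{compar}, applying the implicit function theorem to the auxiliary map $A_e : (0, \infty) \times (X \setminus \{0\}) \to \mathbb{R}$ defined by $A_e(t, u) = R_e(tu) - \lambda$, noting that $R_e \in C^1(X \setminus \{0\}, \mathbb{R})$. The decisive point is that $\tfrac{\partial}{\partial t} A_e(t, u) = q_e'(t) \neq 0$ at both roots: indeed $q_e'(t^{e,+}(u)) > 0$ and $q_e'(t^{e,-}(u)) < 0$, since each root lies strictly to one side of the unique critical point $t_e(u)$ where $q_e$ is strictly monotone. Hence the implicit function theorem applies at each root and delivers $u \mapsto t^{e,+}(u)$ and $u \mapsto t^{e,-}(u)$ in $C^1(X \setminus \{0\} : \mathbb{R})$.

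I do not anticipate a genuine obstacle here, as all the analytic content has already been established upstream: the bound $\lambda < \lambda_* \leq \Lambda_e(u)$ comes directly from the infimum definition together with Proposition \ref{ed2}, while the global profile and strict unimodality of $q_e$ come from Lemma \ref{l_uniqueE}. The only point requiring a touch of care is to emphasize that the inequality $\lambda < \Lambda_e(u)$ holds for \emph{every} $u \in X \setminus \{0\}$ (not merely along a minimizing sequence), which is exactly what the definition of $\lambda_*$ as an infimum guarantees; the remainder is the intermediate value theorem plus the implicit function theorem.
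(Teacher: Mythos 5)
Your proposal is correct and takes essentially the same route as the paper: the paper's proof is a one-line reduction to Proposition \ref{compar}, replacing Lemma \ref{l_unique} by Lemma \ref{l_uniqueE} and using $\lambda < \lambda_* \leq \Lambda_e(u)$, which is precisely what you carry out in detail (unimodality of $q_e$ plus the intermediate value theorem for the two roots, Remark \ref{cl} for membership in $\mathcal{E}$, and the implicit function theorem applied to $A_e(t,u) = R_e(tu) - \lambda$ for the $C^1$ regularity).
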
 
	\begin{proof} 
		Follows the same lines discussed in the proof of Proposition \ref{compar}, where instead we use Lemma \ref{l_uniqueE} and the fact that $\lambda <\lambda _\ast \leq \Lambda _e (u).$
	\end{proof}
	\begin{remark}\label{r_chata}
		\begin{enumerate}[label=\roman*)]
			\item If $G\in C^1 (\mathbb{R}:\mathbb{R}),$ given in Lemma \ref{crucial}, satisfies $G'(t)>0,$ for $t>0,$ and $G'(t)<0,$ for $t<0,$ then the map $u \mapsto t_{e}(u)$ belongs to $C^1(X \setminus \{0\}: \mathbb{R})$.
			\item Analogously, if $H \in C^1 (\mathbb{R}:\mathbb{R})$ (Remark \ref{r_aha}), is such that $H'(t)>0,$ for $t>0,$ and $H'(t)<0,$ for $t<0,$ we also have that $u \mapsto t_{n}(u)$ is in $C^1(X \setminus \{0\}: \mathbb{R})$.
		\end{enumerate}
	\end{remark} 
	\begin{proof}
		\textit{i):} Consider the function $\mathcal{B} : (0, \infty) \times (X \setminus \{0\}) \to \mathbb{R}$ given by 
		\begin{equation*}
			\mathcal{B}(t, u) =	\frac{(2 -q)}{2}\|u\|_V^2  - \int_{\mathbb{R}^N} b(x)\left[\frac{f(t u)}{t u}  -q \frac{F(t u)}{(t u)^2}  \right] u^2\dx.
		\end{equation*}
		It is easy to see that $\mathcal{B}$ is in $C^1$ class and $\mathcal{B}(t,u) = 0$ if, and only if, $t = t_e(u)$ where $u \in X \setminus \{0\}$, see Lemma \ref{l_uniqueE} and Corollary \ref{cor1}. On the other hand,
		\begin{equation*}
			G(t) = \frac{f(t)}{t} - q \frac{F(t)}{t^2},
		\end{equation*}
		is increasing for $t >  0,$ and decreasing for $t<0.$ As a consequence, for $t > 0$ Fatou's lemma yields
		\begin{equation*}
			\partial_{t} \mathcal{B}(t,u) = - \dfrac{\partial}{\partial t}  \int_{\mathbb{R}^N} b(x)\left[\frac{f(t u)}{t u}  -q \frac{F(t u)}{(t u)^2}  \right] u^2\dx
			\leq - \int_{\mathbb{R}^N} b(x) \dfrac{\partial}{\partial t} \left[\frac{f(t u)}{t u}  -q \frac{F(t u)}{(t u)^2}  \right] u^2\dx < 0.
		\end{equation*}
		According to implicity function theorem \cite{drabek}, we have $t_e \in C^1 (X \setminus \{0\} : \mathbb{R}).$
		
		\textit{ii):} Now, assuming that $t \mapsto f'(t) + (1-q) f(t)/t$ is in $C^1$ class, we can also consider $\mathcal{C} : (0, \infty) \times (X \setminus \{0\}) \to \mathbb{R}$ defined by
		\begin{equation*}
			\mathcal{C}(t,u) = (2-q)\| u \|_V^2 - \int _{\mathbb{R}^N } b(x) \left[ f'(t u ) +(1-q) \frac{f(t u )}{t u } \right] u^2 \dx.
		\end{equation*}
		Like before, $\mathcal{C}$ is in $C^1$ class and $\mathcal{C}(t,u) = 0$ if, and only if, $t = t_n(u)$ (Lemma \ref{l_unique} and Corollary \ref{c_identidade}). At this point, using the same ideas employed just above, we get $t_n \in C^1( X \setminus \{0\}:\mathbb{R}).$ This ends the proof. 
	\end{proof}
	\begin{remark}\label{ajuda1}
		By the definition of $\lambda^*,$
		\begin{equation*}
			U_n := \{ u \in X \setminus \{0\} : \Lambda_n(u) > \lambda \}=X \setminus \{0\},\quad \lambda \in (0, \lambda^*).
		\end{equation*}
		Equivalently, any nonzero function $u \in X$ admits projections $0 < t^{n,+}(u) < t^{n,-}(u) < \infty,$ whenever $\lambda \in (0, \lambda^*)$ (see Proposition \ref{compar}). If $\lambda = \lambda^*,$ then $U_n \subsetneq X \setminus \{ 0 \}$ and $\mathcal{N}_{\lambda^*}^0 \neq \emptyset,$ as one can see in Remark \ref{pnevazio}. This fact give us many difficulties in order to apply the Rayleigh quotient method. For instance, Propositions \ref{p_evazio} and \ref{compar} do not hold anymore in general, and the proof of our main results fails (see Section \ref{s_main}). Summing up, it remains as an open problem to find existence of ground state solutions for Eq. \eqref{P}, when $\lambda \geq \lambda ^\ast.$ Nevertheless, the study of the case $\lambda \geq \lambda^*$ still can be fruitful: Under the conditions of Remarks \ref{zero} and \ref{r_chata}, by Corollary \ref{c_identidade}, $\Lambda_n$ is a continuous and zero homogeneuous function, which leads $U_n$ to be an open cone set. 
	\end{remark}
	
	\begin{remark}
		In a similar fashion,
		\begin{equation*}
			U_e := \{ u \in X \setminus \{0\} : \Lambda_e(u) > \lambda \} = X \setminus \{0\},\quad  \lambda \in (0, \lambda_*).
		\end{equation*}
		Thus, by Proposition \ref{compar2}, any nonzero function $u \in X$ admits projections $0 < t^{e,+}(u) < t^{e,-}(u) < \infty,$ when $\lambda \in (0, \lambda_*)$. Moreover, by Corollary \ref{cor1} and Remark \ref{r_chata}, the functional $u \mapsto \Lambda_e(u)$ is $C^1,$ and thanks to Remark \ref{r_lambdahomo}, we also get that $U_e$ is an open cone set in $X \setminus \{0\}$.
	\end{remark}
	
	\begin{remark} $X \setminus U_e \neq \emptyset,$ for any $\lambda \in [\lambda _\ast, \lambda ^\ast).$ More precisely:
		\begin{enumerate}[label=\roman*)]
			\item Let $\lambda \in (\lambda_*, \lambda^*)$ and $w \in X \setminus \{0\}$ with $\Lambda_e(w) = \lambda_*$. There is a unique $t^{n,-}(w) > 0$ with $t^{n,-}(w) w \in \mathcal{N}_\lambda^-.$ Hence
			\begin{equation*}
				R_n(t^{n,-}(w) w) = \lambda > \lambda_* = R_e(t_e(w)w) \geq R_e(t^{n,-}(w) w),
			\end{equation*}
			and $J_{\lambda}(t^{n,-}(w) w) < J_{R_e(t^{n,-}(w) w)}(t^{n,-}(w) w) < 0$. Using Remark \ref{r_lambdahomo}, $\Lambda_e(t^{n,-}(w) w) = \Lambda_e(w) = \lambda_* < \lambda$. Summing up, $t^{n,-}(w) w$ and $t_e(w)w $ belongs to $X \setminus U_e.$
			\item For $\lambda = \lambda_*,$ take $w$ as above. Additionally, since $R_n(t w) = \lambda = \lambda _\ast =R_e (t_e(w) w)$ if, and only if $t = t_e(w),$ we have $J_\lambda( t_e(w) w) = J_{R_e(t_e(w)w)} (t_e(w)w) = 0$.
		\end{enumerate}
	\end{remark}
	When $\lambda \geq \lambda_\ast,$ we have $U_e \subsetneq X \setminus \{ 0 \}.$ In what follows we establish the behavior of $u \mapsto \Lambda_e(u)$ together with an analysis of $\lambda = R_e(tu),$ $t > 0.$
	\begin{lemma}\label{l_usar3}
		Suppose $\lambda \in [\lambda_*, \lambda^*)$ and let $u \in X \setminus \{0\}.$ Then,
		\begin{enumerate}[label=\roman*)]
			\item For $\Lambda_e(u) = \lambda,$ the equation $R_e(tu) = \lambda$ admits a unique solution $t = t_e(u)$;
			\item When $\Lambda_e(u) > \lambda,$ there are $0 < t^{e,+}(u) < t^{e,-}(u) < \infty$ such that $t^{e,+}(u)u \in \mathcal{E}$ and $t^{e,-}(u)u \in \mathcal{E};$
			\item For $\Lambda_e(u) < \lambda$, the equation $R_e(tu) = \lambda$ does not have any solution $t > 0$.
		\end{enumerate}
	\end{lemma}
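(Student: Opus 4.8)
The plan is to reduce all three items to the qualitative shape of the one--variable fibering map $q_e(t)=R_e(tu)$, which has already been determined in Lemma \ref{l_uniqueE}, together with the identity $\Lambda_e(u)=q_e(t_e(u))=\max_{t>0}q_e(t)$ coming from the definition of $\Lambda_e$. The only genuine input is the monotonicity profile of $q_e$; once this is recorded, each case is a routine application of the intermediate value theorem together with Remark \ref{cl}.

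First I would establish the profile. By Lemma \ref{l_uniqueE}, $\lim_{t\to 0^+}q_e(t)=0$, $q_e$ is positive for small $t>0$, $q_e(t)\to-\infty$ as $t\to\infty$, and $t_e(u)$ is the \emph{unique} critical point of $q_e$ as well as its global maximum. Since $q_e'$ is continuous and vanishes only at $t_e(u)$, it keeps a constant sign on each of $(0,t_e(u))$ and $(t_e(u),\infty)$; the boundary behaviors $q_e(0^+)=0<\Lambda_e(u)$ and $q_e(t)\to-\infty$ force $q_e$ to be strictly increasing on the first interval and strictly decreasing on the second. In particular $q_e(t)<q_e(t_e(u))=\Lambda_e(u)$ for every $t\neq t_e(u)$.

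Granting this profile, the three items follow directly. For item i), when $\lambda=\Lambda_e(u)$ the equation $q_e(t)=\lambda$ asks for the points where $q_e$ attains its maximum, and by the strict monotonicity on each side of $t_e(u)$ this maximum is attained only at $t_e(u)$, giving the unique solution $t=t_e(u)$. For item ii), when $\Lambda_e(u)>\lambda$ I use that $\lambda\geq\lambda_*>0$ by Proposition \ref{ed2}: on the increasing branch $q_e$ runs continuously from $q_e(0^+)=0<\lambda$ up to $\Lambda_e(u)>\lambda$, producing a unique root $t^{e,+}(u)\in(0,t_e(u))$, while on the decreasing branch $q_e$ runs from $\Lambda_e(u)>\lambda$ down to $-\infty$, producing a unique root $t^{e,-}(u)\in(t_e(u),\infty)$; hence $0<t^{e,+}(u)<t_e(u)<t^{e,-}(u)<\infty$. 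Since $R_e(t^{e,\pm}(u)u)=\lambda$, Remark \ref{cl} yields $J_\lambda(t^{e,\pm}(u)u)=0$, i.e. $t^{e,\pm}(u)u\in\mathcal{E}$. Finally, for item iii), when $\Lambda_e(u)<\lambda$ the profile gives $q_e(t)\leq\Lambda_e(u)<\lambda$ for all $t>0$, so $q_e(t)=\lambda$ has no solution.

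I do not anticipate a real obstacle, since the statement is essentially a bookkeeping consequence of Lemma \ref{l_uniqueE}. The one point that uses the hypothesis rather than merely the comparison with $\Lambda_e(u)$ is the strict positivity $\lambda>0$ needed in item ii) to place the left root strictly inside $(0,t_e(u))$; this is exactly what the lower bound $\lambda\geq\lambda_*$ (with $\lambda_*>0$) secures. The upper bound $\lambda<\lambda^*$ plays no role in the present lemma beyond fixing the regime of interest.
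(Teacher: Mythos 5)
Your proposal is correct and follows essentially the same route as the paper: both arguments rest entirely on the fibering-map profile of $q_e$ from Lemma \ref{l_uniqueE} (unique critical point which is the global maximum, $q_e(0^+)=0$, $q_e\to-\infty$), with item i) and iii) read off from the maximum being strict and item ii) obtained by the two-root intermediate-value argument that the paper delegates to Proposition \ref{compar2}. Your explicit remark that $\lambda\geq\lambda_*>0$ is what places the left root strictly inside $(0,t_e(u))$ is a correct and worthwhile detail that the paper leaves implicit.
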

	\begin{proof}
		\textit{i):} Let $u \in X \setminus \{0\}$ such that $\Lambda_e(u) = \lambda$. Because $t \mapsto R_e(tu)$ has a unique maximum point $t = t_e(u),$ we have $R_e(t u) < R_e(t_e(u)u) = \Lambda_e(u) = \lambda ,$ for any $t \neq t_e(u)$.
		
		\textit{ii):} The proof is the same of Proposition \ref{compar2}, where is enough to use the fact that $\Lambda_e(u) > \lambda$. Hence the equation $R_e(tu) = \lambda$ has exactly two roots.
		
		\textit{iii):} If $\Lambda_e(u) < \lambda,$ then $R_e(tu)  \leq \Lambda _e(u) <\lambda,$ for any $t > 0$. This ends the proof.
	\end{proof}
	\begin{proposition}\label{sinal}
		Assume $\lambda \in (0, \lambda^*)$ and fix $u\in X\setminus \{0\}.$ The following statements hold:
		\begin{enumerate}[label=\roman*)]
			\item If $\lambda \in (0, \lambda_*),$ then $t^{n,-}(u)>0$ is a global maximum point for the map $t \mapsto J_\lambda(tu),$ $t > 0.$ More precisely, $\max_{t >  0} J_\lambda(tu)  = J_\lambda(t^{n,-}(u) u)  > 0$. 
			\item For $\lambda = \lambda_*,$  the number $t^{n,-}(u)>0$ is only a local maximum point for the map $t \mapsto J_\lambda(tu),$ $t>0.$ In this case, $ \max_{t >  0} J_\lambda(tu)  = J_\lambda(t^{n,-}(u) u) = 0$.
			\item Suppose $\lambda \in (\lambda_*, \lambda^*).$ 
			\begin{enumerate}[label=\alph*)]
				\item If $\Lambda_e(u) > \lambda,$ then $t^{n,-}(u)>0$ is a global maximum point for the map $t \mapsto J_\lambda(tu),$ $t> 0,$ with $\max_{t >  0} J_\lambda(tu) = J_\lambda(t^{n,-}(u) u) > 0$.
				\item If $\Lambda_e(u) = \lambda,$ then $t^{n,-}(u)>0$ is only a local maximum point for the map $t \mapsto J_\lambda(tu),$ $t > 0.$ Moreover,  $\max_{t >  0} J_\lambda(tu) = J_\lambda(t^{n,-}(u) u) = 0.$
				\item If $\Lambda_e(u) < \lambda,$ then $t^{n,-}(u)>0$ is only a local maximum point for the map $t \mapsto J_\lambda(tu),$ $t > 0.$ Furthermore, $\max_{t >  0} J_\lambda(tu) = J_\lambda(t^{n,-}(u) u) < 0.$
			\end{enumerate}
		\end{enumerate}
	\end{proposition}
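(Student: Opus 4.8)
The plan is to study the fibering map $\gamma(t) := J_\lambda(tu)$, $t > 0$, and to extract both the location and the sign of its maximum from the comparison between $R_n$ and $R_e$ set up in Proposition \ref{ed4}. By Proposition \ref{compar}, for $\lambda \in (0,\lambda^*)$ the map $\gamma$ has exactly two critical points $0 < t^{n,+}(u) < t^{n,-}(u)$, with $t^{n,+}(u)$ a local minimum and $t^{n,-}(u)$ a local maximum. Moreover $\gamma(0) = 0$, while Corollary \ref{c_compj} gives $\gamma(t) \to -\infty$ as $t \to \infty$; and since $q \in (1,2)$ one checks that $\gamma(t) = -\tfrac{\lambda}{q}\|u\|_{q,a}^q\,t^{q} + o(t^{q})$ as $t \to 0^+$, so $\gamma$ is negative just to the right of the origin. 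Consequently the behaviour of $\gamma$ on $(0,\infty)$ is completely determined by the single number $J_\lambda(t^{n,-}(u)u)$ together with its position relative to the trivial level $\gamma(0)=0$, and the whole proposition reduces to computing the sign of this number.

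First I would determine that sign. By Remark \ref{cl} it coincides with the sign of $R_e(t^{n,-}(u)u) - \lambda$, so it is enough to locate $t^{n,-}(u)$ with respect to the maximizer $t_e(u)$ of $q_e$. For this I use the identity of Proposition \ref{ed4}, namely $q_n(t) - q_e(t) = \tfrac{t}{q}q_e'(t)$. Evaluating at $t = t^{n,-}(u)$, where $q_n(t^{n,-}(u)) = R_n(t^{n,-}(u)u) = \lambda$, gives
\begin{equation*}
	R_e\bigl(t^{n,-}(u)u\bigr) = q_e\bigl(t^{n,-}(u)\bigr) = \lambda - \frac{t^{n,-}(u)}{q}\,q_e'\bigl(t^{n,-}(u)\bigr).
\end{equation*}
Because $q_e$ is increasing on $(0,t_e(u))$ and decreasing on $(t_e(u),\infty)$ by Lemma \ref{l_uniqueE}, the sign of $q_e'(t^{n,-}(u))$, and hence of $J_\lambda(t^{n,-}(u)u)$, is governed by whether $t^{n,-}(u)$ lies to the left of, at, or to the right of $t_e(u)$.

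The main obstacle, and the heart of the argument, is thus the comparison of $t^{n,-}(u)$ with $t_e(u)$, which I would settle using the monotonicity of $q_n$ beyond its peak. By Proposition \ref{p_desiboa} we have $t_n(u) < t_e(u)$, so $q_n$ is strictly decreasing on $(t_n(u),\infty)$, an interval containing both $t_e(u)$ and $t^{n,-}(u)$. Combining $q_n(t_e(u)) = q_e(t_e(u)) = \Lambda_e(u)$ (Proposition \ref{ed4}) with $q_n(t^{n,-}(u)) = \lambda$, strict monotonicity yields the equivalences: $t^{n,-}(u) > t_e(u)$, $=t_e(u)$, or $<t_e(u)$ precisely when $\Lambda_e(u) > \lambda$, $=\lambda$, or $<\lambda$. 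Substituting back into the displayed identity gives $J_\lambda(t^{n,-}(u)u) > 0$, $=0$, or $<0$ according as $\Lambda_e(u) > \lambda$, $=\lambda$, or $<\lambda$.

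It then remains to convert these signs into the stated maximum statements by comparing $J_\lambda(t^{n,-}(u)u)$ with $\gamma(0)=0$. When $\Lambda_e(u) > \lambda$, we have $\gamma(t^{n,-}(u)) > 0 = \gamma(0)$, and since $t^{n,-}(u)$ is the unique local maximum while $\gamma \to -\infty$, it is a global maximum of positive value; as $\lambda \in (0,\lambda_*)$ forces $\lambda < \lambda_* \le \Lambda_e(u)$ for every $u$, this regime always occurs and proves item (i), and it likewise proves (iii)(a). The borderline regime $\Lambda_e(u)=\lambda$ yields $\max_{t>0}J_\lambda(tu)=J_\lambda(t^{n,-}(u)u)=0$, covering item (iii)(b) and, at $\lambda=\lambda_*$, the minimizers realizing $\lambda_*=\inf\Lambda_e$ of item (ii). Finally, when $\Lambda_e(u) < \lambda$ the local maximum value is negative, so it is dominated by the boundary level $\gamma(0)=0$ and $t^{n,-}(u)$ is only a local maximum, yielding item (iii)(c). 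The delicate point throughout is the strict-monotonicity comparison of the previous paragraph, since it alone relies on the already nontrivial facts $t_n(u)<t_e(u)$ and the single-peak structure of $q_n$; once that is in hand, the rest is bookkeeping with Remark \ref{cl}, Proposition \ref{ed4} and Lemma \ref{l_uniqueE}.
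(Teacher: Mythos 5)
Your proposal is correct and follows essentially the same route as the paper: the key step, locating $t^{n,-}(u)$ relative to $t_e(u)$ via the strict monotonicity of $q_n$ past its unique maximum (using $t_n(u)<t_e(u)$ from Proposition \ref{p_desiboa} and $q_n(t_e(u))=q_e(t_e(u))=\Lambda_e(u)$ from Proposition \ref{ed4}), then converting the sign of $R_e(t^{n,-}(u)u)-\lambda$ into the sign of $J_\lambda(t^{n,-}(u)u)$ through Remark \ref{cl} and finishing with Corollary \ref{c_compj}, is exactly the paper's mechanism, merely organized as one direct trichotomy instead of the paper's case-by-case contradiction arguments.

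One remark on item (ii): your trichotomy yields $\max_{t>0}J_{\lambda_*}(tu)=J_{\lambda_*}(t^{n,-}(u)u)=0$ only when $\Lambda_e(u)=\lambda_*$, i.e.\ for minimizers of $\Lambda_e$, and in fact it shows that for $u$ with $\Lambda_e(u)>\lambda_*$ the maximum at $\lambda=\lambda_*$ is strictly positive. This is not a defect of your argument but of the statement itself: the paper's own proof of (ii) carries the same hidden restriction, since its assertion that the simultaneous equation $R_e(tu)=\lambda=R_n(tu)$ has the solution $t=t_e(u)$ already presupposes $q_e(t_e(u))=\lambda$, that is, $\Lambda_e(u)=\lambda_*$. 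So your explicit restriction to the minimizers realizing $\lambda_*$ matches what the paper actually proves, and is the more honest formulation.
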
 
	\begin{proof} 
		\textit{i):} Consider $\lambda \in (0, \lambda_*)$ and $v = t^{n,-}(u)u \in \mathcal{N}_\lambda ^-$. First let us prove that $t_n(u) < t_e (u) < t^{n,-}(u).$ In fact, if $t^{n,-}(u) \leq t_e (u),$ by definition $R_e(t^{n,-}(u) u ) \leq R_e(t_e (u) u ),$ and using the fact that $q_n$ is decreasing for $t>t_n(u),$ we have $R_n(t^{n,-}(u) u ) \geq R_n (t_e (u) u ).$ By Proposition \ref{ed4}, we have the following contradiction:
		\begin{equation*}
			\lambda = R_n(t^{n,-}(u) u ) \geq R_n (t_e (u) u )   = R_e(t_e(u) u ) = \Lambda _e(u) \geq \lambda _\ast.
		\end{equation*}
		Moreover, Proposition \ref{ed4} also leads to $\lambda = R_n(v) < R_e(v).$ As a consequence, by using Remark \ref{cl}, we obtain $J_{\lambda}(v) > J_{R_e(v)}(v) = 0$. Since $t^{n,-}(u)$ and $t^{n,+}(u)$ are the unique two critical points of $t \mapsto J_\lambda(tu),$ by Corollary \ref{c_compj}, we conclude that $\max_{t > 0} J_\lambda(tu)  = J_\lambda(t^{n,-}(u) u) > J_\lambda (0) = 0$.
		
		\textit{ii):} Since $\lambda = \lambda_* < \lambda^*,$ by Proposition \ref{compar}, there are $t^{n,+}(u)<t^{n,-}(u)$ solutions of $R_n(tu) = \lambda.$ On the other hand, from Proposition \ref{ed4}, the equation $R_e(t u) = \lambda= R_n(tu)$ has an unique solution $t = t_e(u).$ Because of the uniqueness given in Proposition \ref{compar}, $t_e(u) = t^{n,+}(u)$ or $t_e(u) = t^{n,-}(u).$ However, $t^{n,+}(u) < t_n(u) < t_e(u),$ which implies in $t_e(u) = t^{n,-}(u).$ The last assertion says that $J_\lambda(t^{n,-}(u)u) = J_{R_e(t^{n,-}(u)u)}(t^{n,-}(u)u) = 0.$
		
		\textit{iii)--a):} According to Lemma \ref{l_usar3} there are $0 < t^{e,+}(u) < t_e(u) < t^{e,-}(u) < \infty,$ with $t^{e,+}(u)u \in \mathcal{E}$ and $t^{e,-}(u)u \in \mathcal{E}$. We are going to prove that $t_e(u) < t^{n,-}(u) < t^{e,-}(u)$. In fact, assuming $t_e(u) \geq t^{n,-}(u)$, we have the contradiction: $\lambda < \Lambda_e(u) = R_e(t_e(u)u) = R_n(t_e(u) u) \leq R_n(t^{n,-}(u)) = \lambda$. Here we used Proposition \ref{ed4} together with the fact that $t \mapsto R_n(tu)$ is a decreasing function, for each $t > t_n(u)$. It remains to ensure that $t^{n,-}(u) <  t^{e,-}(u),$ where the proof follows again by a contradiction argument. If $t^{n,-}(u) \geq  t^{e,-}(u),$ using that $t \mapsto R_e(tu)$ is a decreasing function for each $t > t_e(u)$ and Proposition \ref{ed4}, we get $\lambda = R_e(t^{e,-}(u)u) \geq R_e(t^{n,-}(u)u) > R_n(t^{n,-}(u)u)= \lambda,$ which is impossible. Next, using Proposition \ref{ed4} again, we obtain $\lambda = R_n(t^{n,-}(u) u) < R_e( t^{n,-}(u) u)$. The last assertion implies $J_\lambda(t^{n,-}(u) u) > J_{R_e(t^{n,-}(u) u)}(t^{n,-}(u) u) = 0$ and the conclusion follows by Corollary \ref{c_compj}.

		\textit{iii)--b):} By Lemma \ref{l_usar3}, there exists a unique $t = t_e(u)$ such that $R_e(t u) = \lambda.$ Hence $R_e(t_e(u)u) = \lambda = R_n(t^{n,-}(u)u)$. Arguing as above, we have $t_e(u) = t^{n,-}(u)$ and $J_\lambda(t^{n,-}(u) u) = J_{R_e(t_e(u)u)} (t_e(u)u) = 0$.

		\textit{iii)--c):} In this case, $R_e(t^{n,-} (u) u ) \leq \Lambda _e (u) < \lambda.$ Thus $J_\lambda (t^{n,-} (u) u ) < J_{R_e(t^{n,-} (u) u )} (t^{n,-} (u) u ) = 0.$ This ends the proof.
	\end{proof}

	\begin{remark}\label{obs}
		Clearly, as a consequence of Proposition \ref{sinal}, if $\lambda \in (0, \lambda^*)$ and $u\in X\setminus \{0\},$ then
		\begin{equation*}
			\max_{t > 0} J_\lambda(t u) = \max \left\lbrace  0, J_\lambda(t^{n,-}(u)u) \right\rbrace .
		\end{equation*}
	\end{remark}

	Now we prove that any function $u \in \mathcal{N}_\lambda^+$ has a negative energy.
	
	\begin{proposition}\label{importante_c+}
		Assume $\lambda \in (0, \infty)$. Then $J_\lambda(v) < 0,$ for any $v \in \mathcal{N}^+_\lambda.$ In particular, $c_{\mathcal{N}_\lambda^+} < 0$.
	\end{proposition}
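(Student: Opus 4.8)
The plan is to fix $v \in \mathcal{N}_\lambda^+$ and analyze the two fibers $\gamma(t) := J_\lambda(tv)$ and $q_n(t) := R_n(tv)$ jointly, exploiting that the condition $J''_\lambda(v)(v,v) > 0$ forces $t=1$ to sit strictly to the left of the unique maximum of $q_n$. I first want to stress \emph{why} a purely algebraic attempt is not enough: substituting the Nehari identity $\|v\|_V^2 = \lambda\|v\|_{q,a}^q + \int_{\mathbb{R}^N} b(x)f(v)v\dx$ into $J_\lambda(v)$ gives only
\begin{equation*}
	J_\lambda(v) = \frac{q-2}{2q}\,\lambda\|v\|_{q,a}^q + \int_{\mathbb{R}^N} b(x)\left(\tfrac{1}{2}f(v)v - F(v)\right)\dx,
\end{equation*}
whose second term is in general \emph{positive} under the superlinearity \ref{f_dois} (already for powers $|t|^{p-2}t$ with $p>2$). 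Hence the membership $v \in \mathcal{N}_\lambda^+$ must enter in an essential way, and this is the crux of the argument.

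First I would record that $\gamma(0) = J_\lambda(0) = 0$, and that, since $R_n$ and therefore $q_n$ do not depend on $\lambda$, Lemma \ref{l_unique} applies verbatim: $q_n$ is strictly increasing on $(0, t_n(v))$ and strictly decreasing on $(t_n(v), \infty)$, where $t_n(v)$ is its unique critical point. Because $v \in \mathcal{N}_\lambda$ we have $q_n(1) = R_n(v) = \lambda$, and because $v \in \mathcal{N}_\lambda^+$ the identity \eqref{save} together with Proposition \ref{der-Rn} yields $q_n'(1) = R_n'(v)v > 0$. By the unimodal shape of $q_n$ this is possible only if $1 < t_n(v)$, so $q_n$ is strictly increasing on $(0,1]$ and consequently $R_n(tv) = q_n(t) < q_n(1) = \lambda$ for every $t \in (0,1)$.

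Next I would transfer this strict inequality to the energy fiber. For $t>0$ one has $\gamma'(t) = t^{-1}J'_\lambda(tv)(tv)$, and Remark \ref{rmk1} identifies the sign of $J'_\lambda(tv)(tv)$ with that of $R_n(tv) - \lambda$; thus $R_n(tv) < \lambda$ on $(0,1)$ gives $\gamma'(t) < 0$ there. Therefore $\gamma$ is strictly decreasing on $[0,1]$, and combining this with $\gamma(0)=0$ produces $J_\lambda(v) = \gamma(1) < \gamma(0) = 0$, which is exactly the assertion.

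Finally, the stated inequality for the infimum is immediate: whenever $\mathcal{N}_\lambda^+ \neq \emptyset$ — for instance for $\lambda \in (0,\lambda^*)$, where Proposition \ref{compar} furnishes $t^{n,+}(u)u \in \mathcal{N}_\lambda^+$ — one fixes any $v_0 \in \mathcal{N}_\lambda^+$ and estimates $c_{\mathcal{N}_\lambda^+} = \inf_{w \in \mathcal{N}_\lambda^+}J_\lambda(w) \leq J_\lambda(v_0) < 0$. The only delicate point in the whole argument is the implication $q_n'(1) > 0 \Rightarrow 1 < t_n(v)$, which rests entirely on the uniqueness of the critical point of $q_n$ established in Lemma \ref{l_unique}; once that is secured, the remainder is sign bookkeeping through the correspondences already proved.
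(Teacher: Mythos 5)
Your proof is correct, but it follows a genuinely different route from the paper's. The paper never integrates the fiber $\gamma(t)=J_\lambda(tv)$: it notes (citing Proposition \ref{compar}) that $v \in \mathcal{N}_\lambda^+$ forces $t^{n,+}(v)=1$, hence $1 < t_n(v) < t_e(v)$ by Proposition \ref{p_desiboa}, and then applies the comparison between the \emph{two} Rayleigh quotients (Proposition \ref{ed4}: $q_n > q_e$ on $(0,t_e(v))$) at the single point $t=1$ to get $R_e(v) < R_n(v) = \lambda$, after which Remark \ref{cl} converts $R_e(v)<\lambda$ into $J_\lambda(v)<0$. You instead stay entirely inside the $R_n$--picture: from $q_n(1)=\lambda$, $q_n'(1)>0$ (via \eqref{save} and Proposition \ref{der-Rn}) and the unimodality of Lemma \ref{l_unique}, you deduce $R_n(tv)<\lambda$ on $(0,1)$, translate this through Remark \ref{rmk1} into $\gamma'<0$ on $(0,1)$, and integrate from $\gamma(0)=0$. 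What the paper's route buys is a one-line conclusion once the $R_e$-machinery is in place --- machinery it reuses heavily in Propositions \ref{sinal} and \ref{p_sinal}. What your route buys is economy and fidelity to the stated hypothesis: none of your ingredients restricts $\lambda$, whereas the paper's appeal to Proposition \ref{compar} is, as literally stated, licensed only for $\lambda \in (0,\lambda^*)$ even though the proposition claims all $\lambda \in (0,\infty)$ (a cosmetic gap, since the ordering $1<t_n(v)$ can be recovered exactly as you recover it). Your explicit caveat that the ``in particular'' clause presupposes $\mathcal{N}_\lambda^+ \neq \emptyset$ is also a point the paper leaves implicit.
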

	\begin{proof}
		Using Proposition \ref{compar} and the definition of $\mathcal{N}_\lambda,$ one have $t^{n,+}(v) = 1$ and $R_n(v) = \lambda$. According to Proposition \ref{ed4}, it holds $R_n(tv) > R_e(tv),$ for each $t < t_e(v)$. In particular, since $1 = t^{n,+}(u) < t_n(u) < t_e(u)$, we have $R_e(v) < R_n(v) = \lambda$. This assertion together with Remark \ref{cl} imply $J_\lambda(v) < J_{R_e(v)}(v) = 0.$
	\end{proof}
	We use the next result to ensure that minimizing sequences for $c_{\mathcal{N}_\lambda ^+},$ $c_{\mathcal{N}_\lambda ^-}$ and $c_{\mathcal{N}_\lambda} $ are bounded.
	
	\begin{proposition}\label{coercive}
		If $\lambda \in (0,\lambda ^\ast),$ then $J_\lambda $ restricted to $\mathcal{N}_\lambda $ is coercive. More precisely, $J_\lambda (u) \rightarrow + \infty,$ as $\| u \|_V \rightarrow + \infty,$ with $u \in \mathcal{N}_\lambda .$
	\end{proposition}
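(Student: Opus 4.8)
The plan is to argue by contradiction. Suppose $J_\lambda|_{\mathcal{N}_\lambda}$ is not coercive, so there is a sequence $(u_k)\subset\mathcal{N}_\lambda$ with $\|u_k\|_V\to+\infty$ while $J_\lambda(u_k)\le M$ for some constant $M$. Since $\lambda\in(0,\lambda^*)$, Proposition \ref{p_evazio} gives $\mathcal{N}_\lambda^0=\emptyset$, so each $u_k$ lies in $\mathcal{N}_\lambda^+$ or $\mathcal{N}_\lambda^-$. The first step is to use the Nehari constraint $\|u\|_V^2=\lambda\|u\|_{q,a}^q+\int_{\mathbb{R}^N}b(x)f(u)u\dx$ to eliminate $\|u\|_{q,a}^q$ from $J_\lambda$, obtaining for every $u\in\mathcal{N}_\lambda$ the representation $J_\lambda(u)=-c_0\|u\|_V^2+\frac{1}{q}\int_{\mathbb{R}^N}b(x)u^2G(u)\dx$, where $c_0=\frac1q-\frac12>0$ and $G$ is the function of Lemma \ref{crucial}; recall that $G\ge0$ and $G(t)\to+\infty$ as $|t|\to\infty$.

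Next I would set $w_k=u_k/\|u_k\|_V$, so that $\|w_k\|_V=1$ and, up to a subsequence, $w_k\rightharpoonup w$ in $X$. Dividing the representation above by $\|u_k\|_V^2$ yields $J_\lambda(u_k)/\|u_k\|_V^2=-c_0+\frac1q\int_{\mathbb{R}^N}b(x)G(u_k)w_k^2\dx$. If $w\neq0$, then $|u_k(x)|=\|u_k\|_V|w_k(x)|\to+\infty$ a.e. on the positive-measure set $[w\neq0]$, whence $G(u_k)w_k^2\to+\infty$ there; since $b\ge1$ and $G\ge0$, Fatou's Lemma forces $\int_{\mathbb{R}^N}b(x)G(u_k)w_k^2\dx\to+\infty$, so $J_\lambda(u_k)/\|u_k\|_V^2\to+\infty$ and thus $J_\lambda(u_k)\to+\infty$, contradicting $J_\lambda(u_k)\le M$. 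This disposes of the case $w\neq0$; in particular it already rules out any subsequence with $u_k\in\mathcal{N}_\lambda^+$ and $w\ne0$, because there $J_\lambda(u_k)<0$ by Proposition \ref{importante_c+}.

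The delicate case, and the main obstacle, is $w=0$. Here the compact embeddings of Proposition \ref{p_imersaopeso} and Corollary \ref{c_compactA} give $\|w_k\|_{q,a}\to0$ and, for each fixed $\sigma>0$, $\int_{\mathbb{R}^N}b(x)F(\sigma w_k)\dx\to0$ via the growth bound \eqref{condicao_crescimento}; consequently $J_\lambda(\sigma w_k)\to\frac12\sigma^2$ for every $\sigma>0$. I would then split according to the sign cell. If $u_k\in\mathcal{N}_\lambda^+$, the argument establishing item ii) of the topology remark preceding Proposition \ref{compar} shows that $w_k\rightharpoonup0$ with $\|w_k\|_V=1$ forces $t^{n,+}(w_k)\to0$, so $\|u_k\|_V=t^{n,+}(w_k)\to0$, contradicting $\|u_k\|_V\to+\infty$. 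If instead $u_k\in\mathcal{N}_\lambda^-$, then $u_k=t^{n,-}(w_k)w_k$ and, by Remark \ref{obs}, $\max_{t>0}J_\lambda(tw_k)=\max\{0,J_\lambda(u_k)\}$; comparing with $J_\lambda(\sigma w_k)$ gives $\max\{0,J_\lambda(u_k)\}\ge J_\lambda(\sigma w_k)\to\frac12\sigma^2$ for every $\sigma>0$, which upon letting $\sigma\to+\infty$ forces $J_\lambda(u_k)\to+\infty$, again contradicting $J_\lambda(u_k)\le M$. These contradictions exhaust all cases and establish the coercivity. The crux is precisely this $w=0$ analysis: the leading-order identity only yields $J_\lambda(u_k)=o(\|u_k\|_V^2)$, so the contradiction must be extracted not from that expansion but from the fibering-map characterization of $J_\lambda(u_k)$ on $\mathcal{N}_\lambda^-$ together with the vanishing of the lower-order terms.
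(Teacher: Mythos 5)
Your proposal is correct and takes essentially the same route as the paper's proof: the same contradiction setup, the same normalization $w_k=u_k/\|u_k\|_V$ with the dichotomy on the weak limit $w$, the same identity $J_\lambda(u)=J_\lambda(u)-\tfrac1q J_\lambda'(u)u$ on $\mathcal{N}_\lambda$ combined with Fatou's Lemma and Lemma \ref{crucial} when $w\neq 0$, and the same vanishing-plus-fibering comparison ($J_\lambda(\sigma w_k)\to \sigma^2/2$ versus Remark \ref{obs}) when $w=0$ and $u_k\in\mathcal{N}_\lambda^-$. The only deviation is the subcase $w=0$, $u_k\in\mathcal{N}_\lambda^+$, where the paper derives the contradiction $0<d^2/2\leq 0$ from the fibering map together with Proposition \ref{importante_c+}, while you instead reuse the projection-collapse argument $t^{n,+}(w_k)\to 0$ from the remark on the topology of $\mathcal{N}_\lambda^{\pm}$ (noting $\|u_k\|_V=t^{n,+}(w_k)$) to contradict $\|u_k\|_V\to\infty$ directly --- a valid, equivalent shortcut relying on the same previously established machinery.
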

	\begin{proof}
		The proof follows arguing by contradiction. Let us assume the existence of a sequence $(v_k) \subset \mathcal{N}_\lambda ,$ such that $\|v_k\|_V \to \infty,$ as $k \to \infty,$ with $J_\lambda(v_k) \leq C,$ for some $C > 0$. Consider the sequence $w_k = v_k/\|v_k\|_V$. It follows that $\|w_k\|_V = 1$ and $w_k \rightharpoonup w$ in $X,$ up to a subsequence, for some $w \in X.$ Once again, we shall split the proof into cases. In the first one we assume that $w \neq 0$, that is, the set $[w \neq 0] = \{ x \in \mathbb{R}^N : w(x) \neq 0\}$ has positive Lebesgue measure. Therefore, $|v_k(x)| \to \infty$ a.e. in the set $[w \neq 0],$ as $k \to \infty$. On the other hand, we observe that 
		\begin{align*}
			C \geq	J_\lambda (v_k) &= J_\lambda (v_k) - \frac{1}{q} J_\lambda '(v_k) v_k \nonumber \\
			&=\left(\frac{1}{2} - \frac{1}{q}\right)\| v_k \|_V^2 + \frac{1}{q}\int _{\mathbb{R}^N} b(x) \left(f(v_k)v_k - q F(v_k) \right) \dx. 
		\end{align*}
		Consequently,
		\begin{equation*}
			\int _{\mathbb{R}^N} b(x)\left(f(v_k)v_k - q F(v_k) \right)\dx \leq C_1 + \frac{2-q}{2} \| v_k \|_V^2,
		\end{equation*}
		for some $C_1 > 0$. Now, by using \ref{f_tres} (see Lemma \ref{crucial}), the last estimate and Fatou's lemma, we deduce
		\begin{align*}
			\frac{2 -q}{2} &\geq \liminf_{k \to \infty} \int _{\mathbb{R}^N} b(x) \frac{f(v_k)v_k -q F(v_k)  }{\|v_k\|_V^2} \dx  \\
			&= \liminf_{k \to \infty} \int _{\mathbb{R}^N} b(x) \left[\frac{f(v_k)}{v_k} -q  \frac{F(v_k)}{v_k^2}\right] w_k^2 \dx  \\
			&\geq\int _{\mathbb{R}^N} b(x) \liminf_{k \to \infty} \left[\frac{f(v_k)}{v_k} -q \frac{F(v_k)}{v_k^2}\right] w_k^2 \dx  \\
			&\geq \int _{[w \neq 0]} b(x) \liminf_{k \to \infty}  \left[\frac{f(v_k)}{v_k}-q \frac{F(v_k)}{v_k^2}\right] w_k^2 \dx = + \infty .
		\end{align*}
		This contradiction proves that $w \neq 0$ is impossible. In the second case we consider $w = 0$, that is, $w_k \rightharpoonup 0$ in $X$. Hence, by using \ref{f_um} and \ref{f_dois} together with the compact embedding of $X$ into $L^p_b(\mathbb{R}^N)$ and $L^q_a(\mathbb{R}^N),$ given in Proposition \ref{p_imersaopeso}, we have
		\begin{equation}\label{ale222}
			\|w_k\|_{q,a}^q \rightarrow 0,\ \int _{\mathbb{R}^N}b(x) f(w_k)w_k dx \rightarrow 0\ \text{and}\ \int _{\mathbb{R}^N}b(x) F(dw_k)dx \rightarrow 0,\ d>0, \ \text{as} \ k \to \infty. 
		\end{equation}
		On the other hand, taking $t = d/\|v_k\|_V,$ $d > 0$, we get
		\begin{align}
			J_\lambda(t v_k) &= \frac{\|d w_k\|^2_V}{2}- \frac{\lambda}{q} \|d w_k\|_{q,a}^q - \int_{\mathbb{R}^N} b(x) F(d w_k)\dx \nonumber \\
			&= \frac{d^2}{2}- \frac{d^q\lambda}{q} \| w_k\|_{q,a}^q - \int_{\mathbb{R}^N} b(x) F(d w_k) \dx.  \label{precisa}
		\end{align}
		By \eqref{ale222} and \eqref{precisa},
		\begin{equation}\label{af}
			\lim_{k \to \infty} J_\lambda(t v_k) = d^2/ 2 > 0,\quad d > 0.	
		\end{equation}
		Nevertheless, from Proposition \ref{p_evazio}, $\mathcal{N}_\lambda ^0 =\emptyset,$ and to conclude the proof is sufficient to analyze two distinct cases: $(v_k) \subset \mathcal{N}_\lambda ^-$ or $(v_k) \subset \mathcal{N}_\lambda ^+.$ For the first case, by Proposition \ref{sinal} and Remark \ref{obs}, there exists $C > 0$ such that 
		\begin{equation*}
			J_\lambda(t v_k) \leq \max_{t > 0} J_\lambda(t v_k) = \max\left\{ 0, J_\lambda(v_k)\right\} \leq C,\ \forall \, t > 0,
		\end{equation*}
		where we used the fact that $t^{n,-}(v_k) = 1$ and $J(v_k) \leq C,$ for each $k \in \mathbb{N}$. Under these conditions, we obtain
		\begin{equation*}
			J_\lambda(tv_k)  \leq C, \quad \forall \,  t > 0,\  k \in \mathbb{N},
		\end{equation*}
		which by \eqref{af}, leads to
		\begin{equation*}
			d^2/ 2  = \lim_{k \to \infty} J_\lambda(t v_k), 
		\end{equation*}
		a contradiction, since $d > 0$ is arbitrary. For the case $(v_k) \subset \mathcal{N}_\lambda ^+,$ the same argument just above can be considered to get the following contradiction
		\begin{equation}\label{aff}
			0< d^2/2 =\lim_{k \to \infty} J_\lambda(t v_k) \leq  0,\quad \forall \, d > 0,	
		\end{equation}
		where in the last inequality Proposition \ref{importante_c+} is used. Summing up, $J_\lambda \big|_{\mathcal{N}_\lambda}$ is coercive.
	\end{proof}
	In the following results, we study minimization problems over $\mathcal{N}_\lambda^-,$ $\mathcal{N}_\lambda^+$ and $\mathcal{N}_\lambda.$
	\begin{proposition}\label{N-}
		Assume $\lambda \in (0,\lambda^\ast).$ Let $(u_k) \in \mathcal{N}_\lambda ^-$ be a minimizing sequence to the functional $J_\lambda$ on the Nehari subset $\mathcal{N}_\lambda ^-$. Then, $(u_k)$ is bounded in $X$ and there exists $u \in  \mathcal{N}_\lambda^-$ such that $u_k \to u$ in $X,$ with $u \neq 0$. In particular, $c_{\mathcal{N}_\lambda^-} = J_\lambda(u)$.
	\end{proposition}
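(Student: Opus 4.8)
The plan is to run the direct method on the constraint $\mathcal{N}_\lambda^-$, the only genuine difficulty being the passage from weak to strong convergence. First I would invoke coercivity: by Proposition \ref{coercive}, $J_\lambda$ is coercive on $\mathcal{N}_\lambda$, and since $(u_k)\subset\mathcal{N}_\lambda^-\subset\mathcal{N}_\lambda$ is minimizing we have $J_\lambda(u_k)\to c_{\mathcal{N}_\lambda^-}<+\infty$; hence $\|u_k\|_V\to\infty$ would force $J_\lambda(u_k)\to+\infty$, a contradiction, so $(u_k)$ is bounded and, up to a subsequence, $u_k\rightharpoonup u$ in $X$. By the compact embeddings $X\hookrightarrow L^\theta_b(\mathbb{R}^N)$ for $2\le\theta<2^\ast_s$ and $X\hookrightarrow L^q_a(\mathbb{R}^N)$ (Proposition \ref{p_imersaopeso} and Corollary \ref{c_compactA}) together with the growth bounds \eqref{condicao_crescimento}, all the nonlinear quantities converge, namely $\|u_k\|_{q,a}^q\to\|u\|_{q,a}^q=:A$, $\int_{\mathbb{R}^N} b(x)f(u_k)u_k\,\dx\to\int_{\mathbb{R}^N} b(x)f(u)u\,\dx=:B$, $\int_{\mathbb{R}^N} b(x)f'(u_k)u_k^2\,\dx\to\int_{\mathbb{R}^N} b(x)f'(u)u^2\,\dx=:D$, and $\int_{\mathbb{R}^N} b(x)F(u_k)\,\dx\to\int_{\mathbb{R}^N} b(x)F(u)\,\dx$.

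Next I would show $u\neq0$. Since $u_k\in\mathcal{N}_\lambda$, the Nehari identity $\|u_k\|_V^2=\lambda\|u_k\|_{q,a}^q+\int_{\mathbb{R}^N} b(x)f(u_k)u_k\,\dx$ gives $\|u_k\|_V^2\to L:=\lambda A+B$. If $u=0$ then $A=B=0$, whence $\|u_k\|_V\to0$, contradicting the uniform lower bound $\|u_k\|_V\ge c>0$ valid on $\mathcal{N}_\lambda^-$ established in the topological remarks above. Thus $u\neq0$ and, as $\lambda\in(0,\lambda^*)$, Proposition \ref{compar} furnishes the projection $t^{n,-}(u)>0$ with $t^{n,-}(u)u\in\mathcal{N}_\lambda^-$, so $J_\lambda(t^{n,-}(u)u)\ge c_{\mathcal{N}_\lambda^-}$.

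The core step is strong convergence, obtained by contradiction assuming $\|u\|_V^2<L$. I would introduce the limiting fibering $\tilde g(t):=\lim_k J_\lambda(tu_k)=\tfrac{t^2}{2}L-\tfrac{\lambda t^q}{q}A-\int_{\mathbb{R}^N} b(x)F(tu)\,\dx$ alongside the genuine fibering $g(t):=J_\lambda(tu)$; they satisfy $\tilde g(t)-g(t)=\tfrac{t^2}{2}(L-\|u\|_V^2)>0$ for $t>0$, with $\tilde g(1)=c_{\mathcal{N}_\lambda^-}$, and from the limits of the Nehari and of the $J''$ identities one gets $\tilde g'(1)=L-\lambda A-B=0$ together with $\tilde g''(1)=\lim_k J''_\lambda(u_k)(u_k,u_k)\le0$. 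Writing $\tilde R_n(t)=(t^qA)^{-1}\big[t^2L-\int_{\mathbb{R}^N} b(x)f(tu)(tu)\,\dx\big]$, one has the factorization $\tilde g'(t)=t^{q-1}A\,(\tilde R_n(t)-\lambda)$, and the very computation of Lemma \ref{l_unique} (using \ref{f_tres}) shows $\tilde R_n$ has a single maximum; hence $\tilde g$ has exactly one local minimum and one local maximum, and $\tilde g'(1)=0$ with $\tilde g''(1)\le0$ pins $t=1$ as that local maximum. Since $\tilde R_n(t)-R_n(tu)=A^{-1}t^{2-q}(L-\|u\|_V^2)>0$, at $t^{n,-}(u)$ (where $R_n(t^{n,-}(u)u)=\lambda$) we obtain $\tilde R_n(t^{n,-}(u))>\lambda$, i.e. $\tilde g'(t^{n,-}(u))>0$; thus $t^{n,-}(u)$ lies strictly left of the maximizer $t=1$ with $\tilde g$ increasing in between, so that $J_\lambda(t^{n,-}(u)u)=g(t^{n,-}(u))<\tilde g(t^{n,-}(u))<\tilde g(1)=c_{\mathcal{N}_\lambda^-}$, contradicting minimality. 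Therefore $\|u\|_V^2=L=\lim_k\|u_k\|_V^2$, which in the Hilbert space $X$ yields $u_k\to u$ strongly.

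Finally, strong convergence gives $R_n(u)=\lambda$, so $u\in\mathcal{N}_\lambda$, while $J''_\lambda(u)(u,u)=\tilde g''(1)\le0$; since $\mathcal{N}_\lambda^0=\emptyset$ for $\lambda\in(0,\lambda^*)$ by Proposition \ref{p_evazio}, this inequality is strict, whence $u\in\mathcal{N}_\lambda^-$, and continuity of $J_\lambda$ yields $J_\lambda(u)=c_{\mathcal{N}_\lambda^-}$. I expect the main obstacle to be precisely the strong-convergence step for $\lambda\in(\lambda_*,\lambda^*)$, where $c_{\mathcal{N}_\lambda^-}<0$ and the crude bound $\sup_{t>0}\tilde g(t)\le\max\{0,c_{\mathcal{N}_\lambda^-}\}=0$ is too weak; it is the fibering analysis of $\tilde g$, namely its unique maximum at $t=1$ and the strict location of $t^{n,-}(u)$ before it, that makes the argument go through in this regime.
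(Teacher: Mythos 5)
Your proposal is correct, and its skeleton matches the paper's: boundedness from Proposition \ref{coercive}, exclusion of $u=0$ via the uniform lower bound $\|u_k\|_V\geq c$ on $\mathcal{N}_\lambda^-$, and strong convergence by contradiction, showing that a drop of the norm in the weak limit would force $J_\lambda(t^{n,-}(u)u)<c_{\mathcal{N}_\lambda^-}$, impossible since $t^{n,-}(u)u\in\mathcal{N}_\lambda^-$. Where you genuinely differ is in how that strict inequality is produced. The paper works at the level of the sequence: from $R_n(tu)<\liminf_k R_n(tu_k)$ it deduces the root ordering $t^{n,+}(u_k)<t^{n,+}(u)<t^{n,-}(u)<t^{n,-}(u_k)=1$, and then uses that $t\mapsto J_\lambda(tu_k)$ is increasing on $(t^{n,+}(u_k),1)$ (Remark \ref{rmk1}) together with weak lower semicontinuity of $J_\lambda$. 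You instead pass to the limit objects $\tilde g(t)=\lim_k J_\lambda(tu_k)$ and $\tilde R_n(t)=\lim_k R_n(tu_k)$, pin $t=1$ as the local maximum of $\tilde g$ by exploiting the second-order information $J''_\lambda(u_k)(u_k,u_k)<0$, and locate $t^{n,-}(u)$ strictly to its left; the resulting chain $g(t^{n,-}(u))<\tilde g(t^{n,-}(u))<\tilde g(1)=c_{\mathcal{N}_\lambda^-}$ consists of the same inequalities as the paper's, but the middle one is justified by the fibering structure of the limit rather than by monotonicity of each $J_\lambda(\cdot\,u_k)$. Your endgame is also different and arguably more robust: you pass $J''_\lambda(u_k)(u_k,u_k)<0$ to the limit and invoke $\mathcal{N}_\lambda^0=\emptyset$ (Proposition \ref{p_evazio}) to place $u$ in $\mathcal{N}_\lambda^-$, whereas the paper appeals to the $C^1$-regularity of $u\mapsto t^{n,-}(u)$ from Proposition \ref{compar} to get $t^{n,-}(u)=\lim_k t^{n,-}(u_k)=1$.

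One local reordering is needed in your argument. The inference ``$\tilde R_n$ has a single maximum, hence $\tilde g$ has exactly one local minimum and one local maximum'' does not follow from the unique-maximum property alone: if $\sup_{t>0}\tilde R_n(t)\leq\lambda$, then $\tilde g'\leq 0$ everywhere, $\tilde g$ has no local extrema, and in the degenerate case $\sup_{t>0}\tilde R_n(t)=\lambda$ the point $t=1$ would be a stationary point of $\tilde g$ that is \emph{not} a local maximum, so your pinning step would collapse. The fix is already contained in your own next sentence: first record $\tilde R_n(t^{n,-}(u))>R_n(t^{n,-}(u)u)=\lambda$, which forces $\sup_{t>0}\tilde R_n(t)>\lambda$; only then do the two transversal roots $\tilde t^+<\tilde t^-$ of $\tilde R_n(t)=\lambda$ exist, with $\tilde g''(\tilde t^+)>0>\tilde g''(\tilde t^-)$, after which $\tilde g'(1)=0$ and $\tilde g''(1)\leq 0$ legitimately identify $t=1=\tilde t^-$. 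This is a matter of ordering the claims, not a missing idea, and with it your proof is complete.
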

	\begin{proof}
		In view of Proposition \ref{coercive}, the sequence $(u_k)$ is bounded in $X$. Hence there exists $u \in X$ such that $u_k \rightharpoonup u$ in $X,$ up to a subsequence. Now we claim that $u \neq 0$. The proof of this follows arguing by contradiction, assuming $u = 0.$ Once again we obtain 
		\begin{equation*}
			\|u_k\|_{q,a}^q \rightarrow 0 \quad\text{and}\quad \int_{\mathbb{R}^N} b(x) f(u_k)u_k \dx \rightarrow 0,\quad \text{as } k \rightarrow \infty. 
		\end{equation*}
		On the other hand,
		\begin{equation*}
			0 < c^2 \leq \lim_{k \infty} \|u_k\|^2_V = \lim_{k \to \infty} \left[ \|u_k\|_{q,a}^q + \int_{\mathbb{R}^N} b(x) f(u_k)u_k \dx \right]= 0.  
		\end{equation*}
		This contradiction proves that $u \neq 0$. It remains to prove that $u_k \to u$ in $X$. We use a contradiction argument again, assuming $\|u\|_V < \liminf_{k \to \infty} \|u_k\|_V$. Recalling Proposition \ref{compar}, there exists a unique $t^{n,-}(u) > 0$ such that $t^{n,-}(u) u \in \mathcal{N}_\lambda ^-$. Furthermore, because the functional $u \mapsto R_n(u)$ is also weakly lower semicontinous, $R_n(tu) < \liminf_{k \to \infty} R_n(tu_k)$ holds, for each $t > 0$. Hence $R_n(tu) < R_n(tu_k)$ is satisfied for each $k$ large enough, up to a subsequence. The last assertion implies
		\begin{equation*}
			0 < t^{n,+}(u_k) < t^{n,+}(u) < t^{n,-}(u) < t^{n,-}(u_k),
		\end{equation*}
		for $k$ big enough. Nevertheless, $R_n (t u_k) >\lambda$, for any $t \in (t^{n,+}(u_k), t^{n,-}(u_k)),$ and we can infer from Remark \ref{rmk1} that the function $t \mapsto J_\lambda(tu_k)$ is increasing in the set $(t^{n,+}(u_k), t^{n,-}(u_k))$. Under those conditions, since $u \mapsto J_\lambda (u)$ is weakly lower semicontinous, we have a contradiction:
		\begin{equation*}
			c_{\mathcal{N}_\lambda ^-} \leq J_\lambda(t^{n,-}(u)u) < \liminf_{k \to \infty} J_\lambda(t^{n,-}(u)u_k) \leq \liminf_{k \to \infty} J_\lambda(t^{n,-}(u_k)u_k) = c_{\mathcal{N}_\lambda ^-},
		\end{equation*}
		where we used the fact that $J_\lambda ''(u_k)(u_k,u_k)<0$ together with Proposition \ref{compar}, to get $t^{n,-}(u_k)=1.$ Now since $X$ is a Hilbert space, the convergence $\|u_k\|_V \rightarrow \| u \|_V$ implies in $u_k \rightarrow u$ in $X.$ Particularly, $c_{\mathcal{N}_\lambda ^-} = J_\lambda(u).$ On the other hand, by Proposition \ref{compar}, $t^{n,-} : X \setminus \{0\} \to \mathbb{R}$ is in $C^1$ class and so $t^{n,-}(u) = 1$. Thus $u = t^{n,-}(u) u \in \mathcal{N}_\lambda ^{-}.$
	\end{proof}
	\begin{proposition}\label{N+}
		Suppose $\lambda \in (0, \lambda^*)$. Let $(v_k) \subset \mathcal{N}_\lambda ^+$ be a minimizing sequence to the functional $J_\lambda$ on the Nehari subset $\mathcal{N}_\lambda ^+$. Then $(v_k)$ is bounded in $X$ and there exist $v \in \mathcal{N}_\lambda ^+$ such that $v_k \to v$ in $X,$ with $v \neq 0$. In particular, $J_\lambda(v) = c_{\mathcal{N}_\lambda ^+}   < 0$.
	\end{proposition}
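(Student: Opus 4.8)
The plan is to follow the proof of Proposition \ref{N-} almost verbatim, changing only the two places where $\mathcal{N}_\lambda^+$ behaves differently from $\mathcal{N}_\lambda^-$: the nontriviality of the weak limit and the direction in which the fibering map is monotone. First I would invoke Proposition \ref{coercive}: since $J_\lambda$ is coercive on $\mathcal{N}_\lambda$ and $(v_k) \subset \mathcal{N}_\lambda^+ \subset \mathcal{N}_\lambda$ is a minimizing sequence (so $J_\lambda(v_k)$ is bounded), the sequence $(v_k)$ is bounded in $X$. Passing to a subsequence, $v_k \rightharpoonup v$ in $X$, and by the compact embeddings of Proposition \ref{p_imersaopeso} and Corollary \ref{c_compactA} one has $\|v_k\|_{q,a}^q \to \|v\|_{q,a}^q$, $\int_{\mathbb{R}^N} b(x)F(v_k)\dx \to \int_{\mathbb{R}^N} b(x)F(v)\dx$ and $\int_{\mathbb{R}^N} b(x)f(v_k)v_k\dx \to \int_{\mathbb{R}^N} b(x)f(v)v\dx$.

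Next I would show $v \neq 0$. Here the argument of Proposition \ref{N-}, which used the lower bound $\|u\|_V \geq c$ valid only on $\mathcal{N}_\lambda^-$, must be replaced, since in contrast $\overline{\mathcal{N}_\lambda^+} = \mathcal{N}_\lambda^+ \cup \{0\}$ and no such bound is available. Instead I would use Proposition \ref{importante_c+}, which gives $c_{\mathcal{N}_\lambda^+} < 0$: if $v = 0$, the weak continuity of the last two terms of $J_\lambda$ forces $\liminf_k J_\lambda(v_k) \geq \liminf_k \tfrac12\|v_k\|_V^2 \geq 0$, contradicting $J_\lambda(v_k) \to c_{\mathcal{N}_\lambda^+} < 0$.

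The main obstacle is the strong convergence, where the sign bookkeeping of the fibering map must be handled with care. Arguing by contradiction, assume it fails; then along a subsequence $\|v\|_V < \lim_k \|v_k\|_V$. As in Proposition \ref{N-}, weak lower semicontinuity of $R_n$ (strict because of the norm gap) gives $R_n(tv) < R_n(tv_k)$ for every $t>0$ and all large $k$; evaluating at $t = t^{n,+}(v)$ and recalling $t^{n,+}(v_k) = 1$ yields the strict ordering $1 < t^{n,+}(v)$. The crucial sign change from the $\mathcal{N}_\lambda^-$ case enters now: by Proposition \ref{compar}, $t^{n,+}(v)$ is a local minimum of $t\mapsto J_\lambda(tv)$, so this map is decreasing on $(0, t^{n,+}(v))$ and hence $J_\lambda(v) > J_\lambda(t^{n,+}(v)v)$. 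Since $t^{n,+}(v)v \in \mathcal{N}_\lambda^+$ we have $J_\lambda(t^{n,+}(v)v) \geq c_{\mathcal{N}_\lambda^+}$, while weak lower semicontinuity of $J_\lambda$ gives $J_\lambda(v) \leq \liminf_k J_\lambda(v_k) = c_{\mathcal{N}_\lambda^+}$. Combining these, $c_{\mathcal{N}_\lambda^+} \leq J_\lambda(t^{n,+}(v)v) < J_\lambda(v) \leq c_{\mathcal{N}_\lambda^+}$, a contradiction. Therefore $\|v_k\|_V \to \|v\|_V$, and since $X$ is a Hilbert space this upgrades $v_k \rightharpoonup v$ to $v_k \to v$ in $X$. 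Finally, from $v \neq 0$ together with $\overline{\mathcal{N}_\lambda^+} = \mathcal{N}_\lambda^+ \cup \{0\}$ I conclude $v \in \mathcal{N}_\lambda^+$, and continuity of $J_\lambda$ gives $J_\lambda(v) = c_{\mathcal{N}_\lambda^+} < 0$.
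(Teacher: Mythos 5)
Your proposal is correct and follows essentially the same route as the paper's proof: coercivity of $J_\lambda$ on $\mathcal{N}_\lambda$ (Proposition \ref{coercive}) for boundedness, Proposition \ref{importante_c+} to exclude a trivial weak limit, and a contradiction argument via strict weak lower semicontinuity of $R_n$ along the fibering map to upgrade to strong convergence. Your only deviations are cosmetic streamlinings — you get $1 < t^{n,+}(v)$ directly by evaluating the Rayleigh-quotient inequality at $t = t^{n,+}(v)$, avoiding the paper's two-case split based on $J'_\lambda(v)v < 0$, and you conclude $v \in \mathcal{N}_\lambda^+$ from $\overline{\mathcal{N}_\lambda^+} = \mathcal{N}_\lambda^+ \cup \{0\}$ instead of from the $C^1$-continuity of $u \mapsto t^{n,+}(u)$ used in Proposition \ref{compar} — but the underlying mechanism is identical.
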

	\begin{proof}
		Using Proposition \ref{coercive}, the sequence $(v_k)$ is a bounded. Up to a subsequence, there exists $v \in X$ such that $v_k \rightharpoonup v$ in $X.$ Once again we claim: $v \neq 0$. Otherwise, $v_k \rightharpoonup  0$ in $X$ and by Proposition \ref{p_imersaopeso},
		\begin{equation*}
			\|v_k\|_{q,a}^q \rightarrow 0\quad\text{and}\quad \int_{\mathbb{R}^N} b(x) f(v_k)v_k \dx \rightarrow 0,\quad \text{as }k \rightarrow \infty.
		\end{equation*}
		As a consequence, $(v_k)\subset \mathcal{N}^+_\lambda $ implies $\|v_k\|_V \to 0.$ However, $c_{\mathcal{N}_\lambda ^+} = \lim_{k \to \infty} J_\lambda(v_k) = 0,$ a contradiction with Proposition \ref{importante_c+}.
		
		We are going to prove that $v_k \rightarrow v$ in $X$. The proof follows arguing by contradiction, supposing $\|v\|_V < \liminf_{k \to \infty} \|v_k\|_V$. Clearly, under these conditions, $R_n(t v) < \liminf_{k \to \infty} R_n(t v_k)$ for any $t > 0$. Nevertheless, by Proposition \ref{compar}, there are $0 < t^{n,+}(v) < t^{n,-}(v) < \infty$ such that $t^{n,+}(v) v \in \mathcal{N}_\lambda ^+$. Next we use a similar argument as made in Proposition \ref{N-}. In fact, we have
		\begin{equation}\label{re}
			0 < t^{n,+}(v_k) < t^{n,+}(v) < t^{n,-}(v) < t^{n,-}(v_k) ,
		\end{equation}
		for $k \in \mathbb{N}$ large enough. The functional $u \mapsto J'_\lambda(u)u$ is weakly lower semicontinuous, hence, $J'_\lambda(v)v < \liminf_{k \to \infty} J'_\lambda(v_k) v_k = 0$. By Remark \ref{rmk1}, $J'_\lambda(v)v <0$ implies $R_n(v) < \lambda$ and so $1 \in (0, t^{n,+}(v)) \cup (t^{n,-}(v), \infty)$. Assume $1 \in (0, t^{n,+}(v)).$ Remark \ref{rmk1} also indicates that $t \mapsto J_\lambda (tv)$ is decreasing in $(0,t^{n,+}(v)) \cup (t^{n,-}(v) ,  \infty ).$ Therefore, since $u \mapsto J_\lambda(u)$ is also weakly lower semicontinuous,
		\begin{equation*}
			c_{\mathcal{N}_\lambda ^{+}} \leq J_\lambda(t^{n,+}(v) v) \leq J_\lambda(v) < \liminf_{k \to \infty} J_\lambda(v_k) = c_{\mathcal{N}_\lambda ^+}, 
		\end{equation*}
		a contradiction. Now suppose $1 \in (t^{n,-}(v), \infty)$. Likewise $t^{n,+}(v_k) = 1.$ Thus, in view of \eqref{re} we have $1 = t^{n,+}(v_k) < t^{n,-}(v) < 1,$ a contradiction again. Summing up, $v_k \rightarrow v$ in $X$ and $c_{\mathcal{N}^+_\lambda } = J_\lambda(v)$. In particular, we have $t^{n,+}(v) = \lim_{k \to \infty} t^{n,+}(v_k) = 1$ and by Proposition \ref{compar}, $v =  t^{n,+}(v) v \in \mathcal{N}_\lambda ^+$.
	\end{proof}
	
	\begin{proposition}\label{Nehari}
		Assume $\lambda \in (0, \lambda^*)$. Let
		\begin{equation*}
			c_{\mathcal{N} _\lambda }= \inf_{w \in \mathcal{N} _\lambda } J_\lambda (w).
		\end{equation*}
		If $(u_k) \subset \mathcal{N}_\lambda$ is such that $J_\lambda (u_k) \rightarrow c_{\mathcal{N} _\lambda } ,$ then $(u_k)$ is bounded in $X$ and there exists $u \in \mathcal{N}_\lambda$ such that $u_k \rightarrow u$ in $X,$ with $u \neq 0.$ In particular, $J_\lambda(u) = c_{\mathcal{N}_\lambda}$.
	\end{proposition}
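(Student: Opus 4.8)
The plan is to reduce the statement to the two minimization results already established over the Nehari subsets $\mathcal{N}_\lambda^+$ and $\mathcal{N}_\lambda^-$. First I would record that, since $\lambda \in (0,\lambda^*)$, Proposition \ref{p_evazio} gives $\mathcal{N}_\lambda^0 = \emptyset$, so that $\mathcal{N}_\lambda = \mathcal{N}_\lambda^+ \cup \mathcal{N}_\lambda^-$ is a disjoint decomposition and consequently $c_{\mathcal{N}_\lambda} = \min\{c_{\mathcal{N}_\lambda^+}, c_{\mathcal{N}_\lambda^-}\}$. Boundedness of $(u_k)$ is immediate from Proposition \ref{coercive}, since $J_\lambda$ restricted to $\mathcal{N}_\lambda$ is coercive while the sequence $(J_\lambda(u_k))$ is convergent, hence bounded; thus, up to a subsequence, $u_k \rightharpoonup u$ in $X$.

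Next I would invoke a dichotomy on the membership of the $u_k$. Because each $u_k$ belongs to exactly one of the disjoint sets $\mathcal{N}_\lambda^+$ or $\mathcal{N}_\lambda^-$, at least one of these sets contains $u_k$ for infinitely many indices $k$; passing to that subsequence I may assume that either $(u_k) \subset \mathcal{N}_\lambda^+$ for all $k$, or $(u_k) \subset \mathcal{N}_\lambda^-$ for all $k$.

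In the first case the sandwich $c_{\mathcal{N}_\lambda} \leq c_{\mathcal{N}_\lambda^+} \leq J_\lambda(u_k) \to c_{\mathcal{N}_\lambda}$ forces $c_{\mathcal{N}_\lambda} = c_{\mathcal{N}_\lambda^+}$, so that $(u_k)$ is a minimizing sequence for $J_\lambda$ on $\mathcal{N}_\lambda^+$; Proposition \ref{N+} then yields $u \in \mathcal{N}_\lambda^+$ with $u_k \to u$ in $X$, $u \neq 0$ and $J_\lambda(u) = c_{\mathcal{N}_\lambda^+} = c_{\mathcal{N}_\lambda}$. The second case is entirely analogous: the same sandwich gives $c_{\mathcal{N}_\lambda} = c_{\mathcal{N}_\lambda^-}$, and Proposition \ref{N-} furnishes $u \in \mathcal{N}_\lambda^-$ with $u_k \to u$ in $X$, $u \neq 0$ and $J_\lambda(u) = c_{\mathcal{N}_\lambda}$. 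In either situation $u \in \mathcal{N}_\lambda$ attains $c_{\mathcal{N}_\lambda}$, which completes the proof.

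I do not expect a serious obstacle here, as the heavy analytic work has already been carried out in Propositions \ref{N+} and \ref{N-}: the coercivity estimate of Proposition \ref{coercive}, the use of the compact embeddings of Proposition \ref{p_imersaopeso} and Corollary \ref{c_compactA} to rule out $u = 0$, and the projection argument via Proposition \ref{compar} that upgrades weak convergence to strong convergence. The only point that requires care is the verification that the extracted subsequence is genuinely minimizing for the corresponding restricted functional, which is exactly what the sandwich inequality provides; the disjointness $\mathcal{N}_\lambda = \mathcal{N}_\lambda^+ \cup \mathcal{N}_\lambda^-$ coming from $\mathcal{N}_\lambda^0 = \emptyset$ is what makes the dichotomy clean.
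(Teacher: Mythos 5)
Your proof is correct, and it reaches the conclusion by a genuinely cleaner route than the paper's. The paper does not homogenize the minimizing sequence: it re-runs, inline on the mixed sequence $(u_k)\subset\mathcal{N}_\lambda$, the strong-convergence contradiction argument from Propositions \ref{N-} and \ref{N+} --- extracting a weak limit $u\neq 0$, assuming $\|u\|_V<\liminf_{k\to\infty}\|u_k\|_V$, locating $1\in(0,t^{n,+}(u))\cup(t^{n,-}(u),\infty)$, and deriving a contradiction in each case from the inequalities $c_{\mathcal{N}_\lambda}\leq c_{\mathcal{N}_\lambda^+}$ and $c_{\mathcal{N}_\lambda}\leq c_{\mathcal{N}_\lambda^-}$ (in the second case it first rules out $t^{n,+}(u_k)=1$, so that every $u_k$ lies in $\mathcal{N}_\lambda^-$). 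Your pigeonhole-plus-sandwich argument instead upgrades a subsequence of $(u_k)$ to a genuine minimizing sequence for one of the two restricted problems and then invokes Propositions \ref{N+} and \ref{N-} as black boxes; this avoids repeating their internal analysis, and there is no circularity, since those propositions (as well as Propositions \ref{p_evazio} and \ref{coercive}) are established before this statement. What the paper's inline version buys is that it never needs to pass to a subsequence lying in a single Nehari subset; what yours buys is modularity and a visibly shorter argument. Two harmless caveats: your case $(u_k)\subset\mathcal{N}_\lambda^-$ is in fact vacuous, because Proposition \ref{sacada} (proved independently of this statement) gives $c_{\mathcal{N}_\lambda}\leq c_{\mathcal{N}_\lambda^+}<c_{\mathcal{N}_\lambda^-}$, which is incompatible with the identity $c_{\mathcal{N}_\lambda}=c_{\mathcal{N}_\lambda^-}$ that your sandwich produces there; still, handling that case via Proposition \ref{N-} as you do is logically sound, so nothing breaks. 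Also, as in the paper itself, the convergence $u_k\to u$ is really obtained only up to a subsequence (the propositions you cite extract subsequences in their proofs), which is the intended reading of the statement.
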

	\begin{proof}
		It follows by using the same arguments of Propositions \ref{N-} and \ref{N+} together with the fact that $c_{\mathcal{N}  _\lambda } \leq c_{\mathcal{N}_\lambda ^+}$ and $c_{\mathcal{N} _\lambda } \leq c_{\mathcal{N}_\lambda ^-}. $ Indeed, in the proof of Proposition \ref{N+}, one can see that $u_k \rightharpoonup u$ in $X,$ up to a subsequence, with $u \neq 0.$ Following the same lines, $1 \in (0, t^{n,+}(u)) \cup (t^{n,-}(u), \infty)$. If $1 \in (0, t^{n,+}(u)),$ then
		\begin{equation*}
			c_{\mathcal{N}_\lambda} \leq c_{\mathcal{N}_\lambda ^{+}} \leq J_\lambda(t^{n,+}(u) u) \leq J_\lambda(u) < \liminf_{k \to \infty} J_\lambda(u_k) = c_{\mathcal{N}_\lambda }, 
		\end{equation*}
		a contradiction. Now consider $1 \in (t^{n,-}(u), \infty)$. If there is a $t^{n,+}(u_k) = 1, $ then the inequality $t^{n,+}(u_k) < t^{n,+}(u) < t^{n,-}(u) < t^{n,-}(u_k)$ leads to a contradiction $1=t^{n,+}(u_k) < t^{n,+}(u) <1.$ Hence $t^{n,-}(u_k) = 1,$ for all $k.$ In this case, taking into account the proof of Proposition \ref{N-}, we have
		\begin{equation*}
			c_{\mathcal{N}_\lambda} \leq c_{\mathcal{N}_\lambda ^-} \leq J_\lambda(t^{n,-}(u)u) < \liminf_{k \to \infty} J_\lambda(t^{n,-}(u)u_k) \leq \liminf_{k \to \infty} J_\lambda(t^{n,-}(u_k)u_k) = c_{\mathcal{N}_\lambda },
		\end{equation*}
		a contradiction. Thus $u_k \rightarrow u$ in $X$ and $J_\lambda(u) = c_{\mathcal{N}_\lambda}.$
	\end{proof}
	Now the sign of $J_\lambda (u)$ can be described for minimizers $u \in \mathcal{N}_\lambda ^-$ of $c_{\mathcal{N}_\lambda ^-}.$
	\begin{proposition}\label{p_sinal}
		Consider $\lambda \in (0, \lambda^*)$. Let $u \in \mathcal{N}_\lambda ^-$ be a minimizer on the Nehari subset $\mathcal{N}_\lambda ^-$ obtained in Proposition \ref{N-}.
		\begin{enumerate}[label=\roman*)]
			\item If $\lambda \in (0, \lambda_*),$ then $J_\lambda(u) > 0;$
			\item $J_\lambda(u) = 0,$ for $\lambda =\lambda_*$;
			\item $J_\lambda(u) < 0,$ for each $\lambda \in (\lambda_*, \lambda^*)$;
		\end{enumerate}
	\end{proposition}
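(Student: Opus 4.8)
The plan is to reduce everything to the sign information already encoded in Proposition~\ref{sinal}. The key preliminary observation is that the minimizer $u\in\mathcal{N}_\lambda^-$ produced in Proposition~\ref{N-} satisfies $t^{n,-}(u)=1$: by Proposition~\ref{compar} the equation $R_n(tu)=\lambda$ has exactly the two roots $t^{n,+}(u)<t^{n,-}(u)$, with $t^{n,-}(u)u$ the one lying in $\mathcal{N}_\lambda^-$, and since $u$ itself already solves $R_n(u)=\lambda$ and belongs to $\mathcal{N}_\lambda^-$, the root in $\mathcal{N}_\lambda^-$ must be $t=1$. Hence $J_\lambda(u)=J_\lambda\bigl(t^{n,-}(u)u\bigr)=\max_{t>0}J_\lambda(tu)$, so the sign of $J_\lambda(u)$ is exactly the one described in Proposition~\ref{sinal}, which I would read off case by case, using Remark~\ref{cl} (the equivalence between the sign of $R_e(\cdot)-\lambda$ and the sign of $J_\lambda(\cdot)$) whenever a direct comparison is convenient.

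For item i), with $\lambda\in(0,\lambda_*)$, Proposition~\ref{sinal}(i) gives $\max_{t>0}J_\lambda(tu)=J_\lambda(t^{n,-}(u)u)>0$; since $t^{n,-}(u)=1$, this is precisely $J_\lambda(u)>0$. For item ii), with $\lambda=\lambda_*$, Proposition~\ref{sinal}(ii) gives $J_\lambda(t^{n,-}(u)u)=0$, hence $J_\lambda(u)=0$. Both follow immediately from the reduction above.

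The main obstacle is item iii), $\lambda\in(\lambda_*,\lambda^*)$, where Proposition~\ref{sinal}(iii) does not settle the sign on its own: it branches into the subcases a)--c) according to whether $\Lambda_e(u)$ exceeds, equals, or is smaller than $\lambda$, and for an arbitrary element of $\mathcal{N}_\lambda^-$ all three can occur, so I have no a priori control of $\Lambda_e(u)$ for the minimizer. My plan is to sidestep this trichotomy and argue through the minimization value itself: it is enough to exhibit one competitor $v\in\mathcal{N}_\lambda^-$ with $J_\lambda(v)<0$, for then $c_{\mathcal{N}_\lambda^-}\le J_\lambda(v)<0$, and since $u$ attains $c_{\mathcal{N}_\lambda^-}$ (Proposition~\ref{N-}) we conclude $J_\lambda(u)=c_{\mathcal{N}_\lambda^-}<0$.

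To build such a competitor I would take $w\in X\setminus\{0\}$ attaining $\Lambda_e(w)=\lambda_*$ (Proposition~\ref{ed2}) and project it onto the Nehari set via $v:=t^{n,-}(w)w$, which is admissible because $\lambda<\lambda^*$ (Proposition~\ref{compar}), so that $v\in\mathcal{N}_\lambda^-$ and $R_n(v)=\lambda$. Since $\Lambda_e(w)=\max_{t>0}R_e(tw)$ dominates $R_e(t^{n,-}(w)w)$, one gets $R_e(v)\le\Lambda_e(w)=\lambda_*<\lambda$, and the strict inequality $R_e(v)<\lambda$ forces $J_\lambda(v)<0$ by Remark~\ref{cl}. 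This is exactly the construction already recorded in the Remark preceding Lemma~\ref{l_usar3}, and it certifies $c_{\mathcal{N}_\lambda^-}<0$, completing item iii). The only genuinely nontrivial point is this last step, namely producing a negative-energy element of $\mathcal{N}_\lambda^-$ without first determining $\Lambda_e(u)$ for the minimizer.
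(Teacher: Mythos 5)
Your proposal is correct and follows essentially the same route as the paper: items i) and ii) via the identification $t^{n,-}(u)=1$ together with Proposition \ref{sinal}, and item iii) by projecting a minimizer $w$ of $\Lambda_e$ (so that $\Lambda_e(w)=\lambda_*$, Proposition \ref{ed2}) onto $\mathcal{N}_\lambda^-$ to obtain a competitor with negative energy, which forces $c_{\mathcal{N}_\lambda^-}<0$. The only difference is cosmetic: you bound $R_e\bigl(t^{n,-}(w)w\bigr)\le\Lambda_e(w)=\lambda_*<\lambda$ directly and invoke Remark \ref{cl}, whereas the paper first identifies $t_e(w)$ with the $\lambda_*$-level projection before reaching the same strict inequality $J_\lambda\bigl(t^{n,-}(w)w\bigr)<0$.
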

	\begin{proof}
		\textit{i):} Since $u \in \mathcal{N}_\lambda^-,$ by Proposition \ref{compar}, we have $t^{n,-}(u) = 1$. The proof of \textit{i)} and \textit{ii)} follows by Proposition \ref{sinal}--\textit{i) and \textit{ii)}}, respectively.
		
		\textit{iii):} Let $w \in X \setminus \{0\}$ such that $\lambda_* = \Lambda_e(w) = R_e(t_e(w)w).$ The equation $R_n(tw) = \lambda _\ast = R_e(tw)$ has a unique solution $t = t_e(w),$ and we can write $R_n(t_e(w) w) = R_n(t^{n,-}(w)w) = R_n(t^{n,+}(w)w) = \lambda _\ast,$ where $t^{n,-}(w)$ and $t^{n,+}(w)$ are given by taking $\lambda = \lambda _\ast$ in Proposition \ref{compar}. Since $t^{n,+}(w) < t_n(w)<t_e(w)$ and $t \mapsto R_n(tw)$ is a injection, we get that $t_e(w) = t^{n,-}(w).$ Consequently, because $\lambda > \lambda_*,$
		\begin{equation*}
			J_\lambda (u) = c_{\mathcal{N} ^- _\lambda } \leq J_\lambda (t^{n,-}(w) w ) < J_{R_e(t_e(w)w)} (t_e(w) w ) = 0.
		\end{equation*}
		This finishes the proof.
	\end{proof}
	Using the fact that $c_{\mathcal{N}_\lambda^-}$ is attained, we can compare it with $c_{\mathcal{N}^+_\lambda}.$
	\begin{proposition}\label{sacada}
		$c_{\mathcal{N}^+_\lambda} < c_{\mathcal{N}^-_\lambda},$ whenever $\lambda \in (0, \lambda^*)$.
	\end{proposition}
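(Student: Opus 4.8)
The plan is to exploit that $c_{\mathcal{N}_\lambda^-}$ is attained (Proposition \ref{N-}) and to compare it with the value of $J_\lambda$ at the lower projection of the minimizer onto $\mathcal{N}_\lambda^+$ along its own ray, where the fibering-map structure of Proposition \ref{compar} does all the work.

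First I would pick $u \in \mathcal{N}_\lambda^-$ realizing the infimum, i.e.\ $J_\lambda(u) = c_{\mathcal{N}_\lambda^-}$, whose existence is guaranteed by Proposition \ref{N-}. Since $\lambda \in (0,\lambda^*)$ and $u \in X \setminus \{0\}$, Proposition \ref{compar} applies: the equation $R_n(tu) = \lambda$ has exactly two roots $0 < t^{n,+}(u) < t_n(u) < t^{n,-}(u)$, with $t^{n,+}(u)u \in \mathcal{N}_\lambda^+$ and $t^{n,-}(u)u \in \mathcal{N}_\lambda^-$. Because $u$ itself lies on $\mathcal{N}_\lambda^-$, the uniqueness part of Proposition \ref{compar} forces $t^{n,-}(u) = 1$.

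Next I would examine the fibering map $\gamma(t) = J_\lambda(tu)$. By Proposition \ref{compar}, its only critical points on $(0,\infty)$ are $t^{n,+}(u)$, a local minimum, and $t^{n,-}(u)=1$, a local maximum. Hence $\gamma'$ does not vanish anywhere on the open interval $\left( t^{n,+}(u), 1 \right)$, so $\gamma$ is strictly monotone there; since $t^{n,+}(u)$ is a local minimum, $\gamma$ must be strictly increasing on $\left( t^{n,+}(u), 1 \right)$. The strictness is legitimate precisely because $t^{n,+}(u)$ and $t^{n,-}(u)$ are genuinely distinct, being separated by $t_n(u)$. This yields
\begin{equation*}
	J_\lambda\!\left( t^{n,+}(u)\,u \right) < J_\lambda\!\left( t^{n,-}(u)\,u \right) = J_\lambda(u) = c_{\mathcal{N}_\lambda^-}.
\end{equation*}

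Finally, since $t^{n,+}(u)u \in \mathcal{N}_\lambda^+$, the definition of the infimum over $\mathcal{N}_\lambda^+$ gives $c_{\mathcal{N}_\lambda^+} \leq J_\lambda\!\left( t^{n,+}(u)\,u \right) < c_{\mathcal{N}_\lambda^-}$, which is the claim. I do not anticipate any analytical obstacle: all the compactness and attainment difficulties have already been absorbed into Proposition \ref{N-}, and the only delicate point is justifying the strict inequality, which is secured by the fact that the two projections are distinct and are the unique critical points of $\gamma$, forcing strict monotonicity of the fibering map between them.
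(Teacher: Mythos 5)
Your proposal is correct and follows essentially the same route as the paper: take the minimizer $u \in \mathcal{N}_\lambda^-$ from Proposition \ref{N-}, note $t^{n,-}(u)=1$, project down to $t^{n,+}(u)u \in \mathcal{N}_\lambda^+$, and use strict monotonicity of $t \mapsto J_\lambda(tu)$ on $\bigl(t^{n,+}(u), t^{n,-}(u)\bigr)$ to conclude $c_{\mathcal{N}_\lambda^+} \leq J_\lambda\bigl(t^{n,+}(u)u\bigr) < J_\lambda(u) = c_{\mathcal{N}_\lambda^-}$. The only cosmetic difference is how the monotonicity is justified — the paper invokes Remark \ref{rmk1} (the sign of $R_n(tu)-\lambda$ versus $J'_\lambda(tu)tu$), while you argue via the absence of critical points between the two roots together with the local-minimum character of $t^{n,+}(u)$ — but both rest on Proposition \ref{compar} and are equivalent in substance.
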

	\begin{proof}
		Proposition \ref{N-}, guarantees the existence of $v \in \mathcal{N}_\lambda^-$ with $c_{\mathcal{N}_\lambda^-} = J_\lambda(v).$ Because $t^{n,-}(v) =1,$ we have
		\begin{equation*}
			c_{\mathcal{N}_\lambda^+} \leq J_\lambda (t^{n,+}(v)v) < J_\lambda (t^{n,-}(v)v) = J_\lambda (v) = c_{\mathcal{N}_\lambda ^-},
		\end{equation*}
		where in the last inequality we used the fact that $t \mapsto J(tu)$ is increasing on $(t^{n,+}(u) , t^{n,-}(u))$ (see Remark \ref{rmk1} and Proposition \ref{compar}).
	\end{proof}
	\begin{remark}Let
		\begin{equation}\label{minimal}
			c_\lambda = \inf \left\lbrace J_\lambda (w) : w \in X\setminus \{0\}\text{ and }J'_\lambda(w)=0 \right\rbrace,
		\end{equation}
		the so called ground state level. According to Proposition \ref{sacada},
		\begin{equation*}
			J_\lambda(v) \geq  c_{\mathcal{N}_\lambda^-} > c_{\mathcal{N}_\lambda^+}  \geq  c_\lambda,\quad \forall \, v \in \mathcal{N}_\lambda ^-.
		\end{equation*}
		Therefore, there is no ground states of Problem \eqref{P} in $\mathcal{N}_\lambda ^-$ (see the proof of Theorem \ref{th1}).
	\end{remark}
	\section{Proof of our main results}\label{s_main}
	\begin{proof}[Proof of Theorem \ref{th1} completed] In view of Proposition \ref{Nehari} there is $u_0 \in \mathcal{N}_\lambda $ such that $c_{\mathcal{N}_\lambda } = J_\lambda(u_0) .$ If $u_0 \in \mathcal{N}_\lambda ^-,$ by Proposition \ref{sacada}, we have
		\begin{equation*}
			J_\lambda (u_0) = c_{\mathcal{N}_\lambda} \leq c_{\mathcal{N}^+_\lambda} < c_{\mathcal{N}^-_\lambda} \leq J_\lambda (u_0),
		\end{equation*}
		which is a contradiction. Because $\mathcal{N}_\lambda ^0 = \emptyset$ (see Proposition \ref{p_evazio}), we conclude that $u_0 \in \mathcal{N}_\lambda ^+.$ Consequently,
		\begin{equation*}
			J_\lambda (u_0) = c_{\mathcal{N}_\lambda } \leq c _{\mathcal{N}_\lambda ^+} \leq J_\lambda (u_0),
		\end{equation*}
		that is, $c_{\mathcal{N}_\lambda ^+} = c_{\mathcal{N}_\lambda } = J_\lambda (u_0).$ We can apply Lagrange multiplier theorem, to get some $\mu \in \mathbb{R}$ with $J'_\lambda (u_0)u_0 = \mu J''_\lambda (u_0)(u_0,u_0).$ Once again, since $\mathcal{N}_\lambda ^0 = \emptyset,$ we have $\mu = 0 $ and $u_0$ is a critical point of $J_\lambda.$ Thus $c_{\mathcal{N}_\lambda} \leq c_\lambda \leq J_\lambda (u_0) = c_{\mathcal{N}_\lambda},$ where $c_\lambda$ is given in \eqref{minimal}. In this case, we obtain 
		\begin{equation*}
			J_\lambda (u_0) = c_\lambda =c_{\mathcal{N}_\lambda}=c_{\mathcal{N}^+_\lambda}<0.	\qedhere
		\end{equation*}
	\end{proof}
	\begin{proof}[Proof of Theorem \ref{th2} completed]
		According to Proposition \ref{N-} there exists $v_0 \in \mathcal{N}_\lambda ^-$ such that 
		\begin{equation*}
			c_{\mathcal{N}_\lambda ^-}  = \inf_{w \in \mathcal{N}_\lambda ^-} J_\lambda(w)= J_\lambda(v_0).
		\end{equation*}
		Applying the Lagrange multiplier theorem we obtain the existence $\mu \in \mathbb{R}$ such that $J'_\lambda (v_0)v_0 = \mu J_\lambda '' (v_0)(v_0,v_0).$ Like above, we have $\mu =0$ and that $v_0$ is a weak solution of Problem \eqref{P}. Now the desired result follows immediately from Proposition \ref{p_sinal}. 
	\end{proof}
	\begin{proof}[Proof of Corollary \ref{coro} completed]
		The proof of Corollary \ref{coro} follows directly from Theorem \ref{th1}, Theorem \ref{th2} and the fact that $\mathcal{N}_\lambda^- \cap \mathcal{N}_\lambda^+ = \emptyset$.
	\end{proof}
	
	\section*{Acknowledgements}
	The authors declare that they have no conflict of interest. The second author was also partially supported by CNPq with grant 309026/2020-2. 
	
	%

\end{document}